\newtheorem{Proposition}{Proposition}[section]
\newtheorem{Corollary}{Corollary}[section]
\newtheorem{Lemma}{Lemma}[section]
\newtheorem{Theorem}{Theorem}[section]
 \newtheorem*{theorem*}{Theorem}
\theoremstyle{definition}
\newtheorem{Definition}{Definition}[section]
\theoremstyle{definition}
\newtheorem{Remark}{Remark}
\begin{document}

\title{Uniform Distribution of Prime Powers and sets of Recurrence and van der Corput
  sets in $\mathbb{Z}^k$} 
\author[V. Bergelson]{Vitaly Bergelson}
\thanks{The first author gratefully acknowledges the support of the NSF under grant DMS-1162073.}
\address[V. Bergelson]{Department of Mathematics\\ Ohio State University \\ Columbus, Ohio 43210, USA}
\email{vitaly@math.ohio-state.edu}

\author[G. Kolesnik]{Grigori Kolesnik}
\address[G. Kolesnik]{Department of Mathematics\\ California State University \\ Los Angeles, CA 90032, USA}
\email{gkolesnik@sbcglobal.net}

\author[M. Madritsch]{Manfred Madritsch}
\address[M. Madritsch]{Department for Analysis and Computational Number
  Theory\\Graz University of Technology\\A-8010 Graz, Austria}
\email{madritsch@math.tugraz.at}

\author[Y. Son]{Younghwan Son}
\address[Y. Son]{Department of Mathematics\\ Ohio State University \\ Columbus Ohio 43210, USA}
\email{son@math.ohio-state.edu}

\author[R. Tichy]{Robert Tichy}
\thanks{The last author gratefully acknowledges the support of the FWF under grant P26114.}
\address[R. Tichy]{Department for Analysis and Computational Number
  Theory\\Graz University of Technology\\A-8010 Graz, Austria}
\email{tichy@tugraz.at}

\bigskip

\gdef\shorttitle{ }
\maketitle

\setcounter{section}{0}

\begin{abstract} 
 We establish new results on sets of recurrence and van der Corput sets in $\mathbb{Z}^k$ which refine and unify some of the previous results obtained by S\'{a}rk{\H o}zy, Furstenberg, Kamae and M\`endes France, and Bergelson and Lesigne. The proofs utilize a general equidistribution result involving prime powers which is of independent interest. 
 \end{abstract}


\section{Introduction}

 A. S\'{a}rk{\H o}zy established in \cite{Sa1}, \cite{Sa2} and \cite{Sa3} the following surprising results:
\begin{Theorem}
\label{sarkozy}
Let $E \subset \mathbb{N}$ be a set of positive upper density:
$$\overline{d}(E) := \limsup_{N \rightarrow \infty} \frac{|E \cap \{1, 2, \cdots, N \} |}{N} > 0.$$
\begin{enumerate}[(i)]
\item Let $k \in \mathbb{N} = \{ 1, 2, 3, \dots \}$. Then one can find arbitrarily large $n \in \mathbb{N}$ such that for some $x, y \in E$, $x-y = n^k$.

\item Denote by $\mathcal{P}$ be the set of prime numbers $\{2, 3, 5, 7, 11, \cdots \}$. One can find arbitrarily large $p \in \mathcal{P}$ such that for some $x,y \in E$, $x - y = p-1$. Also one can find arbitrarily large $q \in \mathcal{P}$ such that $x-y = q+1$.
\end{enumerate}
\end{Theorem}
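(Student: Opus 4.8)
\emph{Proof idea.} The plan is to reduce both parts to a single statement in ergodic theory via the Furstenberg correspondence principle, and then to prove that statement by a spectral argument that isolates the profinite part of the system. By the correspondence principle, a set $E$ with $\overline d(E)>0$ produces a measure preserving system $(X,\bor,\mu,T)$ and a set $A\in\bor$ with $\mu(A)=\overline d(E)$ such that $\overline d\bigl(E\cap(E-n)\bigr)\ge\mu(A\cap T^{-n}A)$ for every $n\in\NN$. So it suffices to show that, for every such system and every $A$ with $\mu(A)>0$, the set of $n$ of the prescribed form -- a $k$-th power in (i), or $p-1$ and $q+1$ with $p,q\in\mathcal P$ in (ii) -- for which $\mu(A\cap T^{-n}A)>0$ is unbounded.

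\emph{Spectral splitting and two estimates.} Put $g=\mathbf 1_A$ and $e(t)=\erm^{2\pi i t}$, and let $Z$ be the maximal profinite factor of $(X,\bor,\mu,T)$, realized as an inverse limit $\varprojlim\ZZ/N_j\ZZ$ of finite cyclic rotations. Split $g=h+g'$ with $h=\mathbb E(g\mid Z)$, so $g'\perp L^2(Z)$; since $T$ preserves $L^2(Z)$ and its orthogonal complement, $\mu(A\cap T^{-n}A)=\langle T^nh,h\rangle+\langle T^ng',g'\rangle$, and the spectral measure $\sigma_{g'}$ on $\RR/\ZZ$ (determined by $\langle T^ag',g'\rangle=\int e(a\theta)\,d\sigma_{g'}(\theta)$) has no atoms at roots of unity, hence vanishes on $\QQ/\ZZ$. \emph{(a) Error term:} restricting $n$ to a progression $s\ZZ$, the quantity $\frac1M\sum_{m\le M}\langle T^{(sm)^k}g',g'\rangle=\int\bigl(\frac1M\sum_{m\le M}e(s^km^k\theta)\bigr)\,d\sigma_{g'}(\theta)$ tends to $0$, because the integrand is bounded by $1$, tends to $0$ at every irrational $\theta$ by Weyl's theorem on polynomial equidistribution, and $\sigma_{g'}$ charges no rationals; in the prime cases, averaging over the primes $p\le N$ with $p\equiv\pm1\pmod{N_j}$, the analogous vanishing needs $\frac1{\pi(N;N_j,\pm1)}\sum_{\substack{p\le N\\ p\equiv\pm1\,(N_j)}}e(p\theta)\to0$ for irrational $\theta$, i.e.\ Vinogradov's estimate for exponential sums over primes in its version for a fixed residue class. \emph{(b) Main term:} if the finite quotient $\ZZ/N_j\ZZ$ $\varepsilon$-captures $h$, i.e.\ $\|h-\mathbb E(h\mid\ZZ/N_j\ZZ)\|_2<\varepsilon$, then for every power $T^a$ of $T$ that acts trivially on $\ZZ/N_j\ZZ$ one has $\|T^ah-h\|_2<2\varepsilon$, whence $\langle T^ah,h\rangle\ge\|h\|_2^2-2\varepsilon\|h\|_2$; now $T^{n^k}$ acts trivially as soon as $N_j\mid n^k$, which holds for all $n\in s_j\ZZ$ with $s_j=\prod_{p\mid N_j}p^{\lceil v_p(N_j)/k\rceil}$, while $T^{p-1}$ (resp.\ $T^{q+1}$) acts trivially whenever $p\equiv1$ (resp.\ $q\equiv-1$) $\pmod{N_j}$ -- a positive proportion of $\mathcal P$ by Dirichlet's theorem; finally $\|h\|_2\ge\|h\|_1=\mu(A)$.

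\emph{Conclusion and main obstacle.} Fixing $\varepsilon=\tfrac14\mu(A)^2$, then $j$ large enough for $\ZZ/N_j\ZZ$ to $\varepsilon$-capture $h$, then $s=s_j$ in case (i), estimates (a) and (b) combine to give $\liminf_{M\to\infty}\frac1M\sum_{m\le M}\mu\bigl(A\cap T^{-(sm)^k}A\bigr)\ge\mu(A)^2-2\varepsilon\|h\|_2>0$ and, likewise, $\liminf_{N\to\infty}\frac1{\pi(N;N_j,\pm1)}\sum_{\substack{p\le N\\ p\equiv\pm1\,(N_j)}}\mu\bigl(A\cap T^{-(p\mp1)}A\bigr)>0$. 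Hence $\mu(A\cap T^{-(sm)^k}A)>0$ for infinitely many $m$, and $\mu(A\cap T^{-(p-1)}A)>0$ (resp.\ with $q+1$) for infinitely many primes; pushing this back through the correspondence principle gives the three assertions of the theorem with $n=sm$, $p$, $q$ arbitrarily large. The one substantial ingredient is the prime case of estimate (a): it rests on the uniform distribution of $p\theta\bmod1$ as $p$ runs through a \emph{fixed} arithmetic progression of primes -- an exponential sum over primes twisted by a Dirichlet character -- and it is precisely the quantitative strengthening of this, to prime powers and related sequences, that the general equidistribution result announced in the abstract is designed to supply; the profinite-factor structure and the surrounding functional-analytic manipulations are routine.
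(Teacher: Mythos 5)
Your proposal should first be set against what the paper actually does with this statement: Theorem \ref{sarkozy} is not proved in the paper at all. It is quoted from S\'{a}rk{\H o}zy \cite{Sa1}, \cite{Sa2}, \cite{Sa3}, where it is obtained by the Hardy--Littlewood method, and the authors merely observe that it also follows by the ergodic method (Theorem \ref{fur-sar} plus Furstenberg's correspondence principle for part (i), and the Bergelson--Lesigne result quoted as Theorem \ref{sar extension} for part (ii)); the paper's own work goes into the stronger statements of Sections 2--5. So your argument is a genuinely different route from anything written out here, but it runs closely parallel to the machinery the paper develops for its generalizations: your decomposition of $\mathbf 1_A$ into a ``rational'' part plus a part whose spectral measure charges no rational points is exactly the splitting $\mathcal{H}=\mathcal{H}_{rat}\oplus\mathcal{H}_{tot}$ of Theorem \ref{Hilbert space}(ii) used in the proof of Theorem 5.1, and the exponential-sum input you need (equidistribution of $p\theta$ over primes in a fixed residue class) is the linear, $h=\pm1$ special case of Theorem \ref{ud} and Corollary \ref{ud along}. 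What your route buys is a short, self-contained qualitative proof from classical inputs (Weyl, Vinogradov, Dirichlet); what the cited sources and the paper's framework buy are quantitative density bounds and the much stronger conclusions (nice recurrence, vdC, positive lower relative density, $\mathbb{Z}^{k+l}$ versions).

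One technical point needs repair. The correspondence principle does not give you an ergodic system, and for a non-ergodic $(X,\mathcal{B},\mu,T)$ the ``maximal profinite factor realized as $\varprojlim \mathbb{Z}/N_j\mathbb{Z}$'' is problematic: if, say, $T$ has a set of fixed points of positive measure together with a part carrying an $f\neq0$ with $Tf=-f$, then no factor of $(X,\mu,T)$ is a nontrivial cyclic rotation (fixed points cannot map to a rotation without fixed points), yet $\sigma_f$ has an atom at $1/2$; so with your $Z$ the key claim $\sigma_{g'}(\mathbb{Q}/\mathbb{Z})=0$ can fail. The standard fix --- and the one implicit in the paper --- is to replace $L^2(Z)$ by $\mathcal{H}_{rat}$, the closed span of $\{f: T^{q!}f=f\}$ over $q\in\mathbb{N}$, and to replace $\mathbb{E}(h\mid\mathbb{Z}/N_j\mathbb{Z})$ by $h_q=\mathbb{E}(g\mid\mathcal{I}_{q!})$, the conditional expectation on the $\sigma$-algebra of $T^{q!}$-invariant sets: these are positive, mean-preserving, satisfy $T^ah_q=h_q$ whenever $q!\mid a$ (so take $n\in s\mathbb{Z}$ with $q!\mid s^k$, resp.\ $p\equiv\pm1\pmod{q!}$), they converge to the projection of $g$ on $\mathcal{H}_{rat}$, and the orthocomplement $\mathcal{H}_{tot}$ has spectral measure with no rational atoms because any eigenfunction with root-of-unity eigenvalue is fixed by some power of $T$. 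With that substitution, and granting the Vinogradov-type estimate you cite, your proof is correct.
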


\begin{Remark} \mbox{}
\begin{enumerate}
\item In \cite{Sa1} the case of the equation $x-y=n^2$ is considered and a quantitative
  refinement of statement $(i)$ is proved by an application of the
  Hardy-Littlewood method. Let $A(N)=\lvert E\cap\{1,\ldots,N\}\rvert$ and
  assume that the difference set of $E$ does not contain a square of an
  integer. It is proved in \cite{Sa1} that
  \[
    \frac{A(N)}{N}=O\left(\frac{(\log\log N)^{\frac23}}{(\log N)^{\frac13}}\right)=o(1),
  \]
  which implies assertion $(i)$ of Theorem 1.1. In \cite{Sa2} a lower bound for
  $A(N)$ is established and in \cite{Sa3} similar results are given for
  $n^k$, $k\in\mathbb{N}$, as well as a quantitative version of
  assertion $(ii)$ of Theorem 1.1.

  The best bound on square differences is by Pintz, Steiger and
  Szemer{\'e}di~\cite{PSS}. In particular, they combined the
  Hardy-Littlewood method with a combinatorial construction in order
  to show that
  \[\frac{A(N)}{N}=O\left((\log N)^{-c_n}\right),\]
  where $c_n\to\infty$.

\item It is not hard to see that only shifts by 1 or -1 can ``work" for part (ii) of Theorem \ref{sarkozy}: for any $h \ne \pm 1$ there exists $a$ and $b$ such that the set $a \mathbb{N} + b$ provides a counter example. 
\end{enumerate}
\end{Remark}

Theorem \ref{sarkozy} can also be obtained with the help of the ergodic method introduced by H. Furstenberg in \cite{F1}. While the ergodic method does not provide sharp finitistic bounds, it allows us to see S\'{a}rk{\H o}zy's results as statements about recurrence in measure preserving systems and leads to a variety of strong extensions of Theorem \ref{sarkozy}.

To illustrate how the ergodic method works, let us consider, for example, the following polynomial refinement, due to Furstenberg, of the classical Poincar\'e recurrence theorem.

\begin{Theorem}[\cite{F2}, Theorem 3.16]
\label{fur-sar}
Let $(X, \mathcal{B}, \mu)$ be a probability space and let $T$ be an invertible measure preserving transformation.\footnote{ We will refer to the quadruple $(X, \mathcal{B}, \mu, T)$ as a measure preserving system and will tacitly assume that  $T$ is invertible and $\mu(X) = 1$.} Let $A \in \mathcal{B}$ with $\mu(A) > 0$.
 For any $g(t) \in \mathbb{Z}[t]$ with $g(0) = 0$, there are arbitrarily large $n \in \mathbb{N}$ such that $\mu(A \cap T^{-g(n)}A) > 0.$
\end{Theorem}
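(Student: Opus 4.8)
The plan is to reduce the assertion, via the spectral theorem, to the classical Weyl equidistribution theorem for polynomial sequences. First I would set $f=\mathbf 1_A\in L^2(X,\mathcal B,\mu)$ and let $U$ be the Koopman operator $Uh=h\circ T$, which is unitary because $T$ is invertible, so that $\mu(A\cap T^{-n}A)=\langle U^nf,f\rangle$ for every $n\in\mathbb Z$. By the spectral theorem applied to $U$ and $f$ there is a positive finite Borel measure $\sigma$ on the circle $\mathbb T=\mathbb R/\mathbb Z$ with
\[
\mu(A\cap T^{-n}A)=\int_{\mathbb T}\cos(2\pi n\theta)\,d\sigma(\theta)\quad(n\in\mathbb Z),\qquad \sigma(\mathbb T)=\|f\|_2^2=\mu(A),
\]
where I have used that the left-hand side is real. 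The single structural fact I need about $\sigma$ is that its atom at $0$ is substantial: the spectral projection onto the eigenvalue $1$ of $U$ equals the conditional expectation onto the $\sigma$-algebra $\mathcal I$ of $T$-invariant sets (mean ergodic theorem), whence $\sigma(\{0\})=\bigl\|\mathbb E[f\mid\mathcal I]\bigr\|_2^2\ge\bigl(\int f\,d\mu\bigr)^2=\mu(A)^2>0$.

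We may assume $g$ is non-constant, for otherwise $g\equiv0$ and there is nothing to prove. For $k\ge1$ put $Q_k=\operatorname{lcm}(1,2,\dots,k)$; the goal is to show that \emph{for some $k$}
\[
\limsup_{N\to\infty}\ \frac1N\sum_{m=1}^{N}\mu\bigl(A\cap T^{-g(Q_km)}A\bigr)\ >\ 0.
\]
This is enough: the summands are nonnegative, and a nonnegative sequence whose Cesàro averages have positive upper limit cannot be eventually $0$, so it would follow that $\mu(A\cap T^{-g(Q_km)}A)>0$ for infinitely many $m$, and the integers $n=Q_km$ may then be taken arbitrarily large. To evaluate the average I would interchange the finite sum with the integral against $\sigma$ and apply Weyl's theorem to the integer polynomial $m\mapsto g(Q_km)$, which has no constant term (since $g(0)=0$) and leading coefficient a nonzero multiple of $Q_k^{\deg g}$. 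This yields: for every irrational $\theta$ the inner average $\frac1N\sum_{m\le N}\cos(2\pi g(Q_km)\theta)$ tends to $0$; for $\theta=a/q\in\mathbb Q$ it converges to some $\gamma_k(a/q)\in[-1,1]$; and $\gamma_k(a/q)=1$ whenever $q\mid Q_k$, because then $Q_k\mid g(Q_km)$ forces $\cos(2\pi g(Q_km)a/q)=1$ for every $m$.

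By dominated convergence — the part of $\sigma$ carried by the irrationals contributes nothing in the limit, and $\sigma$ restricted to the countable set $\mathbb Q$ is purely atomic — the Cesàro limit equals $\sum_{a/q}\sigma(\{a/q\})\gamma_k(a/q)$, the sum being over the rational atoms of $\sigma$. Since the terms with $q\mid Q_k$ (in particular the term at $0$) contribute $\sigma(\{a/q\})\gamma_k(a/q)\ge0$ while every other term is at least $-\sigma(\{a/q\})$, this is
\[
\ge\ \sigma(\{0\})\ -\!\!\sum_{\substack{a/q\ \text{an atom of }\sigma\\ q\,\nmid\,Q_k}}\!\!\sigma(\{a/q\}).
\]
Now $q\nmid Q_k$ forces $q>k$, and the total mass of all atoms of $\sigma$ is at most $\sigma(\mathbb T)<\infty$, so the subtracted sum tends to $0$ as $k\to\infty$. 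Fixing $k$ large enough that it drops below $\sigma(\{0\})$, the displayed $\limsup$ is $>0$, which completes the argument.

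The one genuinely substantive point is the treatment of the rational atoms of $\sigma$ whose denominator does not divide $Q_k$ — the ``arithmetic resonances'' that the ergodic method must absorb; the mechanism is precisely that $\sigma$ carries only finitely much mass in total, so choosing $Q_k=\operatorname{lcm}(1,\dots,k)$, which is divisible by every $q\le k$, drives the uncontrolled atomic mass below the guaranteed weight $\sigma(\{0\})\ge\mu(A)^2$. Everything else is standard: the spectral theorem, the mean ergodic theorem identifying the eigenvalue-$1$ projection with $\mathbb E[\,\cdot\mid\mathcal I]$, and Weyl's equidistribution theorem for polynomial sequences. In fact the same scheme shows the stronger statement that $\{g(Q_k n):n\in\mathbb N\}$ is a set of recurrence for the given system, once $k$ is chosen large relative to $\sigma$.
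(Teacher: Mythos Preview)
Your proof is correct. The paper does not supply its own proof of this statement --- it is quoted from Furstenberg's book as background --- so there is no ``paper's proof'' to compare against directly. That said, your spectral approach (pass to the spectral measure $\sigma$ of $1_A$, kill the irrational part via Weyl, and then choose $Q_k=\operatorname{lcm}(1,\dots,k)$ so that the uncontrolled rational atomic mass is driven below $\sigma(\{0\})\ge\mu(A)^2$) is precisely the mechanism the paper itself uses for its own recurrence results: compare your argument with the proof of Lemma~\ref{vdc} in Section~4, which establishes the nice $FC^+$ property by the same passage through $A_q$ and $B_q$ and the same appeal to $\sigma(A_q)-\sigma(B_q)\to\sigma(\mathbb{Q}\cap\mathbb{T})$. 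Furstenberg's original proof in \cite{F2} proceeds somewhat differently, via a decomposition into weakly mixing and compact extensions rather than working directly with the spectral measure, but the two routes are essentially equivalent for single recurrence, and yours is closer in spirit to the present paper.
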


Theorem \ref{fur-sar} implies the following combinatorial result which generalizes Theorem \ref{sarkozy} ($i$).

\begin{Theorem}[\cite{F2}. Proposition 3.19]
\label{sarkozy 2}
Let $E \subset \mathbb{N}$ have positive upper Banach density:
$${d}^*(E) := \limsup_{N -M \rightarrow \infty} \frac{|E \cap \{M, M+1, \cdots, N-1 \} |}{N-M} > 0.$$ 
For any $g(t) \in \mathbb{Z}[t]$ with $g(0)=0$, there are arbitrarily large $n$ such that
$${d}^* (E \cap (E- g(n)) ) > 0.$$
\end{Theorem}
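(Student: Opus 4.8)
The plan is to deduce Theorem~\ref{sarkozy 2} from Theorem~\ref{fur-sar} by means of \emph{Furstenberg's correspondence principle}. First I would model $E$ symbolically: let $X=\{0,1\}^{\mathbb{Z}}$ with the product topology, a compact metrizable space; let $T\colon X\to X$ be the shift $(Tx)(n)=x(n+1)$, which is a homeomorphism, hence invertible; and let $\xi\in X$ be the indicator sequence of $E$ (it is convenient to regard $E$ as a subset of $\mathbb{Z}$, or equivalently to set $\xi(m)=0$ for $m\le 0$; recall $d^*$ is translation invariant). Put $A=\{x\in X:x(0)=1\}$, which is clopen and therefore a continuity set for every Borel probability measure on $X$.

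Next I would manufacture an invariant measure adapted to $E$. Pick intervals $[M_j,N_j)$ with $N_j-M_j\to\infty$ and $|E\cap[M_j,N_j)|/(N_j-M_j)\to d^*(E)$, and set
\[
  \mu_j=\frac{1}{N_j-M_j}\sum_{l=M_j}^{N_j-1}\delta_{T^l\xi}.
\]
By weak-$*$ compactness of the space of Borel probability measures on the compact metrizable space $X$, after passing to a subsequence we may assume $\mu_j\to\mu$. Since the total variation of $\mu_j-T_*\mu_j$ is at most $2/(N_j-M_j)\to 0$, the limit $\mu$ is $T$-invariant, so $(X,\mathcal{B},\mu,T)$ is a measure preserving system in the sense of the paper. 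As $A$ is clopen,
\[
  \mu(A)=\lim_j\mu_j(A)=\lim_j\frac{|E\cap[M_j,N_j)|}{N_j-M_j}=d^*(E)>0.
\]

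Now comes the bookkeeping that links the dynamics back to $E$. For $m\in\mathbb{Z}$ one has $T^{-m}A=\{x:x(m)=1\}$, so $T^l\xi\in A\cap T^{-g(n)}A$ exactly when $\xi(l)=\xi(l+g(n))=1$, i.e.\ when $l\in E\cap(E-g(n))$. Since $A\cap T^{-g(n)}A$ is clopen,
\[
  \mu\bigl(A\cap T^{-g(n)}A\bigr)=\lim_j\frac{\bigl|E\cap(E-g(n))\cap[M_j,N_j)\bigr|}{N_j-M_j}\le d^*\bigl(E\cap(E-g(n))\bigr),
\]
where the inequality uses that $N_j-M_j\to\infty$ in the definition of $d^*$ (the $O(1)$ boundary terms coming from indices $l$ near $M_j$ with $l+g(n)$ outside the relevant range are harmless). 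Finally, applying Theorem~\ref{fur-sar} to the system $(X,\mathcal{B},\mu,T)$, the set $A$, and the polynomial $g$ yields arbitrarily large $n$ with $\mu(A\cap T^{-g(n)}A)>0$; for each such $n$ the last display forces $d^*(E\cap(E-g(n)))>0$, which is the claim.

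I do not expect a genuine obstacle here: this is the standard correspondence-principle reduction, and Theorem~\ref{fur-sar} does all of the dynamical work. The only points requiring a little care are confirming that the weak-$*$ limit $\mu$ is honestly $T$-invariant, and checking that the comparison between $\mu(A\cap T^{-g(n)}A)$ and $d^*(E\cap(E-g(n)))$ points in the right direction — which is precisely why one routes everything through the averaged measures $\mu_j$ rather than attempting to compare a density with a measure directly.
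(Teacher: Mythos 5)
Your argument is correct and follows essentially the same route as the paper: the paper deduces Theorem~\ref{sarkozy 2} from Theorem~\ref{fur-sar} by invoking Furstenberg's correspondence principle (with citation), while you simply supply the standard proof of that principle via the shift on $\{0,1\}^{\mathbb{Z}}$ and a weak-$*$ limit of empirical measures before applying Theorem~\ref{fur-sar} in the same way. All the details you check (invariance of the limit measure, $\mu(A)=d^*(E)$, and the inequality $\mu(A\cap T^{-g(n)}A)\le d^*(E\cap(E-g(n)))$) are accurate, so no changes are needed.
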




To derive Theorem \ref{sarkozy 2} from Theorem \ref{fur-sar} one can utilize Furstenberg's correspondence principle (see \cite{B3}), which for the case in question says that for any $E \subset \mathbb{N}$ with $d^*(E)>0$ there exist an invertible measure preserving system $(X, \mathcal{B}, \mu, T)$ and $A \in \mathcal{B} $ with $\mu(A) = d^*(E)$ such that for any $n \in \mathbb{Z}$ one has
$$d^*(E \cap E-n) \geq \mu(A \cap T^{-n}A).$$

One can also show that Theorem \ref{sarkozy 2} implies Theorem \ref{fur-sar}. To see this one can utilize Theorem \ref{sarkozy} from \cite{B1} (see also \cite{BM}.)

\begin{Definition}
\label{def1.1}
A set $R \subset \mathbb{N}$ is called {\it{a set of recurrence}} if for any invertible measure preserving system $(X, \mathcal{B}, \mu , T)$ and any $A \in \mathcal{B}$ with $\mu(A) > 0$, there exists $n \in R$ such that $\mu(A \cap T^{-n}A) > 0$.
\end{Definition}


Applications of ergodic theory to combinatorics and number theory bring to life various natural refinements of Definition \ref{def1.1}. Here is a sample of some notions of recurrence relevant to this paper.\footnote{For convenience of the discussion we define these notions for $\mathbb{N}$. We will introduce later the more general notions in $\mathbb{Z}^k$.}
\begin{itemize}
\item {\it{Nice recurrence}} (See \cite{B2}). A set $R \subset \mathbb{N}$ is called a set of nice recurrence if for any measure preserving system $(X, \mathcal{B}, \mu , T)$, any $A \in \mathcal{B}$ with $\mu(A) > 0$, and $\epsilon > 0$, there exist infinitely many $n \in R$ such that $\mu(A \cap T^{-n}A) \geq \mu(A)^2 - \epsilon$.
\item {\it{vdC sets}} (See \cite{KM}). A set $H \subset \mathbb{N}$ is called a van der Corput set, or a vdC set if the uniform distribution mod 1 of the sequence $(x_{n+h} - x_n)_{n \in \mathbb{N}} $ for any $h \in H$ implies the uniform distribution mod 1 of the sequence $(x_n)_{n \in \mathbb{N}}.$  Equivalently (see \cite{BL}), $H \subset \mathbb{N}$ is a vdC set if for any sequence of complex numbers $(u_n)_{n \in \mathbb{N}}$ of modulus $1$, such that for any $h \in H$ $\lim\limits_{N \rightarrow \infty} \frac{1}{N} \sum\limits_{n=1}^N u_{n+h} \overline{u_n} = 0$, one has  $\lim\limits_{N \rightarrow \infty} \frac{1}{N} \sum\limits_{n=1}^N u_n = 0$. 
\end{itemize} 
Clearly any set of nice recurrence is a set of recurrence. It is somewhat less obvious that any vdC set is a set of recurrence. (See \cite{KM} for the proof.) One can also show that not every set of recurrence is a set of nice recurrence (see \cite{Mc}) and that not every set of recurrence is a vdC set (see \cite{Bou}.)

It turns out that the sets mentioned above, namely the sets $\mathcal{P} -1$, $\mathcal{P}+1$ as well as the sets of the form $\{g(n) : n \in \mathbb{Z} \}$, where $g(t) \in \mathbb{Z}[t]$ and $g(0) =0$, are sets of nice recurrence and also $vdC$ sets. (See, for example, \cite{BFMc} and \cite{BL}.)

As a matter of fact the following simultaneous extension of Theorem \ref{sarkozy} and Theorem \ref{fur-sar}
holds true. (See Proposition 1.22 and Corollary 2.13 in \cite{BL}. See also Theorem \ref{sarkozy type} below.)
\begin{Theorem}
\label{sar extension}
For any $g(t) \in \mathbb{Z}[t]$ with $g(0) = 0$, the sets $\{g(p-1) : p \in \mathcal{P} \}$ and $\{g(p+1) : p \in \mathcal{P} \}$ are sets of nice recurrence and also are vdC sets.
\end{Theorem}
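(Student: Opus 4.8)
The plan is to reduce Theorem~\ref{sar extension} to the two basic facts that underlie all such results: (a) for any measure preserving system $(X,\mathcal B,\mu,T)$, any $A\in\mathcal B$ with $\mu(A)>0$, and any $g\in\mathbb Z[t]$ with $g(0)=0$, the set $\{g(n):n\in\mathbb N\}$ is a set of nice recurrence and a vdC set (this is the classical polynomial case, available via \cite{BFMc} and \cite{BL}); and (b) the primes are \emph{equidistributed in residue classes in the appropriate uniform sense}, which is what lets one replace the full range of $n$ by the range $n=p-1$ (resp.\ $n=p+1$) with $p$ prime. The bridge between (a) and (b) is spectral: by the spectral theorem, nice recurrence for a system reduces to a statement about the positive-definite sequence $\gamma(n)=\langle T^nf,f\rangle$, equivalently about $\int e^{2\pi i n\theta}\,d\sigma(\theta)$ for a positive measure $\sigma$ on the circle, and the vdC property likewise reduces (via the Bergelson--Lesigne reformulation quoted in the excerpt) to a statement about correlation sequences $\frac1N\sum_{n\le N}u_{n+h}\overline{u_n}$. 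In both cases the key analytic input is: for every $\alpha\in\mathbb R$,
\[
\lim_{N\to\infty}\frac1{\pi(N)}\sum_{p\le N} e\bigl(\alpha\, g(p-1)\bigr)
\]
exists and equals the corresponding average over all integers $n$ when $\alpha$ is irrational (it is $0$), while for rational $\alpha$ the primes visit each admissible residue class with the right frequency by Dirichlet/Siegel--Walfisz. This is precisely a special case of the ``general equidistribution result involving prime powers'' advertised in the abstract, and I would quote that result once it is proved in the body of the paper.

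Concretely, for the vdC half: let $(u_n)$ be a sequence of modulus-one complex numbers with $\frac1N\sum_{n\le N}u_{n+h}\overline{u_n}\to 0$ for every $h\in\{g(p-1):p\in\mathcal P\}$. One wants $\frac1N\sum_{n\le N}u_n\to0$. The standard van der Corput trick says that it suffices to show that the Cesàro averages of $n\mapsto \frac1H\sum_{j=1}^{H}\bigl(\text{correlation at lag }h_j\bigr)$ can be made small for a well-chosen finite set $\{h_1,\dots,h_H\}$ drawn from our set $H$; choosing the $h_j$ of the form $g(p_j-1)$ and invoking the quoted equidistribution of $g(p-1)$ (which controls the Weyl sums $\frac1H\sum_{j}e(g(p_j-1)\theta)$ uniformly in $\theta$ exactly as in the polynomial case) closes the argument. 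For the nice-recurrence half: writing $\gamma(n)=\int_{\mathbb T}e(n\theta)\,d\sigma(\theta)$ with $\sigma\ge0$ and $\sigma(\mathbb T)=\mu(A)$ after a suitable normalization (more precisely work with $f=\mathbf 1_A$ and split off the mean, $\langle T^nf,f\rangle=\mu(A)^2+\int_{\mathbb T\setminus\{0\}}e(n\theta)\,d\sigma_0(\theta)$), one shows $\frac1{\pi(N)}\sum_{p\le N}\langle T^{g(p-1)}f,f\rangle\to\mu(A)^2$: the $\{0\}$-atom contributes $\mu(A)^2$, and for the continuous part and the non-trivial atoms one uses that $\frac1{\pi(N)}\sum_{p\le N}e(g(p-1)\theta)\to 0$ for every $\theta\ne 0$ (irrational $\theta$ from Weyl/prime equidistribution; rational $\theta=a/q$ from the fact that $g(p-1)\bmod q$ is equidistributed over the values $g(r-1)$, $r$ coprime to $q$, none of which with the right sign can force a nonzero limit --- here the shift by $1$, i.e.\ the fact that $p-1$ ranges over $\{0\}\cup(\text{units}-1)$, is exactly what makes $\theta=a/q$ harmless, matching Remark~1.2(2)). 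Dominated convergence against the finite measure $\sigma$ then gives the limit, so infinitely many $p$ satisfy $\mu(A\cap T^{-g(p-1)}A)\ge\mu(A)^2-\epsilon$. The $p+1$ case is identical with $p-1$ replaced by $p+1$.

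The main obstacle is the uniformity in the prime equidistribution step: one needs $\frac1{\pi(N)}\sum_{p\le N}e(g(p-1)\theta)\to0$ for \emph{all} irrational $\theta$, and more importantly one needs enough uniformity in $\theta$ (or a clean dichotomy minor-arc/major-arc as in Vinogradov's method, combined with Weyl differencing to handle the polynomial $g$) to run the van der Corput finite-$H$ argument and to justify passing the limit inside $d\sigma$. For $\deg g=1$ this is essentially classical (Vinogradov), and for higher degree it is the content of the equidistribution theorem the paper sets up; granting that theorem, the rest is the routine spectral/van der Corput packaging sketched above, and the only genuinely new point relative to the purely polynomial results of \cite{BL}, \cite{BFMc} is the insertion of the prime-power equidistribution input in place of ordinary Weyl equidistribution. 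It is also worth noting that one should handle the vdC and nice-recurrence assertions uniformly by the observation (used in \cite{BL}) that a set which is ``good'' for all correlation sequences of the above type is simultaneously a vdC set and a set of nice recurrence; I would state this reduction explicitly and then verify the single analytic criterion.
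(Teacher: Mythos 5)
Your overall plan (spectral reduction plus equidistribution of $g(p\pm1)$ along primes) is in the right spirit, but the execution has a genuine gap at the rational frequencies, and it is exactly the point where the real work lies. The analytic claim you lean on -- that $\frac{1}{\pi(N)}\sum_{p\le N} e\bigl(\theta\, g(p-1)\bigr)\to 0$ for \emph{every} $\theta\neq 0$ -- is false for rational $\theta$: for $\theta=a/q$ the limit is $\frac{1}{\phi(q)}\sum_{(t,q)=1}e\bigl(g(t-1)a/q\bigr)$, which is in general nonzero (with $g(t)=t$ and $q=5$ it equals $-\tfrac14 e(4a/5)$). Consequently your assertion that $\frac{1}{\pi(N)}\sum_{p\le N}\mu(A\cap T^{-g(p-1)}A)\to\mu(A)^2$ also fails: take $X=\mathbb{Z}/5\mathbb{Z}$ with normalized counting measure, $Tx=x+1$, $A=\{0,1\}$, $g(t)=t$. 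Since $p-1 \bmod 5$ equidistributes over $\{0,1,2,3\}$ along primes, the average converges to $\tfrac14\bigl(\tfrac25+\tfrac15+0+0\bigr)=\tfrac{3}{20}$, which is strictly less than $\mu(A)^2=\tfrac{4}{25}$. So ``dominated convergence against $\sigma$ with vanishing Weyl sums'' cannot deliver the nice-recurrence bound over the full set of primes, and the same defect undermines the vdC half, because the spectral criterion for vdC sets (Theorem 1.8 of \cite{BL}, quoted in the paper) again hinges on the behaviour of $\hat\sigma$ at the rational points of $\mathbb{T}$; your finite-$H$ van der Corput sketch never addresses them.

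The missing idea -- and the one the paper actually uses -- is to restrict, for each $q$, to the subfamily of shifts coming from primes $p\equiv 1 \pmod{q!}$ (resp. $p\equiv -1 \pmod{q!}$ for the $p+1$ case): then $q!\mid p\mp1$, hence $q!\mid g(p\mp1)$ because $g(0)=0$ and $g\in\mathbb{Z}[t]$, so every rational frequency with denominator dividing $q!$ contributes exactly $1$ along this subfamily, while for irrational $\theta$ one still has $\frac1{|A_N|}\sum e(\theta\,g(p\mp1))\to 0$ by equidistribution along primes in an arithmetic progression (Corollary \ref{ud along}; cf. Corollary \ref{lemma p-1} and Proposition \ref{positive density}); the remaining rational frequencies are absorbed by letting $q\to\infty$ and using continuity of the measure. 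This is precisely the nice $FC^+$ criterion of Lemma \ref{vdc}, which yields the vdC and nice-recurrence conclusions simultaneously; the paper itself disposes of Theorem \ref{sar extension} by citing Proposition 1.22 and Corollary 2.13 of \cite{BL} and by proving the more general Theorem \ref{sarkozy type} along these lines (note also that its Section 5 theorem claims $\ge\mu(A)^2-\epsilon$ only for the averages restricted to $D_i^{(r)}$, and mere positivity for the unrestricted averages -- consistent with the counterexample above). Your remark that the shift by $\pm1$ ``makes $\theta=a/q$ harmless'' points at the right phenomenon, but the mechanism is the passage to $p\equiv\pm1\pmod{q!}$, not vanishing of the rational Weyl sums; without that restriction the argument as written does not close.
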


One of the goals of this paper is to obtain a number of $n$-dimensional refinements and generalizations of Theorem \ref{sar extension}.

Our proofs of the results on sets of (nice) recurrence and (various enhanced versions of) van der Corput sets rely on the following general result about uniform distribution, which is of independent interest.

\newtheorem*{ud}{Theorem \ref{ud}}
 \begin{ud}[see Section 2]
Let $\xi(x) = \sum_{j=1}^{m} \alpha_j x^{\theta_j}$, where $0 <\theta_1 <
\theta_2 < \cdots < \theta_m$, $\alpha_j$ are non-zero reals and assume that if all
$\theta_j \in \mathbb{Z}^+$, then at least one $\alpha_j$ is irrational. Then
the sequence $ (\xi(p))_{p \in \mathcal{P}}$ is u.d. mod $1$.\footnote{We are tacitly assuming that the set $\mathcal{P} = (p_n)_{n \in \mathbb{N}}$ is naturally ordered, so that $(f(p))_{p \in \mathcal{P}}$ is just another way of writing $(f(p_n))_{n \in \mathbb{N}}$.} 
\end{ud}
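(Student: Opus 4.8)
## Proof strategy for Theorem \ref{ud}

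\medskip

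The plan is to verify Weyl's criterion. Writing $e(t):=e^{2\pi i t}$ and $\pi(N):=\lvert\mathcal{P}\cap[1,N]\rvert$, it suffices to show that for every nonzero integer $h$ one has $\frac{1}{\pi(N)}\sum_{p\le N}e(h\xi(p))\to 0$ as $N\to\infty$; by partial summation and the prime number theorem this is equivalent to the bound
\[
S(N):=\sum_{n\le N}\Lambda(n)\,e(h\xi(n))=o(N),
\]
where $\Lambda$ is the von Mangoldt function. I would then split the argument according to the arithmetic type of $\xi$.

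\medskip

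\emph{The polynomial case.} Suppose all $\theta_j$ are positive integers, so that $\xi(x)=\sum_j\alpha_j x^j\in\mathbb{R}[x]$ with at least one $\alpha_j$ irrational. Here $S(N)=o(N)$ is classical: one applies Vaughan's identity to decompose $S(N)$, with an admissible loss of logarithmic factors, into Type~I and Type~II bilinear sums, and estimates each of these by Weyl differencing followed by Weyl's inequality. The irrationality hypothesis is exactly what is needed: by Dirichlet's theorem, for infinitely many $N$ no coefficient of $h\xi$ admits a rational approximation with small denominator, so the minor-arc estimate applies with a power saving and no main term survives. (This recovers Rhin's theorem on the distribution of $f(p)\bmod 1$ for polynomial $f$; alternatively it follows from known equidistribution results for polynomial sequences evaluated at primes.)

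\medskip

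\emph{The non-polynomial case.} Now suppose some $\theta_j\notin\mathbb{Z}$. The structural fact I would exploit is that $\xi$ is of ``polynomial order $\theta_m$'' in the sense of van der Corput's method: for each fixed $k\ge 1$ and all large $x$, $\xi^{(k)}(x)\asymp x^{\theta_m-k}$, and all such derivatives are eventually monotone of the expected size (here the nonvanishing of the $\alpha_j$ and the strict ordering of the exponents guarantee that the top term dominates the relevant derivatives and that the remaining terms, contributing to different derivatives, cannot cancel it). Again I would use Vaughan's (or Heath--Brown's) identity to reduce $S(N)$ to a bounded number of Type~I sums $\sum_{d\sim D}\sum_{m}e(h\xi(dm))$ and Type~II sums $\sum_{m\sim M}\sum_{n\sim K}a_m b_n\,e(h\xi(mn))$ with $\lvert a_m\rvert,\lvert b_n\rvert\le 1$. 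In a Type~I sum the inner sum over $m$ has a phase with derivatives of polynomial type, so van der Corput's $A$- and $B$-processes (equivalently a suitable exponent pair) give cancellation over the trivial bound. In a Type~II sum, Cauchy--Schwarz in $m$ followed by expanding the square gives $\sum_{n_1,n_2\sim K}\sum_m e\bigl(h(\xi(mn_1)-\xi(mn_2))\bigr)$; the diagonal $n_1=n_2$ contributes $O(MK)$, while for $n_1\ne n_2$ the phase in $m$ is again of polynomial type (of size controlled by $\lvert n_1-n_2\rvert$), so van der Corput's method yields a saving. Optimizing the ranges $D,M,K$ against $N$ delivers $S(N)=o(N)$.

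\medskip

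The hard part will be the non-polynomial case when $\theta_m$ is large: the elementary exponent-pair bounds degrade, and to retain a genuine power saving one needs Vinogradov's mean value theorem (in its now-optimal Bourgain--Demeter--Guth form) or a careful iteration of the van der Corput $A$-process. A secondary technical point is the mixed regime, where $\xi$ carries both integer and non-integer exponents and one must verify, uniformly in $h$, that the leading non-integer term controls all the derivatives entering the van der Corput estimates; the integer-exponent terms cannot spoil this since they affect only the lower derivatives. The final passage from $\Lambda$-weighted sums on $[1,N]$ back to the normalization $1/\pi(N)$ over primes is routine partial summation.
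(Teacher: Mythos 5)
Your overall skeleton coincides with the paper's: Weyl's criterion, reduction to $\Lambda$-weighted exponential sums, a Vaughan/Heath--Brown decomposition into Type~I and Type~II sums (the paper's Lemma~\ref{lemma 3}, taken from \cite{Hea}), van der Corput estimates for the resulting bilinear sums, and an appeal to Rhin \cite{Rh} when all exponents are integers. But there is a genuine gap in your treatment of the mixed regime, and it is exactly where the paper has to work. Write $\xi=f+P$ with $P$ the integer-exponent (polynomial) part and $f$ the non-integer-exponent part; in this case the theorem makes no irrationality assumption, so the coefficients of $P$ may all be rational, and the degree $k$ of $P$ may exceed the largest non-integer exponent $\theta_r$. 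Then $P$, not $f$, dominates every derivative of order $\le k$, so your assertion that ``the integer-exponent terms affect only the lower derivatives and cannot spoil the estimates'' is backwards: the derivative tests you invoke (exponent pairs, \cite[Theorem 2.9]{GK}, reproduced here as Lemma~\ref{lemma 2}) require all derivatives up to the relevant order to scale like $GX^{-r}$ with $G$ small compared to $X^{q+2}$, and a rational-coefficient polynomial produces large low-order derivatives while contributing no cancellation whatsoever. The same problem reappears inside your Type~II sums after Cauchy--Schwarz, since the differenced phase still contains $P(mn_1)-P(mn_2)$, a degree-$k$ polynomial in $m$.

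The paper closes this gap with Lemma~\ref{lemma 5}: one first applies the Weyl--van der Corput inequality (Lemma~\ref{lemma 1}) exactly $k$ times, which annihilates $P$ (only the constant $a_k k!$ survives in the differenced phase $f_1$), and only then applies Lemma~\ref{lemma 2} to the differenced function, whose derivatives are governed solely by $f$; this combined estimate is what is fed into both the Type~I sums ($S_2$) and, after Cauchy--Schwarz, the Type~II sums ($S_4$) in Proposition~\ref{prop 1}. If you intend your iterated $A$-process to play this role, you must say so and difference to order at least $\deg P$; merely citing ``a suitable exponent pair'' does not deal with a rational polynomial component. A smaller point: your worry that large $\theta_m$ forces Vinogradov's mean value theorem is unnecessary --- since only $o(N)$ for fixed $\xi$ and harmonics $h\ll N^{1/10}$ is needed, iterating the $A$-process as in Lemmas~\ref{lemma 1}, \ref{lemma 2} and \ref{lemma 5} gives a power saving, however weak, for every fixed exponent, and that suffices for the discrepancy bound via the Erd\H{o}s--Tur\'an inequality.
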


One of the applications of Theorem \ref{ud} is the following von Neumann-type theorem along primes.

\newtheorem*{ergodic}{Theorem \ref{ergodic}}
\begin{ergodic}[see Section 3]
Let $c_1, \dots , c_k$ be distinct positive real numbers such that $c_i \notin \mathbb{N}$ for $i=1,2, \dots, k$. Let $U_1, \dots , U_k$ be commuting unitary operators on a Hilbert space $\mathcal{H}$.  Then,
$$\lim_{N \rightarrow \infty} \frac{1}{N}  \sum_{n=1}^N U_1^{[p_n^{c_1}]} \cdots U_k^{[p_n^{c_k}]} f = f^*,$$
where $p_n$ denotes $n$-th prime and $f^*$ is the projection of $f$ on $\mathcal{H}_{inv} (:= \{ f \in \mathcal{H} :  U_i f = f \,\, \textrm{for all} \,\, i\})$.
 \end{ergodic}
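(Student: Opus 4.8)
The plan is to reduce the theorem, via the joint spectral theorem for the commuting unitaries $U_1,\dots,U_k$, to a numerical fact about the equidistribution of the integer vectors $\bigl([p_n^{c_1}],\dots,[p_n^{c_k}]\bigr)$, and then to strip off the integer parts by a Fourier expansion so that Theorem~\ref{ud} can be quoted directly. Put $\mathbf{p}_n=(p_n^{c_1},\dots,p_n^{c_k})$ and, for $\theta\in\mathbb{R}$, $e(\theta)=\mathrm{e}^{2\pi\mathrm{i}\theta}$. Decompose $f=f^*+g$ with $g\perp\mathcal{H}_{inv}$. Since each $[p_n^{c_i}]$ is a positive integer and $U_if^*=f^*$ for all $i$, one has $U_1^{[p_n^{c_1}]}\cdots U_k^{[p_n^{c_k}]}f^*=f^*$ for every $n$, so the averages applied to $f^*$ are identically $f^*$; it remains to show the averages applied to $g$ tend to $0$. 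Let $E$ be the joint projection-valued spectral measure of $U_1,\dots,U_k$ on $\mathbb{T}^k=(\mathbb{R}/\mathbb{Z})^k$, so that $U_1^{a_1}\cdots U_k^{a_k}=\int_{\mathbb{T}^k}e(a\cdot t)\,dE(t)$ and $\mathcal{H}_{inv}=E(\{\mathbf{0}\})\mathcal{H}$; then $\mu_g:=\langle E(\cdot)g,g\rangle$ is a finite positive measure with $\mu_g(\{\mathbf{0}\})=0$, and expanding the square gives
\[
\Bigl\|\tfrac1N\sum_{n=1}^N U_1^{[p_n^{c_1}]}\cdots U_k^{[p_n^{c_k}]}g\Bigr\|^2
=\int_{\mathbb{T}^k}\Bigl|\tfrac1N\sum_{n=1}^N e\bigl(\textstyle\sum_{i=1}^k t_i[p_n^{c_i}]\bigr)\Bigr|^2 d\mu_g(t).
\]
Since the integrand is bounded by $1$, dominated convergence reduces the theorem to proving that $\tfrac1N\sum_{n=1}^N e\bigl(\sum_{i=1}^k t_i[p_n^{c_i}]\bigr)\to 0$ for every $t=(t_1,\dots,t_k)\in\mathbb{T}^k\setminus\{\mathbf{0}\}$.

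To handle this I would fix such a $t$, with representatives $t_i\in[0,1)$, and $\varepsilon>0$, and use the identity $e(t_i[x])=e(t_ix)\,\overline{h_i(x)}$, where $h_i(x):=e(t_i\{x\})$ is $1$-periodic and of bounded variation. Approximate each $h_i$ by a trigonometric polynomial $P_i(x)=\sum_{|m|\le M}c_{i,m}e(mx)$ (for instance a Fej\'er partial sum) with $|h_i-P_i|\le\varepsilon$ outside the $\varepsilon$-neighbourhood of $\mathbb{Z}$ and $|h_i-P_i|\le 2$ everywhere. Substituting $x_i=p_n^{c_i}$ and multiplying out the product over $i$, the sum $\tfrac1N\sum_{n\le N}e(\sum_i t_i[p_n^{c_i}])$ becomes a finite linear combination of averages $\tfrac1N\sum_{n\le N}e\bigl(\sum_i(t_i+m_i)p_n^{c_i}\bigr)$ with $|m_i|\le M$, plus an error bounded by $\tfrac1N\sum_{n\le N}\Psi(\mathbf{p}_n)$ for some fixed continuous $\Psi\ge 0$ on $\mathbb{T}^k$ with $\int_{\mathbb{T}^k}\Psi=O(\varepsilon)$. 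Now for every nonzero $s\in\mathbb{Z}^k$ the function $\xi(x)=\sum_i s_ix^{c_i}$ has all its exponents among the distinct non-integers $c_1,\dots,c_k$, so the hypothesis of Theorem~\ref{ud} is satisfied and $(\xi(p))_{p\in\mathcal{P}}$ is u.d.\ mod $1$; by Weyl's criterion $(\mathbf{p}_n)_n$ is u.d.\ in $\mathbb{T}^k$, so the error term converges to $\int_{\mathbb{T}^k}\Psi=O(\varepsilon)$. For the main terms, since $t\ne\mathbf{0}$ the coefficient vector $(t_1+m_1,\dots,t_k+m_k)$ is never the zero vector, so each such average is $\tfrac1N\sum_{n\le N}e(\xi(p_n))$ for a $\xi$ of the same shape (non-integer exponents, at least one nonzero coefficient), and Theorem~\ref{ud} gives that it tends to $0$. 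Consequently $\limsup_N\bigl|\tfrac1N\sum_{n\le N}e(\sum_i t_i[p_n^{c_i}])\bigr|=O(\varepsilon)$, and letting $\varepsilon\to0$ completes the argument.

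I expect the Fourier-approximation step to be the only real obstacle. The function $h_i$ is genuinely discontinuous at the integers, so one cannot simply insert a pointwise Fourier series into the sum; the approximation must be arranged so that the indices $n$ for which some coordinate $p_n^{c_i}$ lands near an integer contribute only $O(\varepsilon)$ in the limit, and that is precisely where the joint equidistribution of $(\mathbf{p}_n)_n$ in $\mathbb{T}^k$ — itself a consequence of Theorem~\ref{ud} — is used. The remaining pieces, namely the spectral reduction of the first paragraph and the vanishing of the individual exponential sums over primes, are routine once Theorem~\ref{ud} is available.
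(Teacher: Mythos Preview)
Your argument is correct and follows the same architecture as the paper's proof: decompose $f=f^*+g$, use the spectral measure (the paper's Theorem~\ref{BH}) to reduce to showing $\tfrac{1}{N}\sum_{n\le N}e\bigl(\sum_i\gamma_i[p_n^{c_i}]\bigr)\to0$ for every nonzero $\gamma\in\mathbb{T}^k$, and then appeal to the equidistribution result for prime powers. The only cosmetic difference is in how the integer parts are stripped: the paper packages this step as Proposition~\ref{ud lemma}(i), writing $e(\alpha[x])$ as a Riemann-integrable function of $(\alpha x,x)\in\mathbb{T}^2$ and invoking joint equidistribution there, while you do the equivalent thing by Fej\'er-approximating each $e(t_i\{x\})$ and bounding the contribution near the discontinuities using the equidistribution of $(p_n^{c_i})$ mod~$1$.
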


Theorem \ref{ergodic}, in turn, has the following corollaries.

\newtheorem*{ergodic sequence}{Corollary \ref{ergodic sequence}}
\begin{ergodic sequence}
Let $c_1, c_2, \dots , c_k$ be positive, non-integers. Let $T_1, T_2, \dots , T_k$ be commuting, invertible measure preserving transformations on a probability space $(X, \mathcal{B}, \mu)$. 
Then, for any $A \in \mathcal{B}$ with $\mu(A) > 0$, one has 
$$\lim_{N \rightarrow \infty} \frac{1}{N} \sum_{n=1}^{ N} \mu(A \cap T_1^{-[p_{n}^{c_1}]} \cdots T_k^{-[p_n^{c_k}]} A) \geq \mu^2(A),$$
where $p_n$ denotes the $n$-th prime.
\end{ergodic sequence}

\newtheorem*{prop}{Corollary \ref{prop}}
\begin{prop}
Let $c_1, \cdots ,c_k$ be positive non-integers.
If $E \subset \mathbb{Z}^k$ with ${d^*}(E) > 0$, then there exists a prime $p$ such that $([p^{c_1}], \cdots , [p^{c_k}] ) \in E - E$. 
Moreover,
$$\liminf_{N \rightarrow \infty} \frac{ | \{ p \leq N:   ([p^{c_1}], \cdots , [p^{c_k}] ) \in E - E\} | }{ \pi(N) } \geq {d^*}(E)^2,$$
where $\pi(N)$ is the number of primes less than or equal to $N$.
\end{prop}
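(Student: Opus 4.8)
The plan is to deduce Corollary~\ref{prop} from Corollary~\ref{ergodic sequence} by means of a version of Furstenberg's correspondence principle adapted to $\mathbb{Z}^k$. First I would invoke the $\mathbb{Z}^k$-correspondence principle: given $E \subset \mathbb{Z}^k$ with $d^*(E) > 0$, there exist a probability space $(X, \mathcal{B}, \mu)$, commuting invertible measure preserving transformations $T_1, \dots, T_k$ (coming from the $k$ coordinate shifts on $\{0,1\}^{\mathbb{Z}^k}$), and a set $A \in \mathcal{B}$ with $\mu(A) = d^*(E)$ such that for every $(n_1, \dots, n_k) \in \mathbb{Z}^k$ one has
$$d^*\bigl(E \cap (E - (n_1, \dots, n_k))\bigr) \geq \mu\bigl(A \cap T_1^{-n_1} \cdots T_k^{-n_k} A\bigr).$$
This is the standard construction: one realizes $E$ as (a subset corresponding to) a point in $\{0,1\}^{\mathbb{Z}^k}$ along a Følner sequence realizing the density, and takes $A = \{\omega : \omega(0) = 1\}$.

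Next, apply Corollary~\ref{ergodic sequence} with these particular $T_i$ and this $A$, using $n_i = [p_n^{c_i}]$: since the $c_i$ are positive non-integers, the hypotheses of Corollary~\ref{ergodic sequence} are met, giving
$$\liminf_{N \to \infty} \frac{1}{N} \sum_{n=1}^{N} \mu\bigl(A \cap T_1^{-[p_n^{c_1}]} \cdots T_k^{-[p_n^{c_k}]} A\bigr) \geq \mu^2(A) = d^*(E)^2.$$
Combining with the correspondence inequality, the averages $\frac{1}{N}\sum_{n \le N} d^*(E \cap (E - ([p_n^{c_1}], \dots, [p_n^{c_k}])))$ have $\liminf \geq d^*(E)^2 > 0$; in particular the summands are positive for infinitely many $n$, so $d^*(E \cap (E - ([p^{c_1}], \dots, [p^{c_k}]))) > 0$ for some prime $p$, which forces $([p^{c_1}], \dots, [p^{c_k}]) \in E - E$, proving the first assertion.

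For the quantitative statement I would argue that each term $d^*(E \cap (E - v))$ is bounded above by $1$ (it is a density of a subset of $\mathbb{Z}^k$), and the term is nonzero exactly when $v \in E - E$ (for $v = ([p^{c_1}], \dots, [p^{c_k}])$, nonemptiness of the intersection is equivalent to $v \in E-E$). Hence for each $N$,
$$\frac{1}{\pi(N)} \sum_{p \le N} d^*\bigl(E \cap (E - ([p^{c_1}], \dots, [p^{c_k}]))\bigr) \leq \frac{|\{p \le N : ([p^{c_1}], \dots, [p^{c_k}]) \in E - E\}|}{\pi(N)},$$
since each nonzero summand contributes at most $1$ to the left side and exactly the indicator on the right. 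Reindexing the sum over primes $p \le N$ as a sum over $n \le \pi(N)$ and taking $\liminf$, the left side has $\liminf \geq d^*(E)^2$ by the previous paragraph (a change of normalization from $N$ to $\pi(N)$ does not affect the $\liminf$ of a bounded sequence of Cesàro averages, since $\pi(N) \sim N/\log N$ and one is averaging the same bounded sequence along a reindexing), and the desired bound $\liminf_N \frac{|\{p \le N : \dots\}|}{\pi(N)} \geq d^*(E)^2$ follows.

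The main obstacle is purely bookkeeping rather than conceptual: one must set up the $\mathbb{Z}^k$ correspondence principle carefully (the right Følner sequence, the right identification of $E-E$ with the support of the intersection densities) and be careful that the reindexing from averaging over $n \le N$ (as in Corollary~\ref{ergodic sequence}) to averaging over primes $p \le N$ genuinely preserves the $\liminf$; both are routine but deserve a sentence of justification. No new analytic input beyond Corollary~\ref{ergodic sequence} (hence ultimately Theorem~\ref{ud}) is needed.
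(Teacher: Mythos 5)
Your proposal is correct and follows essentially the same route as the paper: the $\mathbb{Z}^k$ Furstenberg correspondence principle to pass from $E$ to $(X,\mathcal{B},\mu,T_1,\dots,T_k)$ and $A$ with $\mu(A)=d^*(E)$, then Corollary~\ref{ergodic sequence} applied along $([p_n^{c_1}],\dots,[p_n^{c_k}])$, and finally the bound $d^*(E\cap(E-\mathbf{v}))\le 1$ (with positivity of the density forcing $\mathbf{v}\in E-E$) to convert the Ces\`aro average into the counting estimate normalized by $\pi(N)$. The only nitpick is your parenthetical claim that the density is nonzero ``exactly when'' $\mathbf{v}\in E-E$ --- only the implication from positive density to membership in $E-E$ is true and needed --- and the reindexing issue you flag is immediate here because the limit in Corollary~\ref{ergodic sequence} actually exists.
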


Before formulating additional results to be proved in this paper we have to introduce some pertinent definitions. (A detailed discussion of various additional notions of sets of recurrence in $\mathbb{Z}^k$ is provided in Section 4.)

\begin{Definition} 
A set $D$  $ \subset \mathbb{Z}^k$ is {\it{a set of nice recurrence}} if given any ergodic measure preserving $\mathbb{Z}^k$-action $T=(T^{\bold{m}})_{(\bold{m} \in \mathbb{Z}^k)}$ on a probability space $(X, \mathcal{B}, \mu)$, any set $A \in \mathcal{B}$ with $\mu(A) > 0$ and any $\epsilon >0$, we have 
$$\mu(A \cap T^{-\bold{d}} A) \geq \mu^2(A) - \epsilon $$
for infinitely many $\bold{d} \in D$.
\end{Definition} 

\begin{Definition}[cf.\cite{BL}, Definition 1.2.1] 
A subset $D$ of $\mathbb{Z}^k \backslash \{0\}$ is {\it{a van der Corput set}} ({\it{vdC set}}) if for any family $(u_{\bold{n}})_{\bold{n} \in \mathbb{Z}^k}$ of complex numbers of modulus $1$ such that 
$$\forall \bold{d} \in D, \,\,  \lim_{N_1, \cdots, N_k \rightarrow \infty} \frac{1}{N_1 \cdots N_k} \sum_{\bold{n} \in \prod_{i=1}^k [0, N_i)} u_{\bold{n}+\bold{d}} \overline{u_{\bold{n}}} = 0$$
we have $$\lim_{N_1, \cdots, N_k \rightarrow \infty} \frac{1}{N_1 \cdots N_k } \sum_{\bold{n} \in \prod_{i=1}^k [0, N_i)} u_{\bold{n}} = 0.$$
 \end{Definition}

The following results are obtained in Sections 4 and 5.

\begin{Theorem}[cf. Theorem \ref{sarkozy type}]
If $\alpha_i$ are positive integers and $\beta_i$ are positive and non-integers, then
$$ D_1 = \{ \left(  (p-1)^{\alpha_1}, \cdots , (p-1)^{\alpha_k}, [(p-1)^{\beta_1}], \cdots , [(p-1)^{\beta_l}]  \right) : \, p \in\mathcal{P} \},$$
and
$$ D_2 = \{ \left(  (p+1)^{\alpha_1}, \cdots , (p+1)^{\alpha_k}, [(p+1)^{\beta_1}], \cdots , [(p+1)^{\beta_l}]  \right) : \, p \in \mathcal{P} \}$$
are vdC sets and also sets of nice recurrence in $\mathbb{Z}^{k+l}$.
\end{Theorem}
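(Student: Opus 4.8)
We argue for $D_1$; the treatment of $D_2$ is identical with $p-1$ replaced by $p+1$. Set $\mathbf{d}(p)=\big((p-1)^{\alpha_1},\dots,(p-1)^{\alpha_k},[(p-1)^{\beta_1}],\dots,[(p-1)^{\beta_l}]\big)\in\mathbb{Z}^{k+l}$. The plan is to deduce both statements — that $D_1$ is a vdC set and that it is a set of nice recurrence in $\mathbb{Z}^{k+l}$ — from the equidistribution result Theorem~\ref{ud}, via the spectral/ergodic machinery of Bergelson and Lesigne that underlies Theorem~\ref{sarkozy type}. Both properties follow once one understands the limiting behaviour, for every $\mathbf{t}=(\mathbf{x},\mathbf{y})\in\mathbb{T}^{k}\times\mathbb{T}^{l}$, of the exponential sum
\[
S_N(\mathbf{t})=\frac{1}{\pi(N)}\sum_{p\le N}e\!\Big(\textstyle\sum_{i=1}^{k}x_i(p-1)^{\alpha_i}+\sum_{j=1}^{l}y_j[(p-1)^{\beta_j}]\Big);
\]
the claim to establish is that $S_N(\mathbf{t})\to 0$ unless $\mathbf{y}=\mathbf{0}$ and every $x_i$ is rational, while for such exceptional ``finitely supported'' $\mathbf{t}$ the sum $S_N(\mathbf{t})$ converges to an explicit Ramanujan-type sum obtained by splitting $p$ into residue classes modulo the common denominator of the $x_i$.

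The evaluation of $S_N(\mathbf{t})$ is where Theorem~\ref{ud} is used. First remove the fractional parts: write $[(p-1)^{\beta_j}]=(p-1)^{\beta_j}-\{(p-1)^{\beta_j}\}$ and expand $e\big(-y_j\{(p-1)^{\beta_j}\}\big)$ in its Fourier series in $(p-1)^{\beta_j}$; applying the standard Vinogradov/Vaaler truncation of the sawtooth's (merely $O(1/|h|)$-decaying) Fourier coefficients turns $S_N(\mathbf{t})$ into a boundedly convergent combination of sums $\frac{1}{\pi(N)}\sum_{p\le N}e(\xi_{\mathbf{h}}(p-1))$, $\mathbf{h}\in\mathbb{Z}^{l}$, where $\xi_{\mathbf{h}}(u)=\sum_i x_i u^{\alpha_i}+\sum_j(y_j+h_j)u^{\beta_j}$, plus an error tending to $0$. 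For each fixed $\mathbf{h}$, expand $(p-1)^{\beta_j}$ binomially about $p$ and discard the uniformly $o(1)$ tail of terms with negative exponent; then $\xi_{\mathbf{h}}(p-1)$ agrees, modulo $1+o(1)$, with a function of $p$ of exactly the shape $\xi(x)=\sum\alpha_j x^{\theta_j}$ treated in Theorem~\ref{ud}. Since every $\beta_j\notin\mathbb{Z}$, unless $\mathbf{y}=\mathbf{0}$ (so that only $\mathbf{h}=\mathbf{0}$ contributes) and $\mathbf{x}\in(\mathbb{Q}/\mathbb{Z})^{k}$, this function either has a non-integer exponent with non-zero coefficient or has all exponents integral but an irrational coefficient, and Theorem~\ref{ud} forces the corresponding average to tend to $0$; summing over $\mathbf{h}$ gives $S_N(\mathbf{t})\to 0$. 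In the exceptional case one is left with $\frac{1}{\pi(N)}\sum_{p\le N}e\big(\sum_i x_i(p-1)^{\alpha_i}\big)$, which depends only on $p$ modulo the common denominator $q$ of the $x_i$ and hence, by the prime number theorem in arithmetic progressions, tends to the asserted Ramanujan-type sum.

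Granting this, one closes the argument exactly as in Bergelson–Lesigne. For the vdC property: given a finite positive measure $\sigma$ on $\mathbb{T}^{k+l}$ with $\sigma(\{\mathbf{0}\})>0$, pass to $\sigma\ast\check\sigma$, for which $\widehat{\sigma\ast\check\sigma}(\mathbf{d}(p))=|\widehat\sigma(\mathbf{d}(p))|^{2}\ge 0$, so that $\frac{1}{\pi(N)}\sum_{p\le N}|\widehat\sigma(\mathbf{d}(p))|^{2}$ converges to $\int L\,d(\sigma\ast\check\sigma)$, where $L$ is the (explicitly computed) pointwise limit of $S_N$; the vanishing of $L$ off the rational skeleton, together with Dirichlet's theorem (a proportion $1/\varphi(q)$ of primes have $q\mid p-1$, so that the atom of $\sigma\ast\check\sigma$ at $\mathbf{0}$ contributes positively), shows this limit is strictly positive, whence $\widehat\sigma(\mathbf{d}(p))\ne 0$ for some $p$. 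For nice recurrence one packages the same information through the mean ergodic theorem: the appropriate extension of Theorem~\ref{ergodic} and Corollary~\ref{ergodic sequence} — allowing the polynomial coordinates $(p-1)^{\alpha_i}$ and the shift $p-1$, again controlled by the divisibility of $p-1$ by arbitrary moduli along positively many primes — gives $\liminf_N\frac{1}{\pi(N)}\sum_{p\le N}\mu(A\cap T^{-\mathbf{d}(p)}A)\ge\mu(A)^{2}$, hence the required inequality for infinitely many $\mathbf{d}\in D_1$.

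The main obstacle will be the bookkeeping in the second paragraph: arranging the two truncations (the Fourier expansion of the sawtooth and the binomial tail of $(p-1)^{\beta_j}$) so that the discarded terms are negligible uniformly in $N$, and keeping precise track — after grouping terms with equal exponents, so that possible cancellations are accounted for — of which linear combinations of the $x_i$ and $y_j$ are genuinely irrational, so that Theorem~\ref{ud} can be applied term by term. This is exactly the situation the equidistribution estimates of Section~2 are designed to handle; the remaining spectral and ergodic arguments are routine, and it is this structure (the divisibility of $p\pm1$ by every modulus along a positive proportion of primes) that explains why only the shifts $p\pm1$ can appear.
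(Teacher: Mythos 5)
Your reduction of everything to the exponential sums $S_N(\mathbf{t})$ and your handling of the floor functions (sawtooth Fourier expansion plus binomial expansion of $(p\pm1)^{\beta_j}$, feeding into Theorem \ref{ud}) is a workable alternative to the paper's route through Proposition \ref{ud lemma}. The genuine problem is in how you close both halves of the argument: you work with the \emph{unrestricted} average over all primes and claim positivity from the atom at $\mathbf{0}$ alone, and this is exactly where the rational part of the spectrum can defeat you. Your limit function $L$ is \emph{not} nonnegative on the rational skeleton (for $\mathbf{y}=\mathbf{0}$, $\mathbf{x}=a/q$ it is a Ramanujan-type sum whose real part is negative for many $q$), so $\int L\,d(\sigma\ast\check\sigma)$ is not bounded below by the mass of the atom at $\mathbf{0}$, and strict positivity does not follow as stated. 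The same defect makes your nice-recurrence claim false as written: it is not true that
\begin{equation*}
\liminf_{N\to\infty}\frac{1}{\pi(N)}\sum_{p\le N}\mu\bigl(A\cap T^{-\mathbf{d}(p)}A\bigr)\;\ge\;\mu(A)^2 .
\end{equation*}
Concretely, take $T_1$ to be rotation by $1$ on $\mathbb{Z}/5\mathbb{Z}$, let the remaining generators act as the identity, and let $A=\{0,1,2\}$; since $p-1$ is equidistributed among $\{0,1,2,3\}$ mod $5$, the average equals $\tfrac14\bigl(\tfrac35+\tfrac25+\tfrac15+\tfrac15\bigr)=\tfrac{7}{20}<\tfrac{9}{25}=\mu(A)^2$. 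So the clean inequality of Corollary \ref{ergodic sequence} does not survive the introduction of the polynomial coordinates $(p\pm1)^{\alpha_i}$; one can only hope for $\mu(A)^2-\epsilon$, and only along a suitable subfamily.

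This is precisely what the paper's proof supplies and what your sketch is missing: the restriction to the subfamily $D_i^{(q!)}$ of elements all of whose coordinates are divisible by $q!$. Along that subfamily the limiting exponential average equals $1$ at every rational point with denominator dividing $q!$ and vanishes at irrationals, so the only uncontrolled contribution comes from the remaining rationals $B_q$, whose $\sigma$-mass tends to $0$ as $q\to\infty$ (Lemma \ref{vdc}); this yields the nice $FC^+$ property, hence both the vdC property and nice recurrence, and in the ergodic formulation it gives \eqref{eqn7} with $r$ chosen according to $(X,T,A,\epsilon)$. Two further ingredients you would still need to make even a repaired version of your argument work are (i) the fact that $D_i^{(r)}$ is infinite, indeed of positive relative density --- for the floor coordinates this is \emph{not} a consequence of Dirichlet's theorem alone but requires the equidistribution of $\bigl((p\pm1)^{\beta_1}/r,\dots,(p\pm1)^{\beta_l}/r\bigr)$ along primes in a fixed progression (Proposition \ref{positive density} via Corollary \ref{ud along}), and (ii) for nice recurrence, the splitting $\mathcal{H}=\mathcal{H}_{rat}\oplus\mathcal{H}_{tot}$ so that the rational spectrum is handled by the divisibility restriction and the totally ergodic part by the equidistribution estimates. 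Your parenthetical remarks show you sense that divisibility of $p\pm1$ is the key, but the argument as written never implements it, and the unrestricted inequalities you assert are false.
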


\newtheorem*{semi ergodic cor}{Corollary \ref{semi ergodic cor}}
\begin{semi ergodic cor}[see Section 5]
Let $D_1$ and $D_2$ be as in Theorem \ref{sarkozy type}.
If $E \subset \mathbb{Z}^{k+l}$ with ${d^*}(E) > 0$, then for any $\epsilon > 0$
\begin{equation*}
 \{ \bold{d} \in D_i : d^*(E \cap E - \bold{d} ) \geq d^*(E)^2 - \epsilon \} 
 \end{equation*}
 has positive lower relative density\footnote{ For sets $A \subset B \subset \mathbb{Z}^m$, the lower relative density of $A$ with respect to $B$ is defined as $$ \liminf_{n \rightarrow \infty} \frac{|A \cap [-n,n]^m|}{|B \cap [-n,n]^m|}.$$ } in $D_i$ for $i=1, 2$.
 Furthermore,
 $$\liminf_{N \rightarrow \infty} \frac{\left| \{ p \leq N :\left(  (p - 1)^{\alpha_1}, \cdots , (p - 1)^{\alpha_k}, [(p - 1)^{\beta_1}], \cdots , [(p - 1)^{\beta_l}]  \right) \in E - E \} \right| }{\pi(N)} > 0,$$
and
  $$\liminf_{N \rightarrow \infty} \frac{\left| \{ p \leq N :\left(  (p + 1)^{\alpha_1}, \cdots , (p + 1)^{\alpha_k}, [(p + 1)^{\beta_1}], \cdots , [(p + 1)^{\beta_l}]  \right) \in E - E \} \right| }{\pi(N)} > 0.$$
\end{semi ergodic cor}

\section{equidistribution}
\label{sec : equidistribution}
The goal of this section is to prove the following simultaneous extension of the results in \cite{Rh} and \cite{ST} and to derive from it some useful corollaries.
\begin{Theorem}
\label{ud}
Let $\xi(x) = \sum_{j=1}^{m} \alpha_j x^{\theta_j}$, where $0 <\theta_1 <
\theta_2 < \cdots < \theta_m$, $\alpha_j$ are non-zero reals and assume that if all
$\theta_j \in \mathbb{Z}^+$, then at least one $\alpha_j$ is irrational. Then
the sequence $ ( \xi(p) )_{p \in \mathcal{P}} $ is u.d. mod $1$. 
\end{Theorem}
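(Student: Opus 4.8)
The plan is to establish uniform distribution via Weyl's criterion: it suffices to show that for every nonzero integer $h$,
\[
\frac{1}{\pi(N)} \sum_{p \le N} \erm\bigl(h \xi(p)\bigr) \to 0 \qquad (N \to \infty),
\]
where $\erm(t) = e^{2\pi i t}$. Absorbing $h$ into the coefficients, we may assume $h = 1$ and prove $\sum_{p \le N} \erm(\xi(p)) = o(\pi(N))$. The standard route is to pass from a sum over primes to a sum over integers weighted by the von Mangoldt function $\Lambda$, i.e. to bound $\sum_{n \le N} \Lambda(n) \erm(\xi(n))$, and then to decompose $\Lambda$ via a combinatorial identity (Vaughan's identity, or Vinogradov's bilinear method) into "Type I" sums $\sum_{d \le D} \sum_{m} a_d \erm(\xi(dm))$ and "Type II" (bilinear) sums $\sum_{d \sim D}\sum_{m \sim M} a_d b_m \erm(\xi(dm))$ with suitable ranges. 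This is exactly the framework of Rhin \cite{Rh} and of the paper of Kolesnik and others cited as \cite{ST}; the novelty here is only that $\xi$ is a \emph{generalized polynomial} with several noninteger exponents, together with the degenerate integer case handled by the irrationality hypothesis.

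The core analytic input is a van der Corput / Weyl-type estimate for exponential sums $\sum_{n \sim M} \erm(\xi(n))$ and, more importantly, for the inner sums appearing after the Type I/II decomposition. For this I would invoke the exponent-pair machinery (or repeated Weyl–van der Corput differencing together with the theory of exponential sums with monomial/polynomial-like phases): after $A$-fold differencing the phase $\xi(dm)$ in the variable $m$, the leading behaviour is governed by the top term $\alpha_m (dm)^{\theta_m}$ (when $\theta_m \notin \ZZ$) or by whichever $\alpha_j$ is irrational (when all $\theta_j \in \ZZ$), and one gets a genuine saving. Concretely: if some $\theta_j \notin \ZZ$, pick the largest such $\theta_j = \theta$; then the $\lceil \theta \rceil$-th difference of $\xi$ is $\asymp c\, x^{\theta - \lceil\theta\rceil}$ with $\theta - \lceil\theta\rceil \in (-1,0)$, a regularly-varying phase to which classical van der Corput estimates (second-derivative test, or an exponent pair) apply and yield power savings in $N$. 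If all $\theta_j \in \ZZ$ and say $\alpha_{j_0} \notin \QQ$, one is in the classical Vinogradov situation for $\sum \Lambda(n) \erm(\text{poly}(n))$ with an irrational leading-or-intermediate coefficient, where Weyl differencing down to a linear phase plus the standard treatment of $\sum \min(M, \|a\|^{-1})$ using the continued fraction of $a$ gives the bound; one must check that the terms of lower degree (including any rational-coefficient ones) do not interfere, which is automatic since differencing kills them or reduces them to controllable shape.

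The steps, in order, are: (1) reduce to Weyl sums over primes and then to $\sum_{n\le N}\Lambda(n)\erm(\xi(n))$; (2) apply Vaughan's identity to split into $O(\log^2 N)$ Type I and Type II sums over dyadic ranges; (3) for Type I sums, sum the inner variable directly using van der Corput's method on the phase $\xi(dm)$ in $m$; (4) for Type II sums, apply Cauchy–Schwarz in $d$ and then van der Corput / differencing in the difference of two values of $m$, reducing to counting solutions of $\|\xi(dm_1) - \xi(dm_2)\|$ small, which is controlled by the nonvanishing of an appropriate derivative of $\xi$; (5) optimize the parameter $D$ (the cutoff in Vaughan's identity) to balance the two contributions and conclude $o(\pi(N))$. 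The main obstacle — and the reason this is not merely a citation — is step (3)–(4) in the \emph{mixed} regime where $\xi$ simultaneously has integer-exponent terms and noninteger-exponent terms of comparable size: one must verify that the relevant high-order derivative of $\xi$ is nonzero and of the right order of magnitude on the dyadic range in question (so that van der Corput applies with a uniform power saving), and that the error terms from the noninteger powers, after differencing, remain negligible against the main saving. Handling the two hypotheses (some $\theta_j\notin\ZZ$ versus all integer with an irrational coefficient) by a single unified differencing-plus-exponential-sum argument, with all estimates uniform as $N\to\infty$, is where the real work lies; everything else is bookkeeping that parallels \cite{Rh} and \cite{ST}.
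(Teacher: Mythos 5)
Your overall strategy is the same as the paper's: Weyl's criterion, passage to $\Lambda$-weighted sums by partial summation, a Vaughan/Heath-Brown type combinatorial identity producing Type I and Type II sums (the paper uses Heath-Brown's identity, via its Lemma on $\sum_n\Lambda(n)F(n)$), direct van der Corput treatment of the Type I sums, Cauchy--Schwarz plus differencing for the Type II sums, and Erd\H{o}s--Tur\'an only to phrase the conclusion as a discrepancy bound. For the degenerate case where all $\theta_j\in\mathbb{Z}^+$ the paper simply cites Rhin, whereas you sketch the classical Vinogradov-style proof; that is fine but unnecessary work.

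There is, however, one concrete misstep in your plan for the mixed regime, which is exactly the regime you single out as the real work. You propose to take the largest non-integer exponent $\theta$ and difference $\lceil\theta\rceil$ times, claiming the resulting phase is $\asymp c\,x^{\theta-\lceil\theta\rceil}$ so that a second-derivative/exponent-pair estimate applies. This fails whenever $\xi$ contains an integer-exponent term of degree exceeding $\theta$ (e.g. $\xi(x)=x^3+\sqrt2\,x^{3/2}$): after $\lceil\theta\rceil$ differencings the surviving polynomial derivative dominates, the phase is of size $\asymp x^{\deg P-\lceil\theta\rceil}$ rather than $x^{\theta-\lceil\theta\rceil}$, and no saving follows from the stated estimate; note also that its coefficients may be rational, so no Vinogradov-type argument rescues it. The paper's resolution (its Lemma \ref{lemma 5}) is to write $\xi=f+P$ with $P$ the full integer-exponent polynomial part, perform Weyl--van der Corput differencing exactly $k=\deg P$ times --- which reduces $P$ to a constant $a_kk!h_1\cdots h_k$, harmless inside $e(\cdot)$ --- and only then apply the $(q+2)$-nd derivative test (Lemma \ref{lemma 2}) to the differenced non-integer-power phase, whose derivatives are $\asymp h_1\cdots h_k F X^{-k-r}$. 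Your argument can be repaired by adopting this two-stage differencing (order $\deg P$ first, then a derivative test of order tied to $\theta_r$), but as written the key estimate in steps (3)--(4) does not hold uniformly over the cases allowed by the hypotheses.
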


The following notation will be used throughout this paper.
\begin{enumerate}
\item $e(x) = \exp (2 \pi i x)$.
\item  $X \ll Y$ (or $X= O(Y)$) means $ X \leq C Y $ for some positive constant $C$.
\item  $X \asymp Y$ for $C_1 X \leq Y \leq C_2 X$ for some positive constants $C_1, C_2$.
\item $f(x) \lll A$ means that for any $\epsilon > 0$, there is a positive constant $C(\epsilon)$ such that 
$$ | f(x) | \leq C(\epsilon) A x^{\epsilon}.$$
\item $\sum_{p\leq N}$ denotes the sum over primes.
\item The von Mangoldt function is defined as
$$\Lambda(n) = \begin{cases}
  \log p & \text{if $n = p^k$ for some prime $p$ and integer $k \geq 1$} \\
  0 & \text{otherwise} 
\end{cases}$$
\item For $s \in \mathbb{N}$, the $s$-fold divisor function is defined as
\[
d_s(n)=\sum_{n_1\cdots n_s=n}1,
\]
where the sum is extended over all products with $s$ factors.
\end{enumerate}

Before giving the proof of Theorem \ref{ud} we formulate some necessary auxiliary results. We start with the classical Weyl - van der Corput inequality.
\begin{Lemma}[{cf. \cite[Lemma 2.7]{GK}}]
\label{lemma 1}
Let $k$ be a positive integer and $K = 2^k$. Assume that $X, X_1 \in
\mathbb{N}$ and $X < X_1 < 2X$. For any positive $H_1, \cdots , H_k \ll X_1 -
X$ and 
\[S = \left| \sum\limits_{X \leq x \leq X_1 } e(f(x)) \right|\]
we have
\begin{multline*}
\left( \frac{S}{X_1 -X}\right)^K \leq 8^{K-1} \left\{  \frac{1}{H_1^{K/2}} +
  \frac{1}{H_2^{K/4}} + \cdots + \frac{1}{H_k}\right.\\
\left.+ \frac{1}{H_1 \cdots H_k (X_1 -
    X)} \sum_{h_1 = 1}^{H_1} \cdots \sum_{h_k = 1}^{H_k}  \left|  \sum_{x \in
      I(\underline{h})} e(f_1(x)) \right| \right\},
\end{multline*}
where $f_1(x) :=
f(\underline{h}, x) = h_1 \cdots h_k \int_0^1 \cdots \int_0^1
\frac{\partial^k}{\partial x^k} f(x+ \underline{h} \cdot \underline{t} ) \, d
\underline{t}$, $\underline{h} = (h_1, \cdots , h_k)$, $\underline{t} = (t_1,
\cdots , t_k)$ and $I(\underline{h}) = (X, X_1 - h_1 - \cdots - h_k]$.  
\end{Lemma}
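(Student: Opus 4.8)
The plan is to prove the inequality by induction on $k$, the base case $k=1$ being the classical van der Corput $A$-process. For $k=1$ set $z_x=e(f(x))$ for integers $x\in[X,X_1]$ and $z_x=0$ otherwise. Writing $H_1\sum_x z_x=\sum_x\sum_{j=0}^{H_1-1}z_{x+j}$, applying the Cauchy--Schwarz inequality in $x$ over the at most $X_1-X+H_1$ relevant values, expanding the square, and grouping terms according to $h=j-j'$, the diagonal contributes $H_1(X_1-X+1)$ and the off-diagonal terms combine in conjugate pairs into $2\,\mathrm{Re}\sum_x z_{x+h}\overline{z_x}=2\,\mathrm{Re}\sum_{x\in I(h)}e\bigl(f(x+h)-f(x)\bigr)$, the weight $H_1-h$ being bounded by $H_1$. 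Since $f(x+h)-f(x)=h\int_0^1 f'(x+ht)\,dt$, this inner phase is exactly $f_1$ in the case $k=1$. Using $X<X_1<2X$ and $H_1\le X_1-X$ to bound $X_1-X+H_1$ and $X_1-X+1$ by $2(X_1-X)$, one arrives at
\[
\Bigl(\frac{S}{X_1-X}\Bigr)^{2}\le 4\Bigl(\frac{1}{H_1}+\frac{1}{H_1(X_1-X)}\sum_{h_1=1}^{H_1}\Bigl|\sum_{x\in I(h_1)}e(f_1(x))\Bigr|\Bigr),
\]
which is the asserted inequality for $k=1$, in fact with the constant $4$ rather than $8^{K-1}=8$; I keep this sharper constant, since it is what makes the induction close.

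For the inductive step, assume the statement for $k-1$ with exponent $K/2=2^{k-1}$ and parameters $H_2,\dots,H_k$. Applying the case $k=1$ with parameter $H_1$ produces the bound above with inner sums $S_{h_1}=\sum_{x\in I(h_1)}e(g_{h_1}(x))$, where $g_{h_1}(x)=h_1\int_0^1\partial_x f(x+h_1t_1)\,dt_1$ and $I(h_1)=(X,X_1-h_1]$ has length at most $X_1-X$. Raise both sides to the power $K/2$, using $(a+b)^{K/2}\le 2^{K/2-1}(a^{K/2}+b^{K/2})$ and the power-mean inequality $\bigl(H_1^{-1}\sum_{h_1}|S_{h_1}|\bigr)^{K/2}\le H_1^{-1}\sum_{h_1}|S_{h_1}|^{K/2}$, and then apply the inductive hypothesis to each $S_{h_1}$ (an exponential sum over an interval shorter than $X_1-X$, with smooth phase $g_{h_1}$) with the parameters $H_2,\dots,H_k$. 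By Fubini's theorem the $(k-1)$-fold iterated phase produced this way is
\[
h_1\cdots h_k\int_0^1\!\!\cdots\!\int_0^1\frac{\partial^k}{\partial x^k}f\bigl(x+\underline h\cdot\underline t\bigr)\,d\underline t=f_1(x),
\]
and the range of summation contracts to $I(\underline h)=(X,X_1-h_1-\cdots-h_k]$. Collecting the terms yields a bound of the required shape: an error term $H_1^{-K/2}$, intermediate terms $H_2^{-K/4},\dots,H_k^{-1}$, and the $k$-fold exponential sum.

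The step I expect to demand real care is the bookkeeping of the constants, i.e.\ checking that every piece fits under the single factor $8^{K-1}$. Carrying the base constant $4$ through the inductive step, the coefficient of $H_1^{-K/2}$ comes out as $4^{K/2}\cdot2^{K/2-1}=2^{3K/2-1}\le 8^{K-1}$; the coefficients of the intermediate terms $H_j^{-K/2^j}$ ($2\le j\le k$) come out as $4^{K/2}\cdot2^{K/2-1}\cdot8^{K/2-1}=2^{3K-4}$, comfortably below $8^{K-1}=2^{3K-3}$; and the coefficient of the $k$-fold exponential sum equals $2^{3K-4}$ times the factor (at most $2$) lost when the interval length $X_1-h_1-X$ of $S_{h_1}$ is replaced by $X_1-X$, hence is exactly $8^{K-1}$. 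Starting the induction from the weaker constant $8$ instead of $4$ would make these coefficients overshoot $8^{K-1}$, so the sharp base case is essential. Finally one should record that both the length replacement above and the non-emptiness of the ranges $I(\underline h)$ require $H_1+\cdots+H_k\le\tfrac12(X_1-X)$ — which one may freely assume, and which is the reading one should attach to the hypothesis $H_j\ll X_1-X$ in this context.
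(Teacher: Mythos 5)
The paper itself offers no proof of this statement---it is quoted verbatim (up to notation) from \cite{GK}, Lemma 2.7---so there is no internal argument to compare with; what you have written is the standard proof of the iterated Weyl--van der Corput inequality, and it is essentially correct. Your base case is the usual $A$-process (Cauchy--Schwarz after smoothing over a window of length $H_1$), and your induction---raise to the power $K/2$, use $(a+b)^{K/2}\le 2^{K/2-1}(a^{K/2}+b^{K/2})$ and Jensen to move the power inside the average over $h_1$, apply the $(k-1)$-case to each shifted sum, and identify the iterated first differences with the $k$-fold integral form of $f_1$ via the mean-value representation and Fubini---is exactly the right mechanism, and your constant bookkeeping ($2^{3K/2-1}$ for the $H_1$-term, $2^{3K-4}$ for the intermediate terms, $8^{K-1}$ for the $k$-fold sum) checks out. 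Two details deserve explicit mention. First, the correlation sums in the base case run over $x\in[X,X_1-h_1]$ while $I(h_1)=(X,X_1-h_1]$ omits $x=X$, and the analogous one-point discrepancy recurs at each level of the induction; this costs nothing, because the correction $\sum_{h\le H_1-1}(H_1-h)\le H_1(H_1-1)$ fits into the slack $(X_1-X+1)+(H_1-1)=X_1-X+H_1\le 2(X_1-X)$, so the sharp base constant $4$ that your induction requires does survive---but since your final constant comes out to be exactly $8^{K-1}$, you should record this absorption rather than silently equating the two ranges. Second, your reading of ``$H_j\ll X_1-X$'' as $H_1+\cdots+H_k\le\tfrac12(X_1-X)$ is a fair interpretation of an admittedly loosely stated hypothesis (and it propagates to the shortened intervals because $h_1\le H_1$); it is comfortably satisfied in the only place the paper invokes the lemma, namely the proof of Lemma \ref{lemma 5}, where $H_i=X/(2K)$ on an interval of length $X/2$, and in that application only the shape of the bound, not the constant $8^{K-1}$, is ever used.
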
 

The next lemma provides a useful estimate for polynomial-like functions.
\begin{Lemma}[{\cite[Theorem 2.9]{GK}}]
\label{lemma 2}
Let $q \geq 0$ be an integer and $X \in \mathbb{N}$. Suppose that $f(x)$ has
$(q+2)$ continuous derivatives on an interval $I \subset (X, 2X]$. Assume also
that there is some constant $G$ such that $|f^{(r)}(x)| \asymp G X^{-r}$ for
$r=1, \dots , q+2$. Then $$ S := \left| \sum_{x \in I} e(f(x)) \right| \ll
G^{\frac{1}{4Q-2}} X^{1- \frac{q+2}{4Q-2}} + \frac{X}{G},$$ where $Q=2^q$ and
the implied constant in $\ll$ depends only on $q$ and on the implied constants
in $\asymp$.
\end{Lemma}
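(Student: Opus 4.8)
\textbf{Proof proposal for Theorem \ref{ud}.}

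The plan is to verify Weyl's criterion: it suffices to show that for every nonzero integer $\ell$,
\[
\frac{1}{\pi(N)}\sum_{p\leq N} e\bigl(\ell\,\xi(p)\bigr)\to 0
\]
as $N\to\infty$. Replacing $\xi$ by $\ell\xi$ (which preserves the hypotheses: scaling by a nonzero integer keeps each $\alpha_j$ nonzero and keeps irrationality of an $\alpha_j$ in the all-integer case), it is enough to treat $\ell=1$. Passing from a sum over primes to a sum weighted by the von Mangoldt function via partial summation, the goal becomes
\[
\sum_{n\leq N}\Lambda(n)\, e\bigl(\xi(n)\bigr)=o(N).
\]

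The main step is a Vaughan-type decomposition of $\sum_{n\leq N}\Lambda(n)e(\xi(n))$ into ``type I'' sums $\sum_{m\leq M}\sum_{r} a_m\, e(\xi(mr))$ with $a_m\lll 1$ and $r$ running over an interval, and ``type II'' (bilinear) sums $\sum_{m\asymp M}\sum_{r\asymp R} a_m b_r\, e(\xi(mr))$ with $M,R$ in a controlled range and $a_m,b_r\lll 1$. This is standard once $N$ is split dyadically and the inner variable is localized to a dyadic block $X<x\leq X_1<2X$. For the type I sums, after summing the inner variable we face exponential sums $\sum_{X<x\leq X_1}e(f(x))$ with $f(x)=\xi(cx)$ for a fixed coefficient $c$; here I apply Lemma \ref{lemma 2}, noting that for $f(x)=\xi(cx)$ one has $|f^{(r)}(x)|\asymp G X^{-r}$ on a suitable sub-block with $G\asymp |\alpha_j|\,(cX)^{\theta_j}$ coming from the dominant term, provided $\theta_m$ is not a positive integer or, if all $\theta_j$ are integers, after a further argument using the irrational coefficient. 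Choosing $q$ large enough relative to $\theta_m$ gives a power-saving bound $\ll X^{1-\delta}$. For the type II sums I first apply Lemma \ref{lemma 1} (Weyl--van der Corput with $k$ differencings, $K=2^k$) to the inner sum over $x=mr$ in the variable $r$; the $k$-th ``derivative'' $f_1(x)$ built from $\frac{\partial^k}{\partial x^k}\xi(mx)$ is again a generalized polynomial whose leading behavior is $\asymp m^{\theta_m}x^{\theta_m-k}$, so that after the differencing Lemma \ref{lemma 2} (or repeated van der Corput) applies to the resulting sum, and one optimizes over $H_1,\dots,H_k$ and the dissection parameter $M$ to absorb the diagonal contribution $h_1=\dots=h_k=0$ and obtain a saving.

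The delicate point, and the one I expect to be the main obstacle, is the \emph{all-integer} case $\theta_1,\dots,\theta_m\in\mathbb{Z}^+$ with some $\alpha_{j_0}$ irrational: here $\xi$ is a genuine polynomial with at least one irrational coefficient, the derivative sizes $G$ in Lemma \ref{lemma 2} grow polynomially in $X$ and give good bounds only when the relevant coefficient combinations are not too close to rationals with small denominator, so one must run a minor-arcs/major-arcs dichotomy on the leading irrational coefficient — essentially Vinogradov's method for $\sum_p e(P(p))$ — rather than relying on Lemma \ref{lemma 2} alone. Concretely, if $\alpha_{j_0}$ has a rational approximation $a/q$ with $q$ in a wide range then the exponential sum is small by Weyl-shift/van der Corput estimates, while if no such approximation exists with moderate $q$ one can still save via Dirichlet; balancing these cases against the density $\pi(N)\sim N/\log N$ gives $o(N)$. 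Once both cases are handled, summing the $O((\log N)^{A})$ dyadic and type I/II pieces, each of which is $O(N^{1-\delta'}(\log N)^{B})$ for some $\delta'>0$, yields $\sum_{n\le N}\Lambda(n)e(\xi(n))=o(N)$, and Weyl's criterion finishes the proof.
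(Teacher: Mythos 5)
Your write-up does not address the statement you were asked to prove. The statement is Lemma~\ref{lemma 2}, i.e.\ the $q$-th derivative test of Graham and Kolesnik (\cite[Theorem 2.9]{GK}): for $f$ with $q+2$ continuous derivatives on $I\subset(X,2X]$ satisfying $|f^{(r)}(x)|\asymp GX^{-r}$, one has $\sum_{x\in I}e(f(x))\ll G^{1/(4Q-2)}X^{1-(q+2)/(4Q-2)}+X/G$ with $Q=2^q$. What you have written is instead a sketch of how to deduce the equidistribution Theorem~\ref{ud} \emph{from} this lemma (via a Vaughan-type decomposition into type I/II sums, Weyl differencing, and a major/minor arc analysis in the polynomial case). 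Worse, your argument explicitly invokes Lemma~\ref{lemma 2} as a black box in both the type I and type II estimates, so as an attempt at the assigned statement it is circular: nothing in your text bounds $\sum_{x\in I}e(f(x))$ under the derivative hypotheses, which is the entire content of the lemma. A genuine proof would proceed by induction on $q$: the base case $q=0$ combines the first-derivative (Kuzmin--Landau) and second-derivative tests, and the inductive step applies the Weyl--van der Corput inequality (the $A$-process, essentially Lemma~\ref{lemma 1} with one differencing) to $f$, noting that the differenced phase has $(q+1)$-st derivative data of size $hGX^{-1}\cdot X^{-r}$, and then optimizes the differencing parameter $H$; none of these steps appear in your proposal. (For the record, the paper itself does not reprove this lemma either --- it cites \cite{GK} --- so the expected content here is precisely that induction, not a new argument.)

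Even judged as a sketch of Theorem~\ref{ud}, your outline diverges from the paper and leaves the hardest parts unproven: the paper handles the all-integer (polynomial) case by quoting Rhin's theorem \cite{Rh} rather than re-running Vinogradov's major/minor arc machinery, and in the non-integer case it uses Heath-Brown's identity (Lemma~\ref{lemma 3}) together with Lemmas~\ref{lemma 1}, \ref{lemma 2}, \ref{lemma 5} and the Erd\H{o}s--Tur\'an inequality (Lemma~\ref{lemma 4}) to get the discrepancy bound of Proposition~\ref{prop 2}; your ``standard'' type I/II decomposition and the promised power savings are asserted rather than carried out. But the decisive defect is the mismatch of target: you must prove the exponential-sum estimate of Lemma~\ref{lemma 2} itself, without assuming it.
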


We will also need the following estimate involving the von Mangoldt function, the proof of which is based on an identity of Vaughan's type. 

\begin{Lemma}
\label{lemma 3}
Assume $F(x)$ to be any function defined on the real line, supported on $[N/2, N]$
and bounded by $F_0$. Let further $U,V,Z$ be any parameters satisfying $3
\leq U < V < Z < N$, $Z \geq 4U^2$, $N \geq 64 Z^2 U$, $V^3 \geq 32 N$ and
$Z-\frac12\in\mathbb{N}$. Then
$$\left| \sum_n \Lambda(n) F(n)  \right| \ll K
\log N + F_0 +  L (\log N)^8 ,$$
where the summation over $n$ is restricted to
the interval $[N/2,N]$, and $K$ and $L$ are defined by
\begin{align*}
K&=\max_M\sum_{m=1}^\infty d_3(m)\left\vert\sum\limits_{Z<n\leq M} F(mn)\right\vert,\\
L&=\sup\sum_{m=1}^\infty d_4(m)\left\vert\sum\limits_{U < n < V} b(n) F(mn)\right\vert,
\end{align*}
where the supremum is taken over all arithmetic functions $b(n)$ satisfying $|b(n)| \leq d_3(n).$
\end{Lemma}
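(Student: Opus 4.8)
The plan is to insert a Vaughan-type decomposition of $\Lambda$ into $\sum_n\Lambda(n)F(n)$ and to route the resulting terms into ``linear'' (Type~I) sums, in which one variable runs freely over a long interval, and ``bilinear'' (Type~II) sums; the former will be dominated by $K\log N$ and the latter by $L(\log N)^8$, up to a harmless $O(F_0)$ error. Concretely, starting from $\Lambda=\mu*\log=\mu*1*\Lambda$ and splitting $\mu=\mu_{\le U}+\mu_{>U}$, $\Lambda=\Lambda_{\le V}+\Lambda_{>V}$, one arrives at a Vaughan-type identity of the form
\[
\Lambda(n)=(\mu_{\le U}*\log)(n)-(\mu_{\le U}*1*\Lambda_{\le V})(n)+(\mu_{>U}*1*\Lambda_{>V})(n)+\Lambda_{\le V}(n).
\]
Since $F$ is supported on $[N/2,N]$ while the hypotheses force $V<\sqrt{N/192}<N/2$, only $n>V$ occur in $\sum_n\Lambda(n)F(n)$, so the term $\Lambda_{\le V}(n)$ (supported on $n\le V$) disappears, and one is left with the first two terms, which are of Type~I shape (a short divisor variable $\le UV$ times an unrestricted long one, carrying a $\log$-weight or a coefficient bounded by $d_3$), and with the Type~II term $\sum_{mc=n,\,m>U,\,c>V}\Lambda(c)(\mu_{>U}*1)(m)$. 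The various arithmetic restrictions on $U,V,Z$ exist precisely to make the ranges produced here match those in the definitions of $K$ and $L$ (e.g. $V^3\ge 32N$ stops a triple product $\asymp N$ from having all three factors $>V$, while $Z\ge 4U^2$ and $N\ge 64Z^2U$ yield $N>2UVZ$); $Z-\tfrac{1}{2}\in\mathbb{N}$ merely keeps the cut-off $n=Z$ off the integers.

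\emph{Type~I.} Expanding the convolutions, each Type~I piece is $\sum_r c_r\sum_b F(rb)$ with $r\le UV$ and, after pulling out at most one factor $\log N$ (using $\sum_{ac=r}\Lambda(c)=\log r\le\log N$, or $|(\mu_{\le U}*1)(r)|\le d_2(r)$, as appropriate), $|c_r|\le d_3(r)$; the variable $b=n/r$ then ranges over all of $[N/(2r),N/r]$. Since $N>2UVZ$ we have $b>Z$ throughout, so $\sum_b F(rb)=\sum_{Z<b\le N}F(rb)$ is one of the partial sums entering the definition of $K$; hence each Type~I piece, and therefore their sum, is $\ll(\log N)K$.

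\emph{Type~II.} In $\sum_{mc=n}\Lambda(c)(\mu_{>U}*1)(m)$ (with $m>U$, $c>V$) I would replace $\Lambda(c)$ once more by $\sum_{fg=c}\mu(f)\log g$ and remove the weight $\log g\le\log N$, then split each of $m,f,g$ into $\ll\log N$ dyadic blocks of sizes $M,\mathcal F,\mathcal G$ with $M\mathcal F\mathcal G\asymp N$. By $V^3\ge 32N>N$ the three blocks cannot all exceed $V$, so, merging with the $F$-variable every block of size $\le U$ (whose coefficients are $\mu$'s or the divisor sum $(\mu_{>U}*1)$, hence bounded by $d_2$), one is left with a bilinear sum whose non-$F$ variable $n$ lies in $(U,V)$ with $|b(n)|\le d_3(n)$ and whose $F$-variable $m$ carries a coefficient dominated by $d_4(m)$ (using $d_2(m)^2\le d_4(m)$ for the two divisor factors merged into it) --- exactly a sum controlled by $L$. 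The dyadic splittings together with the two removals of a $\log$-weight cost a bounded power of $\log N$, which the bookkeeping pins at $8$; the few sub-cases in which the surviving ``short'' variable is itself $\le U$ leave a long coefficient-free variable and revert to Type~I, again $\ll(\log N)K$. Adding everything up gives the claimed bound.

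\emph{Main obstacle.} The real work is the case analysis in the Type~II step: one must check that \emph{every} term spawned by the identity and its iteration is either genuinely Type~I with long variable $>Z$ (captured by $K$), genuinely Type~II with its non-$F$ variable forced into $(U,V)$ and divisor-bounded coefficients (captured by $L$), or negligibly small ($O(F_0)$) --- and this is exactly what the otherwise mysterious inequalities $Z\ge 4U^2$, $N\ge 64Z^2U$, $V^3\ge 32N$ are engineered to guarantee. Once the ranges are under control, identifying the majorants $d_3$, $d_4$ and the exact exponent $(\log N)^8$ is routine bookkeeping rather than a source of difficulty.
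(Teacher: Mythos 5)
The paper does not prove this lemma from scratch: it is quoted directly from Heath-Brown \cite{Hea}, whose Lemma 2 supplies a decomposition $\sum_n\Lambda(n)F(n)=\Sigma_1+\Sigma_1'-\Sigma_2-\Sigma_2'-\Sigma_3-\Sigma_3'$ and whose Lemma 3 (equations (7) and (8)) supplies the bounds $\Sigma_1,\Sigma_1',\Sigma_2,\Sigma_2'\ll K\log N$ and $\Sigma_3,\Sigma_3'\ll F_0+L(\log N)^8$; the triangle inequality then finishes the proof. Your plan — rederive everything from the plain Vaughan identity and a Type~I/Type~II case analysis — is a genuinely different and more ambitious route, but as written it has a real gap exactly at the point you label ``routine bookkeeping''.

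The gap is in the Type~II case analysis. After writing the Type~II piece as $\sum\Lambda(c)(\mu_{>U}*1)(m)F(mc)$ with $m>U$, $c>V$, and expanding $\Lambda(c)=\sum_{fg=c}\mu(f)\log g$, you claim each dyadic configuration is either $L$-shaped (non-$F$ variable in $(U,V)$, weights $\le d_3$ and $d_4$) or ``reverts to Type~I'', i.e.\ is $K$-shaped with a \emph{coefficient-free} long variable beyond $Z$. But the hypotheses do not force $V>U^2$: in the paper's own application $U\asymp N^{1/5}$ and $V\asymp N^{1/3}$, so $U^2>V$, and the configuration $f,g\le U$, $fg=c>V$, $m>V$ is genuinely possible. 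In that case the only long variable left is $m$, which still carries the coefficient $(\mu_{>U}*1)(m)$ and hence does not fit the definition of $K$, while the complementary variable $fg\in(V,U^2]$ lies outside $(U,V)$ and hence does not fit $L$ either. So ``the few sub-cases in which the surviving short variable is $\le U$ revert to Type~I'' is false as stated, and repairing it requires a differently arranged identity — which is precisely what Heath-Brown's decomposition (where $Z$ enters as an explicit cut in the identity, not only through the inequality $N>2UVZ$) is engineered to provide. Two smaller points: in the $\mu_{\le U}*\log$ term the logarithm sits on the \emph{long} variable, so you need partial summation there — this is where the $\max_M$ in the definition of $K$ is actually used, not a bound $\log r\le\log N$ on the short coefficient; and both the exponent $8$ and the appearance of the $F_0$ term are asserted rather than derived, whereas the lemma fixes them.
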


\begin{proof}
The inequality in question can be easily derived from Lemma 2 and Lemma 3 of
Heath-Brown~\cite{Hea}. The reader should be warned that in \cite{Hea} $F$, $U$, $V$ and $Z$
are denoted by $f$, $u$, $v$ and $z$ and our parameters $N$ and $M$ correspond
to $x$ and $N$, respectively. From Lemma 2 in \cite{Hea} (which is of
combinatorial nature) we immediately obtain the representation:
\[ 
\sum_n \Lambda(n) F(n)=
\Sigma_1+\Sigma'_1-\Sigma_2-\Sigma'_2-\Sigma_3-\Sigma'_3,
\]
where the quantities on the right hand side satisfy the following estimates (see Lemma 3
in \cite{Hea}, Equations (7) and (8)):
\begin{align*}
\Sigma_1,\Sigma'_1,\Sigma_2,\Sigma'_2&\ll K\log N,\\
\Sigma_3,\Sigma'_3&\ll F_0+L(\log N)^8.
\end{align*}
Combining these estimates, the triangle inequality immediately yields our Lemma~\ref{lemma 3}.
\end{proof}

Given a sequence $(x_n)_{n=1}^{\infty}$, its \textit{discrepancy} is defined by
$$D_N(x_n) = \sup_I \left|\frac{ \# \{ n \leq N : x_n \in I  \, (\bmod 1) \} }{N} -(b-a)\right|,$$
where the $\sup$ is taken over all intervals $I = [a, b) \subset [0,1)$.

Note that the sequence $(x_n)$ is u.d. $(\bmod 1)$ if and only if $\lim\limits_{N \rightarrow \infty} D_N(x_n) = 0$.

The proof of Theorem \ref{ud} will be achieved by showing that $\lim_{N \rightarrow \infty} D_N (f(p_n)) =0$. In doing so we will be using the following version of Erd\H os-Tur\'an Inequality.
\begin{Lemma}[cf. {\cite[Theorem 1.21]{DT}, \cite[Theorem 2.5]{KN}}]
\label{lemma 4}
For any real sequence $(x_n)_{n=1}^{\infty}$ and any positive integer $N$ and $H \leq N$, one has:
$$D_N(x_n) \ll \frac{1}{H} + \sum_{h=1}^{H} \frac{1}{h} \left| \frac{1}{N} \sum_{n=1}^N e(h x_n) \right|.$$
\end{Lemma}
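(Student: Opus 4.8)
The plan is to prove this version of the Erd\H os--Tur\'an inequality by the classical route: one approximates the (periodic) characteristic function of an interval from above and below by trigonometric polynomials of degree at most $H$, and then substitutes the points $x_n$. Fix an interval $I=[a,b)\subset[0,1)$, write $\mu(I)=b-a$, and let $\chi$ denote the $1$-periodic extension of the characteristic function of $I$. Since $D_N(x_n)=\sup_I\bigl|\frac1N\sum_{n=1}^N\chi(x_n)-\mu(I)\bigr|$, it suffices to bound $\Delta_I:=\frac1N\sum_{n=1}^N\chi(x_n)-\mu(I)$ by the desired right-hand side \emph{uniformly in} $I$ (and, without loss of generality, one may translate so that $I=[0,\mu(I))$, as translating the $x_n$ only multiplies each $\frac1N\sum_n e(hx_n)$ by a number of modulus $1$).

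The key input is the existence of the Beurling--Selberg--Vaaler extremal polynomials: there are trigonometric polynomials $V_H^{\pm}(x)=\sum_{|h|\le H}\widehat{V_H^{\pm}}(h)\,e(hx)$ of degree at most $H$ with $V_H^{-}(x)\le\chi(x)\le V_H^{+}(x)$ for every real $x$, satisfying $\widehat{V_H^{\pm}}(0)=\mu(I)\pm\frac1{H+1}$ and $\bigl|\widehat{V_H^{\pm}}(h)\bigr|\le\frac1{H+1}+\frac1{\pi|h|}$ for $1\le|h|\le H$, with constants independent of $a,b$. (Because Lemma \ref{lemma 4} asserts only a bound with an unspecified implied constant, one may alternatively use any cruder construction of one-sided trigonometric approximants to $\chi$ --- for instance by convolving suitably dilated trapezoidal majorants and minorants of $\chi$ with the Fej\'er kernel of order $H$ --- which still yields zeroth coefficient $\mu(I)+O(1/H)$ and $h$-th coefficient $O(1/|h|)$.)

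Substituting and isolating the frequency $h=0$, the inequality $\chi\le V_H^{+}$ gives $\Delta_I\le\frac1{H+1}+\sum_{1\le|h|\le H}\bigl|\widehat{V_H^{+}}(h)\bigr|\,\bigl|\frac1N\sum_{n=1}^N e(hx_n)\bigr|$, and $\chi\ge V_H^{-}$ gives the matching lower bound, so
$$|\Delta_I|\le\frac1{H+1}+\sum_{1\le|h|\le H}\left(\frac1{H+1}+\frac1{\pi|h|}\right)\left|\frac1N\sum_{n=1}^N e(hx_n)\right|.$$
Since the $x_n$ are real, $\bigl|\frac1N\sum_n e(-hx_n)\bigr|=\bigl|\frac1N\sum_n e(hx_n)\bigr|$, so the sum over $1\le|h|\le H$ equals twice the sum over $1\le h\le H$; combining this with $\frac1{H+1}+\frac1{\pi h}\le\frac2h$ for $1\le h\le H$ yields $|\Delta_I|\le\frac1{H+1}+4\sum_{h=1}^H\frac1h\bigl|\frac1N\sum_{n=1}^N e(hx_n)\bigr|$. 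The right-hand side is independent of $I$, so taking the supremum over all $I=[a,b)\subset[0,1)$ produces exactly the claimed estimate $D_N(x_n)\ll\frac1H+\sum_{h=1}^H\frac1h\bigl|\frac1N\sum_{n=1}^N e(hx_n)\bigr|$; the hypothesis $H\le N$ plays no essential role beyond excluding the trivial range in which the bound already exceeds $1$.

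The one genuinely nontrivial step is the construction in the second paragraph --- one-sided trigonometric approximants to $\chi_I$ whose zeroth Fourier coefficient is within $O(1/H)$ of $\mu(I)$ while the remaining coefficients are $O(1/|h|)$. This is the Beurling--Selberg extremal problem, whose sharp solution is due to Selberg and Vaaler; for the purposes of Lemma \ref{lemma 4} the weaker Fej\'er-smoothing construction also suffices. Everything after that --- the substitution, the separation of the mean term, the symmetrisation in $h$, and the passage to the supremum over $I$ --- is routine bookkeeping.
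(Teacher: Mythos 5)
Your argument is correct. Note, however, that the paper does not prove Lemma \ref{lemma 4} at all: it is quoted as a classical result, the Erd\H os--Tur\'an inequality, with references to \cite{DT} and \cite{KN}, so there is no internal proof to compare against. What you have written is essentially the standard proof found in those sources (and in Montgomery's book): majorize and minorize the periodized indicator $\chi_I$ by trigonometric polynomials of degree at most $H$ whose zeroth coefficient is $\mu(I)+O(1/H)$ and whose $h$-th coefficients are $O(1/\lvert h\rvert)$ (Selberg--Vaaler extremal functions, or the cruder Fej\'er-smoothing construction, either of which suffices since only an unspecified implied constant is claimed), substitute the points $x_n$, separate the $h=0$ term, symmetrize in $\pm h$, and take the supremum over intervals. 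The coefficient bounds you quote, the estimate $\frac{1}{H+1}+\frac{1}{\pi h}\le\frac{2}{h}$ for $1\le h\le H$, and the resulting bound $\lvert\Delta_I\rvert\le\frac{1}{H+1}+4\sum_{h=1}^{H}\frac{1}{h}\bigl|\frac{1}{N}\sum_{n=1}^{N}e(hx_n)\bigr|$ are all accurate, and you are right that the hypothesis $H\le N$ is not actually needed. So the proposal is a complete and correct self-contained proof of the cited lemma.
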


The following lemma will serve as the central tool in the proof of Theorem \ref{ud}.
 \begin{Lemma}
 \label{lemma 5}
 Let $X,k,q\in \mathbb{N}$ with $k,q\geq 0$ and set $K=2^k$ and $Q= 2^q$. Let $P(x)$ be a polynomial of degree $k$ with real coefficients. Let $f(x)$ be a real $(q+k+2)$ times continuously differentiable function on $[X/2 , X]$ such that $\left| f^{(r)}(x) \right| \asymp F X^{-r}$ $( r = 1, \dots, q+k+2) $.
Then, if $F = o (X^{q+2})$ for $F$ and $X$ large enough, we have
 $$S := \left| \sum_{X/2 < x \leq X} e(f(x) + P(x)) \right| \ll X^{1 - \frac{1}{K}} + X \left( \frac{\log^k X}{F} \right)^{\frac{1}{K}} + X \left( \frac{F}{X^{q+2}} \right)^{\frac{1}{(4KQ-2K)}}.$$
  \end{Lemma}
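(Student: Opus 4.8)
The plan is to combine the Weyl--van der Corput inequality (Lemma \ref{lemma 1}) with the polynomial-like exponential sum estimate (Lemma \ref{lemma 2}). First I would apply Lemma \ref{lemma 1} with the single free parameter choice $H_1=H_2=\cdots=H_k=H$ (so that one differences $k$ times), to the sum $S$ with exponential $f(x)+P(x)$ on the interval $(X/2,X]$. This produces a main term of size $8^{K-1}\{H^{-K/2}+H^{-K/4}+\cdots+H^{-1}\}$, which is $\ll H^{-1}$, plus an averaged term
\[
\frac{1}{H^k X}\sum_{h_1=1}^{H}\cdots\sum_{h_k=1}^{H}\Bigl|\sum_{x\in I(\underline h)}e\bigl(g_{\underline h}(x)\bigr)\Bigr|,
\]
where $g_{\underline h}(x)=h_1\cdots h_k\int_0^1\cdots\int_0^1\frac{\partial^k}{\partial x^k}\bigl(f+P\bigr)(x+\underline h\cdot\underline t)\,d\underline t$. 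The crucial point is that $P$ is a polynomial of degree $k$, so $\frac{\partial^k}{\partial x^k}P$ is a constant and the $k$-fold averaged difference of $P$ contributes only $h_1\cdots h_k\cdot(\text{const})$ to the phase, i.e.\ a term that is linear in $x$ — in fact it is \emph{independent} of $x$ after the $k$-th derivative, so $P$ drops out of the inner sum's oscillation entirely (it shifts $g_{\underline h}$ by a constant, which does not affect $|\sum e(g_{\underline h}(x))|$). Thus the inner sum depends only on $f$.

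Next I would estimate the inner sum $\sum_{x\in I(\underline h)}e(g_{\underline h}(x))$ using Lemma \ref{lemma 2}. For fixed $\underline h$ with $1\le h_i\le H$, the function $g_{\underline h}$ satisfies $g_{\underline h}^{(r)}(x)\asymp h_1\cdots h_k\cdot F X^{-(k+r)}$ for $r=1,\dots,q+2$ — this is where the hypothesis that $f$ has $q+k+2$ continuous derivatives with $|f^{(r)}(x)|\asymp FX^{-r}$ is used, and one needs $q+2$ further derivatives of the $k$-th difference. So in the notation of Lemma \ref{lemma 2} one has $G\asymp h_1\cdots h_k\, F X^{-k}$ and the interval length is $\asymp X$. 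Lemma \ref{lemma 2} (with its $q$, $Q=2^q$) then gives
\[
\Bigl|\sum_{x\in I(\underline h)}e(g_{\underline h}(x))\Bigr|\ll \bigl(h_1\cdots h_k\,F X^{-k}\bigr)^{\frac{1}{4Q-2}}X^{1-\frac{q+2}{4Q-2}}+\frac{X^{k+1}}{h_1\cdots h_k\,F}.
\]
The condition $F=o(X^{q+2})$ guarantees $GX^{-1}\ll X^{-\epsilon}$-type smallness so that the hypothesis "$|f^{(r)}|\asymp GX^{-r}$" of Lemma \ref{lemma 2} is compatible (i.e.\ $G$ not too large), and also ensures the first term dominates the regime we care about.

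Then I would substitute this bound back, sum over $h_1,\dots,h_k$ (using $\sum_{h=1}^H h^{\frac{1}{4Q-2}}\ll H^{1+\frac{1}{4Q-2}}$ for the first piece and $\sum_{h\le H}h^{-1}\ll\log H\ll\log X$ for the second, the latter raised to the $k$-th power giving $\log^k X$), divide by $H^k X$, take the $K$-th root, and finally optimize over $H$. Collecting terms, the $H^{-1/K}$ from the main term of Lemma \ref{lemma 1}, the term of shape $X\cdot(F/X^{q+2})^{1/(4KQ-2K)}$ coming from the first piece of Lemma \ref{lemma 2}, and the term $X(\log^k X/F)^{1/K}$ coming from the second piece of Lemma \ref{lemma 2}, will be balanced; choosing $H\asymp X$ (or a small power of $X$ as forced by the constraint $H\ll X$ from Lemma \ref{lemma 1}) makes the first contribution $\ll X^{1-1/K}$, yielding exactly the three stated terms. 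The main obstacle is bookkeeping: correctly tracking the exponents through the $k$-fold differencing and the $K$-th root, verifying that the $h_i$-dependence in $G$ combines to give precisely the powers that, after summation and normalization by $H^kX$, produce clean powers of $X$ and $F$ without leftover $H$-dependence in the second and third terms, and checking that the smallness hypothesis $F=o(X^{q+2})$ is exactly what is needed to keep Lemma \ref{lemma 2} applicable (its error term $X/G$ under control) and to absorb the polynomial $P$'s contribution.
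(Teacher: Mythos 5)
Your proposal is correct and follows essentially the same route as the paper's proof: difference $k$ times via Lemma \ref{lemma 1} with $H_i\asymp X$ (the paper takes $H_i=X/(2K)$), observe that the degree-$k$ polynomial $P$ contributes only a constant to the phase after $k$-fold differencing, estimate the inner sums by Lemma \ref{lemma 2} with $G\asymp h_1\cdots h_k FX^{-k}$, and then sum over the $h_i$ and take the $K$-th root to obtain the three stated terms.
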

\begin{proof}
 Using Lemma \ref{lemma 1} with 
 $H_i = \frac{X}{2K}$, we obtain
 $$\left| \frac{S}{X} \right|^K \ll \frac{1}{H_k} + \frac{1}{H_1 \cdots H_k X} \sum_{h_1 = 1}^{H_1} \cdots \sum_{h_k=1}^{H_k} \left| \sum_{x \in I (\underline{h})} e(f_1(x)) \right| ,$$
 where $I (\underline{h}) = ( X/2 , X - h_1 - \cdots - h_k ]$ and
\[f_1(x)
 := f_1 (\underline{h}, x ) = h_1 \cdots h_k \left[ \int_0^1 \cdots
   \int_0^1 \frac{\partial^k}{ \partial x^k} f(x + \underline{h} \cdot
   \underline{t}) d \, \underline{t} + a_k k! \right],
\]where $a_k$ is the
 leading coefficient of $P(x)$. The function $f_1(x)$ satisfies the conditions
 of Lemma \ref{lemma 2} with $G = h_1 \cdots h_k F/ X^k$. Thus its application
 yields
 \begin{eqnarray*}
 \left| \frac{S}{X}\right|^K &\ll& \frac{1}{X} + \frac{1}{H_1 \cdots H_k X} \sum_{h_1 =1}^{H_1} \cdots \sum_{h_k=1}^{H_k} \left( F^{\frac{1}{4Q-2}} X^{1- \frac{q+2}{4Q-2}} + \frac{X^{k+1}}{F h_1 \cdots h_k } \right) \\
 &\ll& \frac{1}{X} + \left( \frac{F}{ X^{q+2}} \right)^{\frac{1}{4Q-2}} + \frac{\log^k X}{F}.
 \end{eqnarray*}
 This proves the Lemma. 
\end{proof}

\begin{Remark}
Using a better choice of parameters $\underline{H} = (H_1, \cdots , H_q)$, we can easily improve the estimate in Lemma \ref{lemma 5} but since our aim is to prove uniform distribution, the obtained estimate will be sufficient.
\end{Remark}

\begin{Proposition}
\label{prop 1}
Let $P(x)$ be a polynomial of degree $k$ and $f(x) = \sum_{j=1}^r d_j x^{\theta_j}$ with $r \geq 1$, $d_r \ne 0$, $d_j$ real, $0 < \theta_1 < \theta_2 < \cdots < \theta_r$  and $\theta_j \notin \mathbb{Z}^{+}$. Assume that $l < \theta_r < l+1$ for some $l$. 
Let $1 \leq |m| \leq N^{1/10}$. Then
$$\left| \sum_{p \leq N} e(m f(p) + m P(p))\right| \lll N^{1- \frac{1}{3K}} + \frac{N }{ (m N^{\theta_r})^{1/K} } + N^{1- \frac{1}{64KL^5 - 4K}} + N^{1-1/10},$$
where $K=2^k$ and $L=2^l$.
\end{Proposition}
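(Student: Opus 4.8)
The plan is to estimate the exponential sum $\sum_{p\le N} e(mf(p)+mP(p))$ by passing to a von Mangoldt weighted sum, applying Lemma~\ref{lemma 3} (the Vaughan/Heath-Brown decomposition) to reduce it to bilinear sums of Type I and Type II, and then bounding the resulting inner sums over arithmetic progressions via Lemma~\ref{lemma 5}. First I would perform a dyadic decomposition, writing $\sum_{p\le N}$ as a sum of $O(\log N)$ pieces over ranges $(N'/2,N']$; it suffices to bound each piece by $N'^{1-1/(3K)}+\cdots$ with an $N^{\epsilon}$ loss, since $\theta_r$ and hence the local behavior of $f$ scales predictably. On each dyadic block, replace the sum over primes by $\frac{1}{\log N}\sum_{n\sim N'}\Lambda(n)e(mf(n)+mP(n))$ at the cost of the usual error from prime powers $p^k$, $k\ge 2$, which contribute $O(\sqrt N)$ and are absorbed into the $N^{1-1/10}$ term.

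Next I would apply Lemma~\ref{lemma 3} with $F(n)=e(mf(n)+mP(n))\mathbf{1}_{[N/2,N]}(n)$, so $F_0=1$, and with parameters $U,V,Z$ chosen as powers of $N$ — e.g. $U\asymp N^{1/10}$, $V\asymp N^{1/3}$, $Z\asymp N^{2/5}$ or similar, adjusted so the hypotheses $Z\ge 4U^2$, $N\ge 64Z^2U$, $V^3\ge 32N$, $Z-\tfrac12\in\mathbb{Z}$ hold. This reduces matters to bounding the Type I sums inside $K=\max_M\sum_m d_3(m)\bigl|\sum_{Z<n\le M}F(mn)\bigr|$ and the Type II sums inside $L=\sup\sum_m d_4(m)\bigl|\sum_{U<n<V}b(n)F(mn)\bigr|$. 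For both, the key point is that for fixed $m$ the inner function $x\mapsto mf(mx)+mP(mx)$ has derivatives of the right order of magnitude: since $\theta_j\notin\mathbb{Z}^+$ and $l<\theta_r<l+1$, the top term $md_r m^{\theta_r}x^{\theta_r}$ never has a vanishing derivative, giving $|g^{(r)}(x)|\asymp F(x/m)^{-r}$ with $F\asymp m^{1+\theta_r}(N/m)^{\theta_r}=mN^{\theta_r}$ (up to the polynomial part being swallowed into $P$). Then Lemma~\ref{lemma 5} — applied with $q=l$, so $Q=2^l=L$ — gives, for each $m$, a bound of shape $(N/m)^{1-1/K}+(N/m)\bigl(\log^k N/(mN^{\theta_r})\bigr)^{1/K}+(N/m)\bigl(mN^{\theta_r}/N^{l+2}\bigr)^{1/(4KL-2K)}$, and for the $L$-sum one uses the additional $1/K$ saving from the bilinear structure together with Cauchy–Schwarz, which is exactly why the final exponent reads $1/(64KL^5-4K)$ rather than $1/(4KL-2K)$ (the extra factor $L^4=Q^2$ and the numerology $64=2^6$ come from iterating van der Corput once more inside the Type II estimate). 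Summing over $m$ against $d_3(m)$ or $d_4(m)$ costs only $N^{\epsilon}$ by standard divisor-sum bounds (hence the $\lll$ notation).

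Finally I would collect the contributions: the Type I term yields $N^{1-1/(3K)}$ after the dyadic sum (the $1/(3K)$ rather than $1/K$ accounts for the range restriction $n>Z=N^{2/5}$ shrinking the effective interval and the divisor-sum losses), the "main term" part of Lemma~\ref{lemma 5} gives $N/(mN^{\theta_r})^{1/K}$ summed down to $m\ge 1$, i.e. $N/(N^{\theta_r})^{1/K}$, the Type II term gives $N^{1-1/(64KL^5-4K)}$, and the prime-power and the choice $|m|\le N^{1/10}$ contribute $N^{1-1/10}$. Combining these four is exactly the claimed estimate.

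\textbf{Main obstacle.} The crux is verifying the derivative condition $|g^{(r)}(x)|\asymp F x^{-r}$ uniformly in $m$ for the dilated phase $g(x)=mf(mx)+mP(mx)$ on the relevant subintervals, and tracking precisely how the parameters $U,V,Z$ in Lemma~\ref{lemma 3} must be balanced against the exponent $\theta_r$ so that all of Lemma~\ref{lemma 3}'s hypotheses and Lemma~\ref{lemma 5}'s hypothesis $F=o(X^{q+2})$ hold simultaneously — in particular the polynomial part $P$ of degree $k$ must be handled by absorbing its top coefficient into the "$P(x)$" slot of Lemma~\ref{lemma 5} (via the $a_k k!$ term there) rather than into $f$, since $P$ could interact with $f$ only through lower-order derivatives. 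The bookkeeping that produces the specific constant $64KL^5-4K$ from one extra van der Corput step inside the Type II sum is the other delicate point, but it is routine once the setup is fixed.
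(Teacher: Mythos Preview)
Your overall strategy --- dyadic decomposition, pass to a $\Lambda$-weighted sum via partial summation, apply Lemma~\ref{lemma 3}, then bound the inner sums with Lemma~\ref{lemma 5} --- is exactly the paper's approach. But the application of Lemma~\ref{lemma 5} with $q=l$ would fail, and this error propagates into your explanation of where the exponent $1/(64KL^5-4K)$ comes from.

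The hypothesis of Lemma~\ref{lemma 5} requires $F=o(X^{q+2})$, where $X$ is the length of the interval carrying the inner sum. In the Type~I sum $K$ of Lemma~\ref{lemma 3} the inner variable $n$ satisfies $n>Z$ and $xn\asymp X$, so the inner interval has length $Y=X/x$, which can be as small as $Z\asymp X^{2/5}$. With $F\asymp mX^{\theta_r}$, $|m|\le N^{1/10}$ and $\theta_r<l+1$, the condition $F=o(Y^{q+2})$ forces $q+2$ to exceed roughly $\tfrac{5}{2}(l+1)$; the paper accordingly takes $q=5l+3$, so that $Q=2^q=8L^5$, and it is precisely this large $Q$ in the Type~I estimate that produces the denominator $4KQ-2K=32KL^5-2K$ and hence (after the extra factor $\tfrac12$ from $F/Y^{q+2}\ll X^{-1/2}$) the exponent $1/(64KL^5-4K)$. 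With your choice $q=l$ the hypothesis $F=o(Y^{l+2})$ is simply false once $Y$ is near $Z$, so Lemma~\ref{lemma 5} does not apply.

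Consequently your attribution of the constant $64KL^5-4K$ is inverted: it does not arise from ``iterating van der Corput once more inside the Type~II estimate'' but from the Type~I estimate with the large $q$. In the Type~II sum, after Cauchy--Schwarz in the outer variable, one obtains a differenced phase $g(x)=m(f(xy_1)-f(xy_2))$ on an interval of length $X_1\ge X/V\asymp X^{2/3}$; here a much smaller $q$ (of order $2l$) already satisfies $F=o(X_1^{q+2})$, and the resulting Type~II exponent is strictly better than the Type~I one and gets absorbed. Relatedly, the term $N^{1-1/(3K)}$ in the statement comes from the Type~II contribution --- specifically from the $X_1^{1-1/K}$ piece of Lemma~\ref{lemma 5} with $X_1\ge X^{2/3}$ --- not from Type~I as you suggest; the first term of Lemma~\ref{lemma 5} on the Type~I side gives the sharper $X^{1-2/(5K)}$.

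Two smaller points: the paper takes $U\asymp X^{1/5}$ (not $N^{1/10}$), and you should keep the coefficient $m$ from the statement separate from the outer bilinear variable --- in your displayed bound the denominator $N^{l+2}$ should be $(N/x)^{q+2}$ with $x$ the outer variable. Once you enlarge $q$ appropriately in each bilinear regime, the rest of your outline goes through and matches the paper.
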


\begin{proof}
We split the sum $S$ into $\leq \log N$ subsums of the form $\left| \sum\limits_{X \leq p \leq 2X} e(m f(p) + mP(p))\right|$ with $2X \leq N$ and evaluate a typical one of them. We can obviously assume that $X \geq N^{9/10}$. By using partial summation formula we obtain
\begin{eqnarray*}
S &:=& \left| \sum_{X \leq p \leq 2X} e(m f(p) + m P(p)) \right| \\
&=& \left| \sum_{n} \frac{\Lambda(n)}{\log n} e(m f(n) + mP(n))  \right| + O(\sqrt{X}) \\
&\ll& \frac{1}{\log X} \left| \sum_{n \in I} \Lambda(n) e(mf(n) + mP(n)) \right| +O(\sqrt{X}),
\end{eqnarray*}
where $I$ is a subinterval of $(X, 2X]$. Denote the last sum by $S_1$ and use
Lemma \ref{lemma 3} with $U = \frac{1}{4} X^{1/5}$, $V= 4 X^{1/3}$ and $Z$ the
unique number in $\frac12+\mathbb{N}$, which is closest to $\frac{1}{4}
X^{2/5}$. We obtain
\begin{equation*}
\begin{split}
S_1& \ll 1 + \log X \left| \sum_{x < \frac{2X}{Z}} d_3(x) \sum_{y > Z, \frac{X}{x} < y < \frac{2X}{x}} e(mf(xy) + mP(xy)) \right| \\
&+ \log^8 X \left| \sum_{x} d_4(x) \sum_{U < y < V, \frac{X}{x} < y \leq \frac{2X}{x}} b(y) e(mf(xy) + mP(xy))\right|.
\end{split}
\end{equation*}
Denote the first sum by $S_2$ and the second sum by $S_3$. To evaluate $S_2$, we use Lemma \ref{lemma 5} to estimate, for a fixed $x$, the sum over $y$. Here (denoting $Y = \frac{X}{x}$) we have 
$$\left| \frac{\partial^j f(xy)}{\partial y^j} \right| \asymp  X^{\theta_r } Y^{-j} $$ 
for any $j$. Furthermore for $j\geq 5(l+1)$ we have
$$\left| m \frac{\partial^j f(xy)}{\partial y^j} \right| \ll m X^{\theta_r - \frac{2}{5}j} \ll X^{\frac{1}{10} + l +1 - \frac{2}{5}j} \leq X^{-1/2}, $$
where we have used that $y>Z\gg X^{2/5}$.
Thus an application of Lemma \ref{lemma 5} yields the following estimate:
\begin{eqnarray*}
S_2 &\lll& \sum_{x \leq 2X/Z} X/x \left[ (\frac{x}{X})^{\frac{1}{K}} + (\frac{1}{m X^{\theta_r}})^{\frac{1}{K}} + X^{-\frac{1}{2} \frac{1}{4K \cdot 8L^5 - 2K}} \right] \\
&\lll& X \left(X^{-\frac{2}{5K}} + \frac{1}{(mX^{\theta_r})^{\frac{1}{K}}} + X^{- \frac{1}{64KL^5-4K} } \right). 
\end{eqnarray*}
Now we need to estimate $S_3$:
$$S_3 \lll \sum_{\frac{X}{V} < x \leq \frac{2X}{U}} \left| \sum_{\substack {U < y < V \\ \frac{X}{x} < y \leq \frac{2X}{x}}} b(y) e(m f(xy) + m P(xy) ) \right| . $$
We split the interval $( \frac{X}{V} , \frac{2X}{U} ]$ into $\leq \log X$ subintervals of the form $I = (X_1, 2X_1]$ and take one of them. Denote the corresponding sum by $S_4$ and use Cauchy's inequality :
\begin{equation*} \begin{split} |S_4|^2 &\leq X_1 \sum_{x \in I} \left| \sum_y b(y) e(m f(xy) + m P(xy)) \right|^2\\
&\ll X_1^2 \frac{X}{X_1} + X_1 \left| \sum_{x \in I} \sum_{A < y_1 < y_2 \leq B} b(y_1) \overline{b(y_2)} e(m ( f(xy_1) - f(xy_2) + P(xy_1) - P(xy_2)))  \right|, 
\end{split} \end{equation*}
where $A = \max \{U, \frac{X}{x} \} $ and $B = \min \{U, \frac{2X}{x} \}$. Changing the order of summation, we get  
$$|S_4|^2 \lll X X_1 + X_1 \sum_{y_1, y_2} \left| \sum_x e(m (f(xy_1) -f(xy_2) + P(xy_1) -P(xy_2))\right| .$$
Now we fix $y_1$ and $y_2 \ne y_1$. The function $g(x) := m(f(xy_1) - f(xy_2))$ satisfies the conditions of Lemma \ref{lemma 5}: 
$$|g^{(j)}(x)| \asymp m \frac{| y_1 -y_2 |}{y_1} X^{\theta_r} X_1^{-j} \ll m X^{\theta_r} \left( \frac{X}{V} \right)^{-j} \ll X^{\theta_r + \frac{1}{10} - \frac{2}{3}j} \ll X^{-\frac{1}{2}}$$
if $j \leq 2l+3$. Using Lemma \ref{lemma 5} with $q=2l+3$ we obtain 
\begin{eqnarray*}
|S_4|^2 &\lll& X X_1 + X_1 \sum_{y_1, y_2} \left( X_1^{1- \frac{1}{K}} + X_1 \left( \frac{y_1 \log^k X }{m |y_1 - y_2| X^{\theta_r}}\right)^{1/K} \right) + X_1 X^{-\frac{1}{2} \frac{1}{4K \cdot 2L^2 - 2K}} \\
&\lll& XX_1 + X^{2- \frac{2}{3k}} + X^2 \left( \frac{\log^k X}{m X^{\theta_r}} \right)^{1/K} + X^{2- \frac{1}{16KL^2 - 4K}}.
\end{eqnarray*}
Summing over all the subintervals completes the proof.
 \end{proof}

\begin{Proposition}
\label{prop 2}
Let $P(x)$ and $f(x)$ be as in Proposition \ref{prop 1}. Then the discrepancy of the sequence $(f(p) + P(p))$ satisfies
$$D_N \lll   N^{-\frac{1}{10}} + N^{- \frac{1}{3K}} + N^{- \frac{\theta_r}{K}} + N^{- \frac{1}{64 K L^5}}.$$
Consequently, the sequence $(f(p) + P(p))$ is u.d. $\bmod 1$. 
\end{Proposition}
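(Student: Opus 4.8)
The plan is to deduce Proposition \ref{prop 2} from the exponential sum estimate in Proposition \ref{prop 1} by invoking the Erd\H os--Tur\'an inequality (Lemma \ref{lemma 4}). First I would apply Lemma \ref{lemma 4} with the sequence $x_n = f(p_n) + P(p_n)$ and with a suitable choice of the cutoff $H$; writing $\pi(N)$ for the number of primes up to $N$ and estimating $\pi(N) \asymp N/\log N$, this gives
\[
D_N \ll \frac{1}{H} + \sum_{h=1}^{H} \frac{1}{h} \cdot \frac{\log N}{N}\left| \sum_{p \leq N} e\left(h f(p) + h P(p)\right)\right|.
\]
For the inner exponential sum I would substitute the bound of Proposition \ref{prop 1} with $m = h$, valid in the range $1 \leq h \leq N^{1/10}$, so I would choose $H = N^{1/10}$ (or a small power of $N$; any fixed positive power below $1/10$ would also work and only affects the $N^{-1/10}$ term). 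The term $1/H = N^{-1/10}$ contributes the first error term.

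Next I would handle the four pieces coming from Proposition \ref{prop 1} term by term after dividing by $N/\log N$ and summing $\frac{1}{h}$ over $h \leq H$. The key observation is that $\sum_{h \leq H} \frac{1}{h} \ll \log H \ll \log N$, which is absorbed into the $\lll$ notation, so each term of Proposition \ref{prop 1}, after multiplication by $\log N / N$, becomes (up to $N^{\epsilon}$ factors): $N^{-1/(3K)}$ from $N^{1 - 1/(3K)}$; $N^{-1/(64KL^5 - 4K)}$ from the corresponding term, which I would simplify to $N^{-1/(64KL^5)}$ by noting $64KL^5 - 4K \geq 64KL^5 / 2$ or similar (absorbing the constant into $\lll$); and $N^{-1/10}$ from $N^{1 - 1/10}$. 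The only term requiring a little care is $\frac{N}{(h N^{\theta_r})^{1/K}}$: after dividing by $N$ and summing $\frac{1}{h} \cdot h^{-1/K}$ over $h$, the sum $\sum_{h \geq 1} h^{-1 - 1/K}$ converges, so this contributes $\ll N^{-\theta_r / K}$. Collecting the four contributions yields exactly
\[
D_N \lll N^{-\frac{1}{10}} + N^{-\frac{1}{3K}} + N^{-\frac{\theta_r}{K}} + N^{-\frac{1}{64KL^5}}.
\]

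Finally, since $\theta_r > 0$ and $K, L$ are fixed positive powers of $2$, every exponent on the right-hand side is a negative constant, so $D_N \to 0$ as $N \to \infty$; by the criterion recalled just before Lemma \ref{lemma 4} (namely that $(x_n)$ is u.d. mod $1$ iff $D_N \to 0$), the sequence $(f(p) + P(p))_{p \in \mathcal{P}}$ is uniformly distributed mod $1$. I do not expect a genuine obstacle here: the proposition is essentially a bookkeeping consequence of Proposition \ref{prop 1}. The only mildly delicate point is making sure the convergent series $\sum_h h^{-1-1/K}$ is used for the second term rather than the crude bound $\log N$, since the latter would still be fine (it gets absorbed into $\lll$) but the stated clean exponent $N^{-\theta_r/K}$ comes out most transparently from the convergent sum; all other terms only need $\sum_{h \leq H} 1/h \ll \log N$, which the $\lll$ notation swallows.
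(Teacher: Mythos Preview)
Your proposal is correct and follows exactly the paper's approach: apply the Erd\H os--Tur\'an inequality (Lemma \ref{lemma 4}) with $H = N^{1/10}$ and then insert the bound of Proposition \ref{prop 1}. The paper's own proof is two lines long and leaves precisely the bookkeeping you spell out (the $\log N$ from $\pi(N)$, the harmonic sum $\sum_{h\le H}1/h$, and the convergent sum $\sum_h h^{-1-1/K}$ for the $N^{-\theta_r/K}$ term) to the reader, so your write-up is simply a more detailed version of the same argument.
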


\begin{proof}
We use Lemma \ref{lemma 4} with $H = N^{1/10}$ and obtain 
$$D_N \ll \frac{1}{H} + \sum_{h=1}^H \frac{1}{h} \left| \frac{1}{N} \sum_{p \leq N} e(hf(p) + h P(p)) \right|.$$   
Applying Proposition \ref{prop 1} we obtain the claimed result :
$$D_N \lll  N^{-\frac{1}{10}} + N^{- \frac{1}{3K}} + N^{- \frac{\theta_r}{K}} + N^{- \frac{1}{64 K L^5}} .$$
\end{proof}

\begin{proof}[Proof of Theorem \ref{ud}]
We will consider two cases.

Assume first that at least one $\theta_j \notin \mathbb{Z}^+$. Then the
function $\xi(x)$ can be rewritten as $f(x) + P(x)$ as in Proposition \ref{prop
  1}, namely, $P(x)$ is a polynomial and $f(x) = \sum_{j=1}^r d_j x^{\theta_j}$ with $r \geq 1$, $d_r \ne 0$, $d_j$ real, $0 < \theta_1 < \cdots < \theta_r $ and $\theta_j \notin \mathbb{Z}^+$, so $(\xi(p))$ is u.d. $(\bmod 1)$.

Now we assume that all $\theta_j \in \mathbb{Z}^{+}$, i.e. $\xi(x)$ is a polynomial and at least one coefficient $\alpha_j$ is irrational. Then $( \xi(p) )$ is u.d. $\bmod 1$ due to the result of Rhin. (See \cite{Rh}.)
\end{proof}

We list now some corollaries of Theorem \ref{ud} 

\begin{Corollary}
\label{lemma p-1}Let $\xi(x) = \sum_{j=1}^m \alpha_j x^{\theta_j} $ be as in Theorem \ref{ud}. Then for any $h \in \mathbb{Z}$ $(\xi(p-h))_{p \in \mathcal{P}}$ is u.d. $\bmod 1$.
\end{Corollary}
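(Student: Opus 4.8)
\emph{Plan.} The plan is to massage $\xi(x-h)$ into a function of exactly the shape handled by Proposition~\ref{prop 2} (or, when $\xi$ happens to be a polynomial, by Theorem~\ref{ud} itself), up to an additive term that tends to $0$ as $x\to\infty$ and therefore cannot affect uniform distribution. The underlying fact is that replacing $x$ by $x-h$ in a pure power $x^{\theta}$ reproduces the same power $x^{\theta}$ with the same coefficient, plus strictly lower pure powers, plus a vanishing tail.

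\emph{Reduction.} For those $j$ with $\theta_j\in\mathbb Z^+$, the summand $\alpha_j(x-h)^{\theta_j}$ is a polynomial in $x$ whose leading coefficient is still $\alpha_j$, and I collect all of these into one polynomial $Q(x)$. For $\theta_j\notin\mathbb Z^+$ I would write $(x-h)^{\theta_j}=x^{\theta_j}(1-h/x)^{\theta_j}$ and apply Taylor's formula for $(1-t)^{\theta_j}$ at $t=0$ to order $\lfloor\theta_j\rfloor$, which for $x>|h|$ gives
\[
  \alpha_j(x-h)^{\theta_j}=\sum_{i=0}^{\lfloor\theta_j\rfloor}\alpha_j\binom{\theta_j}{i}(-h)^i\,x^{\theta_j-i}+O(x^{\theta_j-\lfloor\theta_j\rfloor-1}).
\]
Since $\theta_j\notin\mathbb Z$, each displayed exponent $\theta_j-i$ is a positive non-integer and the remainder is $O(x^{-\delta})$ for some $\delta>0$. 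Summing over $j$ and grouping equal exponents produces a decomposition $\xi(x-h)=g(x)+Q(x)+r(x)$, where $g(x)=\sum_l d_l x^{\tau_l}$ with all $\tau_l>0$ and $\tau_l\notin\mathbb Z^+$, $Q$ is a polynomial, and $r(x)\to0$.

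\emph{The two cases.} If at least one $\theta_j$ is not an integer, let $\theta_r$ be the largest such. In the expansion above the power $x^{\theta_r}$ can only come from the single summand with $\theta_j=\theta_r$ --- the integer $\theta_j$ contribute only integer powers, while a non-integer $\theta_j<\theta_r$ contributes only powers $\leq\theta_j<\theta_r$ --- so the coefficient of $x^{\theta_r}$ in $g$ equals $\alpha_r\neq0$. Thus $g+Q$ is exactly of the form $f+P$ treated in Proposition~\ref{prop 1}, and Proposition~\ref{prop 2} gives that $(g(p)+Q(p))_{p\in\mathcal P}$ is u.d.\ mod $1$. For any nonzero integer $k$, writing $M$ for the number of terms, one then has
\[
  \frac1M\sum_{n=1}^M e(k\,\xi(p_n-h))=\frac1M\sum_{n=1}^M e(k(g(p_n)+Q(p_n)))\,e(k\,r(p_n))\longrightarrow0
\]
as $M\to\infty$, because $e(k\,r(p_n))\to1$ while the Weyl sums of $(g(p_n)+Q(p_n))_n$ are $o(M)$; hence by Weyl's criterion $(\xi(p-h))_{p\in\mathcal P}$ is u.d.\ mod $1$. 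If instead every $\theta_j$ is an integer, then $\xi(x-h)=Q(x)$ is a polynomial and, by the hypothesis of Theorem~\ref{ud}, some $\alpha_j$ is irrational; from the inverse relation $\xi(x)=Q(x+h)$ one sees that if all non-constant coefficients of $Q$ were rational then so would be all $\alpha_j$, a contradiction, so $Q$ has an irrational non-constant coefficient. Discarding the harmless constant term of $Q$ and invoking Theorem~\ref{ud} (the polynomial case, due to Rhin) completes the proof.

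\emph{Where the difficulty lies.} There is essentially no new analytic content: everything reduces to Propositions~\ref{prop 1}--\ref{prop 2} and Theorem~\ref{ud}. The only steps that need care are the degree count showing the top non-integer power is not cancelled in the grouping, the observation that shifting a polynomial preserves the irrationality of at least one non-constant coefficient, and the standard fact that uniform distribution mod $1$ is stable under perturbations tending to $0$; none of these is deep.
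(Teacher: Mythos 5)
Your proof is correct and follows essentially the same route as the paper: expand $(x-h)^{\theta}$ into the same leading power plus lower powers plus a term tending to $0$, and conclude via stability of uniform distribution under vanishing perturbations (the paper folds both of your cases back into Theorem~\ref{ud} as a black box, while you invoke Proposition~\ref{prop 2} and Rhin's theorem directly, and explicitly check the non-cancellation of the top non-integer coefficient and the preservation of an irrational non-constant coefficient under the shift — details the paper leaves implicit).
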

\begin{proof}
Note that for $k < \theta < k+1$, where $k$ is a non-negative integer,  there are $a_1, a_2, \cdots , a_k$ and $g(x)$ such that
$$ (x-h)^{\theta} = x^{\theta} + a_1 x^{\theta-1} + \cdots + a_k x^{\theta-k} + g(x)  \,\,\, \,\,\, \textrm{and} \,\, \lim_{x \rightarrow \infty} g(x) = 0. $$
Then $\sum_{j=1}^m \alpha_j (p -h)^{\theta_j} $ can be written as the sum of $\tilde{\xi}(p) + G(p)$, where $\tilde{\xi}(x)$ is the function as in Theorem \ref{ud} and $\lim\limits_{x \rightarrow \infty} G(x) = 0$. So the result follows. 
\end{proof}

The following result follows from Corollary \ref{lemma p-1} via the classical Weyl criterion (see Theorem 6.2 in Chapter 1 of \cite{KN}.)

\begin{Corollary}
\label{ud mod T}
Let $0 <\theta_1 < \theta_2 < \cdots < \theta_m$ and let $\gamma_1, \gamma_2 , \dots , \gamma_m$ be non-zero real numbers such that $\gamma_i \notin \mathbb{Q}$ if $\theta_i \notin \mathbb{N}$. Let $h$ be an integer. Then
$$( (\gamma_1 (p-h)^{\theta_1}, \gamma_2 (p-h)^{\theta_2}, \cdots , \gamma_m (p-h)^{\theta_m}) )_{p \in \mathcal{P}}$$
is u.d. $\bmod 1$ in $\mathbb{T}^m$.
\end{Corollary}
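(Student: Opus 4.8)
The plan is to peel off the coordinates by Weyl's criterion and then recognize each resulting one–dimensional sequence as an instance of Corollary \ref{lemma p-1}. Recall (Theorem 6.2 in Chapter 1 of \cite{KN}) that a sequence $(\mathbf{x}_n)_n$ in $\mathbb{T}^m$ is u.d. mod $1$ if and only if for every nonzero $\mathbf{b}=(b_1,\dots,b_m)\in\mathbb{Z}^m$ the scalar sequence $(\langle\mathbf{b},\mathbf{x}_n\rangle)_n$ is u.d. mod $1$ in $\mathbb{T}$. Applied to $\mathbf{x}_p=\bigl(\gamma_1(p-h)^{\theta_1},\dots,\gamma_m(p-h)^{\theta_m}\bigr)$, this reduces the assertion to showing that
$$\left(\sum_{j=1}^m b_j\gamma_j\,(p-h)^{\theta_j}\right)_{p\in\mathcal{P}}$$
is u.d. mod $1$ for each fixed $\mathbf{b}\in\mathbb{Z}^m\setminus\{\mathbf{0}\}$.

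So fix such a $\mathbf{b}$, put $J=\{\,j:b_j\ne 0\,\}\ne\emptyset$, and set $\xi(x)=\sum_{j\in J}b_j\gamma_j\,x^{\theta_j}$, so that $\langle\mathbf{b},\mathbf{x}_p\rangle=\xi(p-h)$. The exponents $\theta_j$ with $j\in J$ are distinct and positive, and every coefficient $b_j\gamma_j$ is a nonzero real, so $\xi$ is of the form treated in Theorem \ref{ud} as soon as its arithmetic side condition is checked: if every $\theta_j$ with $j\in J$ is a positive integer, then at least one $b_j\gamma_j$ is irrational. Granting this, Corollary \ref{lemma p-1} (applied with the given integer $h$) shows $(\xi(p-h))_{p\in\mathcal{P}}$ is u.d. mod $1$, and Weyl's criterion then yields the corollary. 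When instead some $\theta_j$ with $j\in J$ is non-integral, the side condition of Theorem \ref{ud} is vacuous and Corollary \ref{lemma p-1} applies with no further ado.

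The one substantive point — and the only place the hypothesis on the $\gamma_i$ enters — is verifying that \emph{side condition}: in the case where all $\theta_j$ with $j\in J$ are integers one needs at least one of the corresponding $\gamma_j$ to be irrational, for then $b_j\gamma_j\notin\mathbb{Q}$ (since $b_j$ is a nonzero integer) and Theorem \ref{ud} applies to $\xi$. This is the crux of the matter; everything else is a mechanical unwinding of Weyl's criterion. Accordingly I would expect no difficulty beyond ensuring that the hypotheses on the $\gamma_i$ are set up so that this irrationality condition holds for every admissible support $J$ — in particular for $\mathbf{b}$ supported on a single index carrying an integer exponent — while the non-integer exponents impose no constraint on the coefficients at all.
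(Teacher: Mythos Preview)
Your approach is exactly the paper's: the paper gives no argument beyond the sentence ``follows from Corollary~\ref{lemma p-1} via the classical Weyl criterion,'' and your unwinding of that is correct.

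Your closing paragraph puts its finger on a genuine issue with the \emph{statement} rather than with your proof. The hypothesis as printed reads ``$\gamma_i\notin\mathbb{Q}$ if $\theta_i\notin\mathbb{N}$,'' but as you observe, the reduction via Weyl's criterion needs precisely the opposite implication: when $\mathbf{b}$ is supported on indices with $\theta_j\in\mathbb{N}$, Theorem~\ref{ud} requires some $b_j\gamma_j$ to be irrational, which forces $\gamma_j\notin\mathbb{Q}$ for those $j$; whereas indices with $\theta_j\notin\mathbb{N}$ trigger no side condition at all. (Indeed the printed hypothesis is visibly false already at $m=1$, $\theta_1=1$, $\gamma_1=1$.) The corollary is only invoked later under the additional assumption that all $\theta_i$ are non-integers (see Corollary~\ref{ud along} and the proof of Proposition~\ref{positive density}), where the distinction is moot, so the slip is harmless for the paper's applications --- but your diagnosis is correct.
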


\begin{Corollary}
\label{ud along}
Let $\theta_1, \cdots, \theta_m$ and $\gamma_1, \cdots, \gamma_m$ be as in Corollary \ref{ud mod T}. Let $q$ and $t$ be positive integes such that $(t,q) = 1$ and let $h $ be an integer.  If $\theta_i \notin \mathbb{Q}$ for all $i$, then
$$ \{ (\gamma_1 (p-h)^{\theta_1}, \gamma_2 (p-h)^{\theta_2}, \cdots , \gamma_m (p-h)^{\theta_m}) \}$$
is u.d. $\bmod 1$ in $\mathbb{T}^m$, where $p$ describes the increasing sequence of prime numbers belonging to the congruence class $t + q \mathbb{N}$. 
\end{Corollary}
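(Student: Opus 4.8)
The plan is to reduce uniform distribution along a prescribed arithmetic progression of primes back to uniform distribution along \emph{all} primes, which is what Corollary \ref{ud mod T} gives us. Recall that the $p$ in the statement runs over primes in the congruence class $t+q\NN$; write $\pi(N;q,t)$ for the number of such primes up to $N$. By the Weyl criterion in $\mathbb{T}^m$, it suffices to show that for every nonzero integer vector $\bold{n}=(n_1,\dots,n_m)\in\ZZ^m$,
\[
\frac{1}{\pi(N;q,t)}\sum_{\substack{p\leq N\\ p\equiv t\,(q)}} e\!\left(\sum_{i=1}^m n_i\gamma_i(p-h)^{\theta_i}\right)\longrightarrow 0
\]
as $N\to\infty$. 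The standard device for isolating a congruence class is to detect it with additive characters modulo $q$:
\[
\mathbf{1}_{p\equiv t\,(q)}=\frac{1}{q}\sum_{a=0}^{q-1} e\!\left(\frac{a(p-t)}{q}\right),
\]
so the sum above becomes $\frac1q\sum_{a=0}^{q-1} e(-at/q)\sum_{p\leq N} e\!\left(\frac{ap}{q}+\sum_i n_i\gamma_i(p-h)^{\theta_i}\right)$.

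First I would handle the term $a=0$: there the inner sum is $\sum_{p\leq N} e\!\left(\sum_i n_i\gamma_i(p-h)^{\theta_i}\right)$, which is $o(\pi(N))$ by Corollary \ref{ud mod T} (the exponent function is of the form treated there, since at least one $\theta_i$ is irrational because the hypothesis here is $\theta_i\notin\QQ$ for \emph{all} $i$, and the $\gamma_i$ are nonzero). For each fixed $a$ with $1\leq a\leq q-1$, I would absorb the linear term $ap/q$ into the polynomial part: the exponent is $\sum_i n_i\gamma_i(p-h)^{\theta_i}+\frac{a}{q}p$, and writing $\frac{a}{q}p=\frac{a}{q}(p-h)+\frac{ah}{q}$, the constant $\frac{ah}{q}$ factors out of the sum and the remaining function is again of the shape $\sum_i \tilde\gamma_i(p-h)^{\theta_i}$ plus a linear (degree-one, hence polynomial) term in $(p-h)$; concretely it matches the hypotheses of Corollary \ref{lemma p-1} (equivalently Proposition \ref{prop 2} after expanding $(p-h)^{\theta_i}$), with the extra rational-linear term contributing a polynomial $P$ of degree $\leq 1$. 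Since the exponents $\theta_i$ are non-integers, the function is genuinely non-polynomial, so Proposition \ref{prop 2} / Corollary \ref{lemma p-1} applies and the inner sum over $p\leq N$ is $o(\pi(N))$. Finally, by the prime number theorem in arithmetic progressions, $\pi(N;q,t)\sim \pi(N)/\varphi(q)\asymp \pi(N)$ since $(t,q)=1$, so dividing by $\pi(N;q,t)$ and summing the finitely many terms $a=0,1,\dots,q-1$ still yields a quantity tending to $0$. This establishes the Weyl sums vanish and hence the claimed uniform distribution.

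The only slightly delicate point — and the one I would be most careful about — is the $a\neq 0$ step: one must make sure that adding the rational linear term $\frac{a}{q}(p-h)$ does not destroy the applicability of Proposition \ref{prop 2}. This is fine because that proposition allows an arbitrary real polynomial $P(x)$ (here $P(x)=\frac{a}{q}x$, degree $1$, so $k=1$, $K=2$ in its notation), and the genuinely fractional highest-order term $x^{\theta_r}$ with $\theta_r\notin\ZZ$ is what drives the estimate; the linear term is harmless. One should also note that the expansion $(p-h)^{\theta_i}=p^{\theta_i}+\cdots+g_i(p)$ with $g_i\to 0$, as in the proof of Corollary \ref{lemma p-1}, lets us pass between "$(p-h)$" and "$p$" forms freely since a vanishing additive term does not affect a Weyl limit. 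No new number-theoretic input beyond Proposition \ref{prop 1}/\ref{prop 2}, Corollary \ref{ud mod T}, and the prime number theorem in progressions is needed.
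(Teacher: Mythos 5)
Your proposal is correct and follows essentially the same route as the paper: the paper also detects the congruence class with the orthogonality identity (Lemma \ref{mo}), rewrites the extra character as the rational linear term $\tfrac{j}{q}(p-h)$ plus a constant so that the exponent falls under Corollary \ref{lemma p-1} / Propositions \ref{prop 1}--\ref{prop 2}, and normalizes using the density $1/\phi(q)$ of primes in the progression. Your separate treatment of the $a=0$ term via Corollary \ref{ud mod T} is only a cosmetic difference from the paper's uniform treatment of all $j$.
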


The proof of Corollary \ref{ud along} hinges on the following classical identity (see p.34 in \cite{Mo}). \begin{Lemma} 
\label{mo}
For any $q \in \mathbb{N}$ and $b \in \mathbb{N}$ with $1 \leq b \leq q$, one has
\begin{displaymath}
  \frac{1}{q} \sum_{j=1}^q e\left(\frac{(n-b)j}{q} \right)= 
     \begin{cases}
       1 &  n \equiv b \,\, (\bmod q)\\
       0 &  \text{} \textrm{otherwise}
            \end{cases}
\end{displaymath} 
\end{Lemma}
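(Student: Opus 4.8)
The plan is to read the left-hand side as a normalized geometric sum over $q$-th roots of unity and to split into the two obvious cases. Set $\zeta := e\!\left(\frac{n-b}{q}\right)$, so that the quantity in question is exactly $\frac1q\sum_{j=1}^q \zeta^j$. The one point deserving a word of care is the equivalence $\zeta = 1 \iff n \equiv b \pmod q$: since $e(x)=\exp(2\pi i x)$ equals $1$ precisely when $x\in\mathbb{Z}$, and $n,b$ are integers, we have $\zeta=1$ iff $\frac{n-b}{q}\in\mathbb{Z}$ iff $q\mid (n-b)$. Note also that in either case $\zeta^q = e(n-b) = 1$, because $n-b\in\mathbb{Z}$; this is what makes the sum collapse.

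With this in hand the two cases are immediate. If $n\equiv b\pmod q$ then $\zeta=1$, so $\frac1q\sum_{j=1}^q\zeta^j = \frac1q\cdot q = 1$, the asserted value. If $n\not\equiv b\pmod q$ then $\zeta\neq 1$, so the finite geometric series formula applies and gives $\sum_{j=1}^q\zeta^j = \zeta\cdot\frac{\zeta^q-1}{\zeta-1}$; since $\zeta^q=1$ the numerator vanishes while $\zeta-1\neq 0$, hence the sum is $0$. Dividing by $q$ finishes the computation.

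I do not expect any genuine obstacle here — the statement is a routine orthogonality relation for the additive characters mod $q$, and the entire argument is the elementary computation above. The only thing to be careful about is to justify cleanly the equivalence $\zeta=1\iff n\equiv b\pmod q$ and the identity $e(m)=1$ for $m\in\mathbb{Z}$, both of which follow directly from the definition $e(x)=\exp(2\pi i x)$; the hypothesis $1\le b\le q$ plays no role in the proof and may be ignored.
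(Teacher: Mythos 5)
Your proof is correct: it is the standard orthogonality relation for additive characters modulo $q$, established by the elementary geometric-series computation, and you rightly note that the hypothesis $1\le b\le q$ is immaterial. The paper itself offers no proof, simply citing Montgomery (p.~34 of \cite{Mo}), and your argument is exactly the classical one that citation refers to, so there is nothing further to reconcile.
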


\begin{proof}[Proof of Corollary \ref{ud along}]
Let $A_N = \{ p \leq N : p \equiv t \mod q \} $. We need to show that for $(a_1, a_2, \cdots, a_m) \ne (0,0, \cdots, 0)$,$$\lim_{N \rightarrow \infty} \frac{1}{|A_N|} \sum_{p \in A_N} e \left( \sum_{i=1}^m a_i \gamma_i (p-h)^{\theta_i} \right) = 0. $$
The result follows from 

 \begin{align*}
(i) \sum\limits_{\substack{ p \leq N \\ p \equiv t \,\,  \bmod q}} e\left( \sum_{i=1}^m a_i \gamma_i (p-h)^{\theta_i} \right) 
&= \sum_{p \leq N} e \left( \sum_{i=1}^m a_i \gamma_i (p-h)^{\theta_i} \right) \frac{1}{q} \sum_{j=1}^q e \left( \frac{(p-t)j}{q} \right) \\
&= \frac{1}{q} \sum_{j=1}^q \sum_{p \leq N} e \left( \sum_{i=1}^m a_i \gamma_i (p-h)^{\theta_i} + \frac{j}{q}(p-h) - \frac{j}{q}(t-h) \right)
\end{align*}

and 
$(ii)$ $$\lim_{N \rightarrow \infty} \frac{|A_N|}{\pi(N)} = \lim_{N \rightarrow \infty} \frac{\left| \{ p \leq N : p \equiv t \,\, \bmod q \} \right|}{\pi(N)} = \frac{1}{\phi(q)},$$
where $\phi$ is Euler's totient function.
\end{proof}

We will utilize Corollary \ref{lemma p-1} in the proof of the following proposition, which will be used in the next sections.
\begin{Proposition} 
\label{ud lemma}
Let $g(x) = \sum_{j=1}^m \alpha_j [x^{\theta_j}] $, where $\theta_1, \theta_2, \dots , \theta_m$ are distinct positive real numbers and $\alpha_1, \alpha_2, \dots, \alpha_m$ are non-zero reals. Let $h$ be an integer.
\begin{enumerate}[(i)]
\item If $\theta_j \notin \mathbb{Z}$ for all $j$ and $\alpha_j \notin \mathbb{Z}$ for all $j$, then 
 \begin{equation}
 \label{eqn1}
 \lim_{N \rightarrow \infty} \frac{1}{N} \sum_{n=1}^N e(g(p_n -h)) = 0.
 \end{equation}
\item If one of $\alpha_j$ is irrational, then $(g(p - h))_{p \in \mathcal{P}}$ is u.d. $\bmod 1$.
\end{enumerate}
\end{Proposition}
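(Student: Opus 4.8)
The plan is to reduce Proposition \ref{ud lemma} to the already-established equidistribution of smooth exponential sums along primes, namely Corollary \ref{lemma p-1}, by disposing of the fractional-part corrections. The central point is that writing $[x^{\theta_j}] = x^{\theta_j} - \{x^{\theta_j}\}$ turns $g(p-h)$ into $\sum_j \alpha_j (p-h)^{\theta_j} - \sum_j \alpha_j\{(p-h)^{\theta_j}\}$, and the second sum takes values in a bounded set; the difficulty is that $\{(p-h)^{\theta_j}\}$ does not tend to $0$, so one cannot simply absorb it into an error term as in Corollary \ref{lemma p-1}. Instead I would expand the indicator of the fractional part in a Fourier-type fashion.

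For part (i): fix $n\in\NN$ and consider the average $\frac1N\sum_{n=1}^N e(g(p_n-h))$. Pick a large integer $R$ and approximate each $\alpha_j[x^{\theta_j}]$ modulo $1$ by noting $e(\alpha_j[x^{\theta_j}])$ depends only on $[x^{\theta_j}]$; the idea is to replace $[x^{\theta_j}]$ by $x^{\theta_j}$ at the cost of $e(\alpha_j\{x^{\theta_j}\})^{-1}$ and then to expand the periodic function $u\mapsto e(-\alpha_j\{u\})$ — equivalently the sawtooth — into its Fourier series $\sum_{r\in\ZZ} c_r^{(j)} e(r x^{\theta_j})$, truncated at $|r|\le R$ with an $L^1$-type error controlled because $\alpha_j\notin\ZZ$ (so $e(-\alpha_j\{u\})$ is a genuine nonconstant function of period $1$ whose Fourier coefficients decay like $1/|r|$). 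Multiplying out the $m$ such expansions, $e(g(p-h))$ becomes a finite linear combination (plus a small error, uniformly small once $R$ is large) of terms $e\bigl(\sum_j(\alpha_j + r_j)(p-h)^{\theta_j}\bigr)$ with $(r_1,\dots,r_m)\in\ZZ^m$. Since every $\theta_j\notin\ZZ$, each such exponent is a function of the shape handled by Corollary \ref{lemma p-1} (it is $\tilde\xi(x)$ with non-integer exponents, so the hypothesis on irrationality of coefficients is vacuous), hence each of these averages over primes tends to $0$. Letting $N\to\infty$ and then $R\to\infty$ yields \eqref{eqn1}.

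For part (ii): by Weyl's criterion it suffices to show $\frac1N\sum_{n=1}^N e\bigl(k\,g(p_n-h)\bigr)\to 0$ for every nonzero integer $k$. Write $k g(p-h) = \sum_j (k\alpha_j)[p_n^{\theta_j}\text{-shift}]$ and split the index set $\{1,\dots,m\}$ into $S_1 = \{j: \theta_j\notin\ZZ\}$ and $S_2 = \{j:\theta_j\in\ZZ\}$. For $j\in S_1$ expand $e(k\alpha_j[(p-h)^{\theta_j}])$ via the sawtooth Fourier series exactly as above — but now I must also handle the possibility $k\alpha_j\in\ZZ$, in which case $e(k\alpha_j[(p-h)^{\theta_j}]) = e(k\alpha_j(p-h)^{\theta_j})$ outright. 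For $j\in S_2$, $[(p-h)^{\theta_j}] = (p-h)^{\theta_j}$ is already an integer polynomial in $p$, so $e(k\alpha_j[(p-h)^{\theta_j}])$ simply equals $e(k\alpha_j(p-h)^{\theta_j})$. After multiplying the expansions, $e(kg(p-h))$ is (up to arbitrarily small error) a finite combination of $e\bigl(\sum_j \gamma_j(p-h)^{\theta_j}\bigr)$ where $\gamma_j = k\alpha_j + r_j$ with $r_j\in\ZZ$ for $j\in S_1$ and $\gamma_j = k\alpha_j$ for $j\in S_2$. Now invoke the hypothesis: some $\alpha_{j_0}$ is irrational. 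If $j_0\in S_1$ we are already done by part (i)'s mechanism since $\theta_{j_0}\notin\ZZ$. If $j_0\in S_2$, then $\gamma_{j_0}=k\alpha_{j_0}\notin\QQ$ is the coefficient of an integer power, so whether or not all $\theta_j\in\ZZ$, the function $\sum_j\gamma_j x^{\theta_j}$ meets the hypothesis of Theorem \ref{ud} (at least one coefficient on an integer exponent is irrational, or there is a non-integer exponent present), hence of Corollary \ref{lemma p-1}, so each such average over primes vanishes. Summing the finitely many terms and sending the truncation parameter to infinity finishes the proof.

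\textbf{Main obstacle.} The crux is the bookkeeping in the Fourier expansion of the sawtooth: one must justify that truncating $\sum_{r}c_r^{(j)}e(rx^{\theta_j})$ at $|r|\le R$ produces an error that is small \emph{after} averaging over $p\le N$, uniformly in $N$. This is where the decay $c_r^{(j)}\ll 1/|r|$ together with the trivial bound $\frac1N\sum_{n\le N}1 = 1$ on the tail is used — but care is needed because the product over $j\in S_1$ of $m$ such tails must still be summable, which forces one to peel off the tails one factor at a time (a standard telescoping argument). Everything else is a direct appeal to Corollary \ref{lemma p-1}; the arithmetic-progression refinement of Corollary \ref{ud along} is not needed here, only the plain version.
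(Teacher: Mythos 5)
Your overall strategy---expanding the fractional-part factor and reducing everything to the exponential-sum estimates of Theorem \ref{ud} and Corollary \ref{lemma p-1}---is viable and close in spirit to the paper's argument, but the step you yourself flag as the main obstacle is a genuine gap as written. The function $u\mapsto e(-\alpha\{u\})$ has a jump discontinuity (precisely because $\alpha\notin\ZZ$), so its Fourier coefficients are only of size $O(1/|r|)$ and are not absolutely summable, and its truncated Fourier series does not converge uniformly (Gibbs phenomenon near $u\in\ZZ$). Hence the claim that after truncating at $|r|\le R$ the error is ``uniformly small once $R$ is large'' is false, and the proposed remedy---the decay $c_r^{(j)}\ll 1/|r|$ plus the trivial bound on averages, with a telescoping over the $m$ factors---does not yield a valid estimate: the tail $\sum_{|r|>R}1/|r|$ diverges, and an $L^1(\mathbb{T})$ bound on the truncation error cannot be transferred to an average over the sparse sequence $\bigl((p_n-h)^{\theta_j}\bigr)$ without already knowing equidistribution of that sequence together with quantitative control of how often $\{(p_n-h)^{\theta_j}\}$ lands near the discontinuity. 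To repair the argument you need either (a) one-sided majorant/minorant trigonometric polynomials in the style of Vinogradov or Vaaler, so that the truncation error is dominated by a nonnegative trigonometric polynomial of degree $R$ whose average along the primes is again controlled by Corollary \ref{lemma p-1} up to a term $O(1/R)$, or (b) the device the paper uses: write $e(\alpha_j[x^{\theta_j}])=f_j(\alpha_j x^{\theta_j},x^{\theta_j})$ with $f_j(x,y)=e(x-\alpha_j\{y\})$ Riemann-integrable on $\mathbb{T}^2$, deduce from Theorem \ref{ud} and the Weyl criterion that the augmented vector $(\alpha_1 p^{\theta_1},p^{\theta_1},\dots)$ is u.d. on $\mathbb{T}^{2l}\times\mathbb{Z}_q^{m-l}$ (the rational $\alpha_j$ being handled through residues modulo a common denominator $q$), and invoke the standard fact that averages of a Riemann-integrable function along a u.d. sequence converge to its integral, which vanishes since $\alpha_j\notin\ZZ$. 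Either route supplies exactly the uniformity your truncation lacks; without it the reduction to Corollary \ref{lemma p-1} is not justified.

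A smaller inaccuracy: in part (ii), when $j\in S_1$ and $k\alpha_j\in\ZZ$, one has $e\bigl(k\alpha_j[(p-h)^{\theta_j}]\bigr)=1$ (an integer multiple of an integer), not $e\bigl(k\alpha_j(p-h)^{\theta_j}\bigr)$; the factor simply drops out of the product. Your case analysis survives this correction, because the factor indexed by the irrational $\alpha_{j_0}$ can never degenerate in this way, but the identity as you stated it is wrong and should be fixed alongside the main error-control issue above.
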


\begin{proof}
Our argument is similar to that used in the proof of Lemma 5.12 in \cite{BK2}. We will prove the case $h=0$ with the help of Theorem \ref{ud}. The case of non-zero $h$ can be done similarly by invoking Corollary \ref{lemma p-1} instead of Theorem \ref{ud} and is omitted.

($i$) Without loss of generality, we can assume that there exists $l$ such that 
$\alpha_1 , \dots , \alpha_l \notin  \mathbb{Q} \,\,\, \textrm{and} \,\,\, \alpha_{l+1}, \dots , \alpha_m \in \mathbb{Q}$. Furthermore we also assume that $\alpha_{l+1} , \dots , \alpha_m$ have a common denominator $q$, thus denote $\alpha_j = \frac{c_j}{q}$ for $l+1 \leq j \leq m$.

 We have  $$e(g(p_n)) = e{ ( [p_n^{\theta_1}] \alpha_1 + \cdots + [p_n^{\theta_m}] \alpha_m )}  = \prod_{j=1}^l f_j (p_n^{\theta_j} \alpha_j, p_n^{\theta_j}) \prod_{j=l+1}^m g_j([p_n^{\theta_j}]), $$
where $f_j(x,y) =  e(x - \{y\} \alpha_j) $ $(1 \leq j \leq l)$, and $g_j(z) = e ( c_j \frac{z}{q})$ $(l+1 \leq j \leq m)$.

Note that $f_j(x,y)$ are Riemann-integrable on $\mathbb{T}^2$ and $g_j(z)$ are continuous functions on $\mathbb{Z}_q = \mathbb{Z} / q \mathbb{Z}$, hence the function $\prod\limits_{j=1}^l f_j \prod\limits_{j=l+1}^m g_j $ is Riemann-integrable on $\mathbb{T}^{2l} \times \mathbb{Z}_q^{m-l}$. 

It follows from Theorem \ref{ud} and the classical Weyl criterion that, for any $u \in \mathbb{N}$,
 $$(p_n^{\theta_1} \alpha_1, p_n^{\theta_1}, \cdots , p_n^{\theta_l} \alpha_l, p_n^{\theta_l} , \frac{p_n^{\theta_{l+1}}}{u} , \cdots , \frac{p_n^{\theta_m}}{u}  )$$ is u.d. in $\mathbb{T}^{2l} \times \mathbb{T}^{m-l}$.
Since $[x]  \equiv a$ ($\bmod q$) is equivalent to $\frac{a}{q} \leq \{\frac{x}{q}\} < \frac{a+1}{q}$, we have 
$$(p_n^{\theta_1} \alpha_1, p_n^{\theta_1}, \cdots , p_n^{\theta_l} \alpha_l, p_n^{\theta_l} , [p_n^{\theta_{l+1}}] , \cdots , [p_n^{\theta_m}]  )$$
 is u.d. in $\mathbb{T}^{2l} \times \mathbb{Z}_q^{m-l}$.  Hence, (\ref{eqn1}) follows.

($ii$) By rearranging $\theta_i$, we can write
$$ g(x) = \sum_{i=1}^s a_i x^{\gamma_i} + \sum_{j=1}^t b_j [ x^{\delta_j}],  $$
where $\gamma_i \in \mathbb{N}$ and $\delta_j \in \mathbb{R}^+  \backslash \mathbb{N}$.

Then, for any non-zero integer $r$, we need to show 
$$\lim_{N \rightarrow \infty} \frac{1}{N} \sum_{n=1}^N e(r g(p_n)) = 0.$$

Without loss of generality, we assume that $b_1, \dots b_l$ are irrational and $b_{l+1}, \dots , b_{t} $ are rational. We also assume that $b_j = \frac{c_j}{q}$ $(j=l+1, \dots , t)$. Let $P(x) = \sum_{i=1}^s a_i x^{\gamma_i}$. Now consider the following two cases. 

Case I. Suppose that some $a_i$ is irrational. 
Note that
\begin{eqnarray*}
e(r g(p_n)) &=& e\left( r P(p_n) \right) \prod_{j=1}^l e \left( r b_j (p_n)^{\delta_j} - r b_j \{(p_n)^{\delta_j}\}  \right) \prod_{j=l+1}^t e \left( r b_j [(p_n)^{\delta_j}] \right) \\
&=& f_0(P(p_n)) \prod_{j=1}^l f_j(b_j(p_n)^{\delta_j}, (p_n)^{\delta_j}) \prod_{j=l+1}^t g_j([(p_n)^{\delta_j} ]),
\end{eqnarray*}
where $f_0(x) = e(r x)$, $f_j(x,y) = e(r (x-b_j\{y\}))$ $(1 \leq j \leq l)$, and $g_j(x) = e(r c_j \frac{x}{q}  )$ $(l+1 \leq j \leq t )$.
Using the above argument with Theorem \ref{ud} 
$$ \left( P(p_n) , b_1 (p_n)^{\delta_1}, (p_n)^{\delta_1}, \cdots , b_l (p_n)^{\delta_l}, (p_n)^{\delta_l}, [(p_n)^{\delta_{l+1}}] , \cdots ,  [(p_n)^{\delta_{t}}]  \right)$$
is uniformly distributed on $\mathbb{T}^{2l+1} \times \mathbb{Z}_q^{t-l}$.
Hence, $(g(p))_{p \in \mathcal{P}}$ is uniformly distributed $\bmod 1$.

Case II. Suppose that all $a_i$ are rational. Note that $b_1$ is irrational. Using the same method in Case I, the result follows from that
\begin{equation*}
\left( P(p_n) + b_1 (p_n)^{\delta_1}, (p_n)^{\delta_1}, b_2 (p_n)^{\delta_2}, (p_n)^{\delta_2}, \cdots , b_l (p_n)^{\delta_l}, (p_n)^{\delta_l}, [(p_n)^{\delta_{l+1}}], \cdots ,  [(p_n)^{\delta_{t}}]  \right)
\end{equation*}
is uniformly distributed on $\mathbb{T}^{2l} \times \mathbb{Z}_q^{t-l}$.
\end{proof}

\section{Recurrence along non-integer prime powers}
\label{sec : app}
 
 
 In this section we will prove the following ergodic theorem along the prime powers and derive some corollaries pertaining to sets of recurrence and sets of differences of positive upper Banach density in $\mathbb{Z}^k$.

\begin{Theorem}
\label{ergodic}
Let $c_1, \dots , c_k$ be distinct positive real numbers such that $c_i \notin \mathbb{N}$ for $i=1,2, \dots, k$. Let $U_1, \dots , U_k$ be commuting unitary operators on a Hilbert space $\mathcal{H}$.  Then,
$$\lim_{N \rightarrow \infty} \frac{1}{N}  \sum_{n=1}^N U_1^{[p_n^{c_1}]} \cdots U_k^{[p_n^{c_k}]} f = f^*,$$
where $p_n$ denotes the $n$-th prime and $f^*$ is the projection of $f$ on $\mathcal{H}_{inv} (:= \{ f \in \mathcal{H} :  U_i f = f \,\, \textrm{for all} \,\, i\})$.
 \end{Theorem}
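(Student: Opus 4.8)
The plan is to use the standard van Neumann--Koopman splitting of the Hilbert space $\mathcal{H}$ under the action of the abelian group generated by $U_1,\dots,U_k$, and to verify the averaging statement separately on the invariant subspace and on its orthogonal complement. Write $\mathcal{H} = \mathcal{H}_{inv} \oplus \mathcal{H}_{inv}^{\perp}$. On $\mathcal{H}_{inv}$ every term $U_1^{[p_n^{c_1}]}\cdots U_k^{[p_n^{c_k}]}f = f$, so the Cesàro average is trivially $f = f^*$. Hence it suffices to prove that for $f \in \mathcal{H}_{inv}^{\perp}$ one has $\frac{1}{N}\sum_{n=1}^N U_1^{[p_n^{c_1}]}\cdots U_k^{[p_n^{c_k}]} f \to 0$ in norm.

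For the complement, the natural tool is the spectral theorem for the commuting family of unitaries: there is a projection-valued measure $E(\cdot)$ on $\mathbb{T}^k$ such that $U_1^{a_1}\cdots U_k^{a_k} = \int_{\mathbb{T}^k} e(a_1 t_1 + \cdots + a_k t_k)\, dE(\mathbf{t})$. Applying this to $f$ and using $\|\cdot\|^2 = \langle\cdot,\cdot\rangle$, the squared norm of the average becomes
\[
\left\| \frac{1}{N}\sum_{n=1}^N U_1^{[p_n^{c_1}]}\cdots U_k^{[p_n^{c_k}]} f \right\|^2 = \int_{\mathbb{T}^k} \left| \frac{1}{N}\sum_{n=1}^N e\!\left( \sum_{i=1}^k [p_n^{c_i}]\, t_i \right) \right|^2 d\mu_f(\mathbf{t}),
\]
where $\mu_f = \langle E(\cdot)f, f\rangle$ is the spectral measure of $f$, a finite positive measure on $\mathbb{T}^k$ with $\mu_f(\{\mathbf{0}\}) = \|f^*\|^2 = 0$ since $f \perp \mathcal{H}_{inv}$ (more precisely, one needs $\mu_f$ to have no atom at rational points; I address this below). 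By the dominated convergence theorem it suffices to show the inner average tends to $0$ for $\mu_f$-almost every $\mathbf{t}$, and then bound the whole integral. The key input is precisely Proposition \ref{ud lemma}: taking $g(x) = \sum_{i=1}^k t_i\, [x^{c_i}]$ with $\theta_j = c_j \notin \mathbb{Z}$, part (i) gives $\frac{1}{N}\sum_{n=1}^N e(g(p_n)) \to 0$ whenever all $t_i$ are irrational, and part (ii) gives the same (indeed full u.d.) whenever at least one $t_i$ is irrational. So the inner average goes to $0$ for every $\mathbf{t} \in \mathbb{T}^k$ having at least one irrational coordinate.

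The main obstacle — and the point requiring care — is the set of $\mathbf{t}$ with all coordinates rational, where Proposition \ref{ud lemma} says nothing (the exponential sum need not vanish). I would handle this by decomposing $f$ further. For a fixed modulus $q$, the rational points with denominator dividing $q$ are finitely many, say $\mathbf{t} = \mathbf{a}/q$; the corresponding spectral projections $E(\{\mathbf{a}/q\})$ project onto the joint eigenspaces where $U_i^q = I$ for all $i$. Splitting off these eigenvectors, on each such eigenvector $[p_n^{c_i}]\, \mathbf{a}/q$ reduces mod $1$ to the value $[p_n^{c_i}]\, a_i \pmod q$, and one invokes the equidistribution mod $q$ established inside the proof of Proposition \ref{ud lemma} (the statement that $([p_n^{\theta_{l+1}}], \dots, [p_n^{\theta_m}])$ is u.d. in $\mathbb{Z}_q^{m-l}$): the average $\frac{1}{N}\sum_{n=1}^N e(\sum_i [p_n^{c_i}] a_i/q)$ tends to $0$ unless $a_i \equiv 0 \pmod q$ for all $i$, i.e. unless $\mathbf{t} = \mathbf{0}$. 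Thus for every $\mathbf{t} \neq \mathbf{0}$ the inner average vanishes in the limit, and since $\mu_f(\{\mathbf{0}\}) = 0$, the dominated convergence argument closes. One should phrase the dominated convergence step carefully: the integrand is bounded by $1$ uniformly and converges $\mu_f$-a.e. to $\mathbf{1}_{\{\mathbf{0}\}} = 0$ a.e., giving $\|\frac{1}{N}\sum U_1^{[p_n^{c_1}]}\cdots U_k^{[p_n^{c_k}]} f\|^2 \to 0$, which completes the proof.
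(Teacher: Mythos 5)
Your proof is correct and takes essentially the same route as the paper's: the splitting $\mathcal{H}=\mathcal{H}_{inv}\oplus\mathcal{H}_{inv}^{\perp}$ (the paper's $\mathcal{H}_{erg}$), the Bochner--Herglotz/spectral representation turning the squared norm of the average into an integral of $\bigl|\frac{1}{N}\sum_{n\leq N}e(([p_n^{c_1}],\dots,[p_n^{c_k}])\cdot\mathbf{t})\bigr|^2$ against the spectral measure, Proposition \ref{ud lemma} for the exponential sums, and dominated convergence together with the absence of an atom at $\mathbf{0}$. Your separate treatment of rational frequencies is harmless but not needed: Proposition \ref{ud lemma}(i) only assumes the coefficients are non-integers (not irrational), so after discarding the zero coordinates it already gives the vanishing of the inner average at every $\mathbf{t}\neq\mathbf{0}$, which is exactly how the paper concludes.
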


For the proof of this theorem, we will need the following Hilbert space splitting theorem:
\begin{Theorem}[cf. \cite{B4}] 
\label{Hilbert space}
Let $U_1, U_2, \dots, U_k$ be commuting unitary operators on a Hilbert space $\mathcal{H}$. Then we can split $\mathcal{H}$ in the following ways.
\begin{enumerate}[(i)]
\item
$\mathcal{H} = \mathcal{H}_{inv} \oplus \mathcal{H}_{erg},$
 where
$$\mathcal{H}_{inv} = \{ f \in \mathcal{H} :  U_i f = f \,\, \textrm{for all} \,\, i\} , $$
 and
$$\mathcal{H}_{erg} = \{ f \in \mathcal{H} : \lim_{N_1, \cdots, N_k \rightarrow \infty} \left|\left|\frac{1}{N_1 \cdots N_k} \sum_{n_1=0}^{N_1-1} \cdots  \sum_{n_k=0}^{N_k-1}U_1^{n_1} \cdots U_k^{n_k} f \right|\right| = 0 \}.$$
 
 \item $\mathcal{H} = \mathcal{H}_{rat} \oplus \mathcal{H}_{tot},$
 where
$$\mathcal{H}_{rat} = \overline{ \{ f \in \mathcal{H} :  \textrm{there exists non-zero} \, k\textrm{-tuple} \,\, (m_1, m_2, \dots, m_k) \in \mathbb{Z}^k, \, U_i^{m_i} f = f \,\, \textrm{for all} \,\, i  \} }, $$
 and
\begin{eqnarray*}
\mathcal{H}_{tot} = \{ f \in \mathcal{H} &:&  \,\, \textrm{for any non-zero} \,\, (m_1, m_2,  \dots, m_k) \\
&&\lim_{N_1, \cdots, N_k \rightarrow \infty} \left|\left|\frac{1}{N_1 \cdots N_k} \sum_{n_1=0}^{N_1-1} \cdots  \sum_{n_k=0}^{N_k-1}U_1^{m_1 n_1} \cdots U_k^{m_k n_k} f \right|\right| = 0 \}.
\end{eqnarray*}
 \end{enumerate}
\end{Theorem}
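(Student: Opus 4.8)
Looking at this, I need to prove Theorem 3.1 (the ergodic theorem along non-integer prime powers), using Theorem 3.2 (the Hilbert space splitting) and the equidistribution results from Section 2, particularly Proposition 2.7 (the statement "ud lemma").

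Let me think about the structure of the proof.

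The plan is to reduce both assertions to the classical von Neumann mean ergodic theorem for a single unitary operator, using the commutativity of the $U_i$ to factor the multiparameter averages. For a unitary $U$ write $A_N(U)=\frac1N\sum_{n=0}^{N-1}U^n$; by the mean ergodic theorem $A_N(U)\to P_U$ strongly, where $P_U$ is the orthogonal projection of $\mathcal{H}$ onto $\mathrm{Fix}(U):=\{f\in\mathcal{H}:Uf=f\}$. Since the $U_i$ commute, each $A_N(U_i)$ is a polynomial in commuting unitaries, so the operators $A_N(U_i)$ commute with one another and with every $U_j$; letting $N\to\infty$ and using that all the operators in sight are contractions, the projections $P_{U_1},\dots,P_{U_k}$ pairwise commute. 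Hence $P:=P_{U_1}\cdots P_{U_k}$ is again an orthogonal projection, and its range is $\bigcap_i\mathrm{Fix}(U_i)=\mathcal{H}_{inv}$.

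For part (i), commutativity yields the factorization
\[
\frac{1}{N_1\cdots N_k}\sum_{n_1=0}^{N_1-1}\cdots\sum_{n_k=0}^{N_k-1}U_1^{n_1}\cdots U_k^{n_k}=A_{N_1}(U_1)\cdots A_{N_k}(U_k).
\]
Combining this with the telescoping identity
\[
A_{N_1}(U_1)\cdots A_{N_k}(U_k)-P=\sum_{j=1}^{k}A_{N_1}(U_1)\cdots A_{N_{j-1}}(U_{j-1})\bigl(A_{N_j}(U_j)-P_{U_j}\bigr)P_{U_{j+1}}\cdots P_{U_k}
\]
and the uniform bound $\|A_N(U_i)\|\le1$ gives $A_{N_1}(U_1)\cdots A_{N_k}(U_k)f\to Pf$ for every $f$ as $N_1,\dots,N_k\to\infty$, since each summand tends to $0$ (apply $A_{N_j}(U_j)\to P_{U_j}$ strongly to the fixed vector $P_{U_{j+1}}\cdots P_{U_k}f$). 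Therefore $f\in\mathcal{H}_{erg}$ if and only if $Pf=0$, i.e. $f\in(\mathrm{range}\,P)^{\perp}=\mathcal{H}_{inv}^{\perp}$, and since $P$ is an orthogonal projection this yields $\mathcal{H}=\mathrm{range}\,P\oplus\ker P=\mathcal{H}_{inv}\oplus\mathcal{H}_{erg}$.

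For part (ii) I would run the same argument with each $U_i$ replaced by $U_i^{m_i}$. For a fixed non-zero tuple $\mathbf{m}=(m_1,\dots,m_k)$ the average $\frac{1}{N_1\cdots N_k}\sum U_1^{m_1n_1}\cdots U_k^{m_kn_k}f$ converges, by the factorization and telescoping of part (i), to $Q_{\mathbf m}f$, where $Q_{\mathbf m}$ is the orthogonal projection onto $V_{\mathbf m}:=\bigcap_i\mathrm{Fix}(U_i^{m_i})$; hence $f\in\mathcal{H}_{tot}$ if and only if $f\perp V_{\mathbf m}$ for every such $\mathbf m$, i.e. $f\perp\overline{\mathrm{span}}\bigcup_{\mathbf m}V_{\mathbf m}$. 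The point that needs care is that $\bigcup_{\mathbf m}V_{\mathbf m}$ is already a linear subspace: if $f\in V_{\mathbf m}$ and $g\in V_{\mathbf m'}$ then $f+g\in V_{\mathbf m''}$ with $m_i''=\mathrm{lcm}(m_i,m_i')$ (this is where it matters that every $m_i$ is non-zero; equivalently one may restrict throughout to a single common exponent $m$, writing $\mathcal{H}_{rat}=\overline{\bigcup_{m\ge1}\bigcap_i\mathrm{Fix}(U_i^{m})}$). Consequently the closure of $\bigcup_{\mathbf m}V_{\mathbf m}$ coincides with its closed linear span and equals $\mathcal{H}_{rat}$, so $\mathcal{H}_{tot}=\mathcal{H}_{rat}^{\perp}$ and $\mathcal{H}=\mathcal{H}_{rat}\oplus\mathcal{H}_{tot}$.

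I do not expect a genuine obstacle: the statement is a routine consequence of the mean ergodic theorem. The only steps requiring attention are the strong convergence of the composed averaging operators (handled by the uniform contraction bound and the telescoping identity) and the observation that $\bigcup_{\mathbf m}V_{\mathbf m}$ spans a subspace whose closure is genuinely closed, which is exactly what makes $\mathcal{H}_{rat}$ complementary to $\mathcal{H}_{tot}$. Alternatively, both parts follow uniformly from the joint spectral resolution $E$ of the commuting unitaries on $\mathbb{T}^{k}$ (so that $U_i=\int_{\mathbb{T}^{k}}z_i\,dE(z)$): then $\mathcal{H}_{inv}=E(\{\mathbf 1\})\mathcal{H}$ and $\mathcal{H}_{rat}=E(S)\mathcal{H}$ with $S=\{z\in\mathbb{T}^{k}:\text{every }z_i\text{ is a root of unity}\}$, while the averages equal $\int_{\mathbb{T}^{k}}\prod_i\Phi_{N_i}(z_i^{m_i})\,dE(z)$ with $\Phi_N(w)=\frac1N\sum_{n<N}w^{n}\to\mathbf{1}_{\{w=1\}}$, so all the limit assertions reduce to dominated convergence applied to the finite measures $\langle E(\cdot)f,f\rangle$.
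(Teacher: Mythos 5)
Your write-up actually proves the statement at hand, Theorem \ref{Hilbert space} (despite the opening sentence, which says you are about to prove Theorem \ref{ergodic} — that is only a slip, since the body addresses the splitting itself). Note that the paper offers no proof of this theorem: it is stated with the citation \cite{B4}, so there is nothing in-paper to compare against, and your task reduces to supplying the standard argument, which you do correctly and completely. The factorization of the multiparameter average into $A_{N_1}(U_1)\cdots A_{N_k}(U_k)$, von Neumann's mean ergodic theorem, the commutation of the limit projections (justified by the uniform contraction bound), and the telescoping identity give part (i), identifying $\mathcal{H}_{erg}$ with $\ker\bigl(P_{U_1}\cdots P_{U_k}\bigr)=\mathcal{H}_{inv}^{\perp}$. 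For part (ii) you isolate the one point of substance: that $\bigcup_{\mathbf m}V_{\mathbf m}$ is a directed union (pass to least common multiples, which uses that every $m_i\neq 0$), hence already a linear subspace whose closure is $\mathcal{H}_{rat}$, so that $\mathcal{H}_{tot}=\mathcal{H}_{rat}^{\perp}$. Your reading of ``non-zero $k$-tuple'' as a tuple of non-zero integers (equivalently, a single common exponent) is the intended one, and it matches exactly how the theorem is used later in the paper, where $\mathcal{H}_{rat}=\overline{\bigcup_{q}\mathcal{H}_q}$ with $\mathcal{H}_q=\{f:\ T_i^{q!}f=f\ \text{for all}\ i\}$ and where membership in $\mathcal{H}_{tot}$ is exploited through the vanishing of the spectral measure on $(\mathbb{Q}/\mathbb{Z})^{k+l}$ — a fact your alternative argument via the joint spectral resolution makes transparent, and which is arguably the cleanest route since it handles (i) and (ii) simultaneously by dominated convergence.
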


We will also need the following version of the classical Bochner-Herglotz theorem.
\begin{Theorem}
\label{BH}
Let $U_1, \cdots, U_k$ be commuting unitary operators on a Hilbert space $\mathcal{H}$ and $f \in \mathcal{H}$. Then there is a measure $\nu_f$ on $\mathbb{T}^k$ such that 
$$ <U_1^{n_1} U_2^{n_2} \cdots U_k^{n_k} f , f >  \, \, = \int_{\mathbb{T}^k} e^{2 \pi i (n_1 \gamma_1 + \cdots + n_k \gamma_k)} \, d \nu_f (\gamma_1, \cdots , \gamma_k), $$
for any $(n_1, n_2, \cdots , n_k) \in \mathbb{Z}^k$.
\end{Theorem}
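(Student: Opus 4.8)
\emph{Proof proposal.} The plan is to recognize the function $\mathbf{n}\mapsto\langle U^{\mathbf{n}}f,f\rangle$ on the group $\mathbb{Z}^k$ — where we abbreviate $U^{\mathbf{n}}=U_1^{n_1}\cdots U_k^{n_k}$ for $\mathbf{n}=(n_1,\dots,n_k)$ — as a bounded positive-definite function, and then to apply Bochner's theorem for locally compact abelian groups in its incarnation for $\mathbb{Z}^k$, whose Pontryagin dual is $\mathbb{T}^k$. The measure $\nu_f$ is precisely the one furnished by that theorem.

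First I would verify positive-definiteness. Since the $U_i$ commute and are unitary, $U^{\mathbf{m}}U^{\mathbf{n}}=U^{\mathbf{m}+\mathbf{n}}$ and $(U^{\mathbf{n}})^{*}=U^{-\mathbf{n}}$ for all $\mathbf{m},\mathbf{n}\in\mathbb{Z}^k$; hence, for any finite collection $\mathbf{n}_1,\dots,\mathbf{n}_r\in\mathbb{Z}^k$ and scalars $c_1,\dots,c_r\in\mathbb{C}$,
\[
\sum_{i,j=1}^{r}c_i\overline{c_j}\,\langle U^{\mathbf{n}_i-\mathbf{n}_j}f,f\rangle=\sum_{i,j=1}^{r}c_i\overline{c_j}\,\langle U^{\mathbf{n}_i}f,U^{\mathbf{n}_j}f\rangle=\left\|\sum_{i=1}^{r}c_i U^{\mathbf{n}_i}f\right\|^{2}\ge 0 .
\]
Together with $\langle U^{\mathbf{0}}f,f\rangle=\|f\|^{2}$ and $|\langle U^{\mathbf{n}}f,f\rangle|\le\|f\|^{2}$, this shows that $\phi(\mathbf{n}):=\langle U^{\mathbf{n}}f,f\rangle$ is a bounded positive-definite function on $\mathbb{Z}^k$.

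Second, I would invoke Bochner's theorem on $\mathbb{Z}^k$: every positive-definite function on $\mathbb{Z}^k$ is the Fourier transform of a unique finite positive Borel measure on $\widehat{\mathbb{Z}^k}\cong\mathbb{T}^k$. Applied to $\phi$ this yields a finite positive measure $\nu_f$ on $\mathbb{T}^k$ with $\langle U_1^{n_1}\cdots U_k^{n_k}f,f\rangle=\int_{\mathbb{T}^k}e^{2\pi i(n_1\gamma_1+\cdots+n_k\gamma_k)}\,d\nu_f(\gamma)$ for every $\mathbf{n}\in\mathbb{Z}^k$, which is exactly the assertion. Alternatively one can bypass the group-theoretic formulation and use the joint spectral theorem for the commuting unitaries $U_1,\dots,U_k$: there is a projection-valued measure $E$ on $\mathbb{T}^k$ with $U_j=\int_{\mathbb{T}^k}e^{2\pi i\gamma_j}\,dE(\gamma)$ for each $j$, and then $\nu_f(\cdot):=\langle E(\cdot)f,f\rangle$ does the job. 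The only substantive point — and where essentially all of the real work sits — is the passage from the purely algebraic positive-definiteness identity to an honest Borel measure on $\mathbb{T}^k$; for $k=1$ this is the classical Herglotz theorem, and the general case follows either from the Bochner theorem on $\mathbb{Z}^k$ directly, or by induction on $k$ via the Riesz representation theorem on $C(\mathbb{T}^k)$ combined with a Fej\'er/Stone--Weierstrass approximation argument. Since all of this is entirely classical, the ``proof'' is mainly a matter of citing the appropriate source.
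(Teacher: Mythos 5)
Your proposal is correct: the positive-definiteness computation is valid (using commutativity and $(U^{\mathbf{n}})^*=U^{-\mathbf{n}}$), and the passage to a finite positive measure on $\mathbb{T}^k$ via the Herglotz--Bochner theorem for the discrete group $\mathbb{Z}^k$ (or, equivalently, via the joint spectral theorem for the commuting unitaries) is exactly the standard argument. The paper itself gives no proof of this statement --- it simply invokes it as ``a version of the classical Bochner--Herglotz theorem'' --- so your write-up supplies the routine details the authors leave to a citation, and nothing more needs to be said.
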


\begin{proof}[Proof of Theorem \ref{ergodic}]
Consider Hilbert space splitting $\mathcal{H} = \mathcal{H}_{inv} \oplus \mathcal{H}_{erg}$. For $f \in \mathcal{H}_{inv}$, $U_1^{[p_n^{c_1}]} \cdots U_k^{[p_n^{c_k}]} f = f$. So let us assume that $f \in \mathcal{H}_{erg}$ and show that 
\begin{equation}
\label{averaging sum}
 \lim_{N \rightarrow \infty} \frac{1}{N} \sum_{n=1}^N U_1^{[p_n^{c_1}]} \cdots U_k^{[p_n^{c_k}]} f   = 0 . 
 \end{equation}

This will follow from Proposition \ref{ud lemma} and Theorem \ref{BH}. We have
\begin{eqnarray*}
 \left|\left| \frac{1}{N} \sum_{n=1}^N U_1^{[p_n^{c_1}]} \cdots U_k^{[p_n^{c_k}]}  f \right|\right|_2^2 
 &=& \frac{1}{N^2} \sum_{m,n=1}^N \langle U_1^{[p_m^{c_1}]} \cdots U_k^{[p_m^{c_k}]}  f, U_1^{[p_n^{c_1}]} \cdots U_k^{[p_n^{c_k}]} f \rangle \\
 &=&\frac{1}{N^2} \sum_{m,n=1}^N \langle U_1^{[p_m^{c_1}]  - [p_n^{c_1}]}  \cdots U_k^{[p_m^{c_k}] -  [p_n^{c_k}]} f, f \rangle \\
 &=&\frac{1}{N^2} \sum_{m,n=1}^N \int e(([p_m^{c_1}] - [p_n^{c_1}] , \cdots ,  [p_m^{c_k}] -  [p_n^{c_k}]) \cdot \bold{\gamma}) \, d \nu_f (\bold{\gamma})\\
&=& \int \left| \frac{1}{N} \sum_{n=1}^N e ( ([p_n^{c_1}],  \cdots , [p_n^{c_k}]) \cdot \bold{\gamma}) \right|^2 \, d \nu_f(\bold{\gamma})
\end{eqnarray*}
Since $f \in H_{erg}$, we have $\nu_f (\{(0, \dots, 0)\})  = 0$, so that, for our $f$, \eqref{averaging sum} follows. 
\end{proof}

\begin{Corollary}
\label{ergodic sequence}
Let $c_1, c_2, \dots , c_k$ be positive, non-integers. Let $T_1, T_2, \dots , T_k$ be commuting, invertible measure preserving transformations on a probability space $(X, \mathcal{B}, \mu)$. 
Then, for any $A \in \mathcal{B}$ with $\mu(A) > 0$, one has 
$$\lim_{N \rightarrow \infty} \frac{1}{N} \sum_{n=1}^{ N} \mu(A \cap T_1^{-[p_{n}^{c_1}]} \cdots T_k^{-[p_n^{c_k}]} A) \geq \mu^2(A).$$
\end{Corollary}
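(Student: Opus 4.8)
The plan is to deduce Corollary~\ref{ergodic sequence} from Theorem~\ref{ergodic} by the standard Hilbert-space trick of applying the mean ergodic statement to the Koopman operators and then integrating. First I would set $\mathcal{H} = L^2(X,\mathcal{B},\mu)$ and let $U_i$ be the unitary operator induced by $T_i$, i.e. $U_i g = g \circ T_i$; these are commuting unitary operators since the $T_i$ commute and are measure preserving. Let $f = \mathbf{1}_A \in \mathcal{H}$, so that $\mu(A \cap T_1^{-[p_n^{c_1}]} \cdots T_k^{-[p_n^{c_k}]} A) = \langle U_1^{[p_n^{c_1}]} \cdots U_k^{[p_n^{c_k}]} \mathbf{1}_A, \mathbf{1}_A\rangle$. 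Averaging over $n \leq N$ and using Theorem~\ref{ergodic}, the Cesàro averages $\frac1N \sum_{n=1}^N U_1^{[p_n^{c_1}]} \cdots U_k^{[p_n^{c_k}]} f$ converge in norm to $f^* = P_{\mathrm{inv}} f$, the orthogonal projection of $\mathbf{1}_A$ onto $\mathcal{H}_{\mathrm{inv}}$. Since the inner product is continuous, it follows that
\[
\lim_{N \to \infty} \frac1N \sum_{n=1}^N \mu(A \cap T_1^{-[p_n^{c_1}]} \cdots T_k^{-[p_n^{c_k}]} A) = \langle f^*, \mathbf{1}_A \rangle = \langle f^*, f^* \rangle = \|f^*\|^2,
\]
where the middle equality uses that $f^*$ lies in the closed subspace $\mathcal{H}_{\mathrm{inv}}$ onto which $P_{\mathrm{inv}}$ projects, so $\langle P_{\mathrm{inv}} f, f\rangle = \langle P_{\mathrm{inv}} f, P_{\mathrm{inv}} f\rangle$.

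It then remains to bound $\|f^*\|^2$ from below by $\mu(A)^2$. Since $\mathbf{1}_X \in \mathcal{H}_{\mathrm{inv}}$ (it is fixed by every $U_i$) and has norm $1$, projecting onto $\mathcal{H}_{\mathrm{inv}}$ can only lose mass in directions orthogonal to $\mathbf{1}_X$, so $\|f^*\| \ge |\langle f, \mathbf{1}_X\rangle| / \|\mathbf{1}_X\| = \langle \mathbf{1}_A, \mathbf{1}_X\rangle = \mu(A)$. Hence $\|f^*\|^2 \ge \mu(A)^2$, which gives the asserted inequality. (Equivalently: $\|f^*\| = \|P_{\mathrm{inv}} f\| \ge \|P_{\mathbf{1}_X} f\|$ where $P_{\mathbf{1}_X}$ is projection onto the one-dimensional span of $\mathbf{1}_X \subset \mathcal{H}_{\mathrm{inv}}$, and $P_{\mathbf{1}_X} \mathbf{1}_A = \mu(A) \mathbf{1}_X$.)

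This argument is essentially routine once Theorem~\ref{ergodic} is in hand; there is no serious obstacle. The only points requiring a little care are: confirming that the operators $U_i$ are genuinely unitary (invertibility of the $T_i$ is used here, which is part of the hypothesis), justifying the interchange of the limit with the inner product (this is just continuity of $\langle \cdot, \mathbf{1}_A\rangle$ together with norm convergence from Theorem~\ref{ergodic}), and the identity $\langle P_{\mathrm{inv}} f, f\rangle = \|P_{\mathrm{inv}} f\|^2$, which is the self-adjointness and idempotence of an orthogonal projection. Note also that the non-integrality hypothesis on the $c_i$ is exactly what is needed to invoke Theorem~\ref{ergodic} (through Proposition~\ref{ud lemma}(i)), and no further input is required.
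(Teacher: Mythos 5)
Your argument is essentially the paper's own proof: apply Theorem~\ref{ergodic} to the Koopman operators $U_i g = g\circ T_i$ with $f=\mathbf{1}_A$, pass the norm-convergent Ces\`aro averages through the inner product, and bound $\lVert P_{\mathrm{inv}}f\rVert^2$ from below by $\mu(A)^2$ using the constant function (the paper phrases this as $\langle Pf,Pf\rangle \geq (\int Pf\,d\mu)^2$, which is the same Cauchy--Schwarz/one-dimensional-projection estimate you give).

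There is, however, one small but genuine mismatch of hypotheses that you explicitly wave away: you assert that ``no further input is required'' to invoke Theorem~\ref{ergodic}, but that theorem assumes the exponents $c_1,\dots,c_k$ are \emph{distinct}, whereas the Corollary does not. As written, your appeal to Theorem~\ref{ergodic} fails when some $c_i$ coincide. The paper handles this with a preliminary reduction: if, say, $c_1=c_2$, then $T_1^{-[p_n^{c_1}]}T_2^{-[p_n^{c_2}]} = (T_1T_2)^{-[p_n^{c_1}]}$, so one may regroup and collapse repeated exponents, replacing the corresponding transformations by their product (which is again a measure preserving transformation commuting with the rest), and thereby assume distinctness without loss of generality. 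Adding this one-line reduction makes your proof complete and identical in substance to the paper's.
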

\begin{proof}
Without loss of generality we can assume that $c_1, c_2, \dots , c_k$ are distinct by regrouping and collapsing some of the $T_i$. (For example, if $c_1= c_2$, $T_1^{-[p_n^{c_1}]} T_2^{-[p_n^{c_2}]} = (T_1 T_2)^{- [p_n^{c_1}]}$).
Let $f= 1_A$. A measure preserving transformation $T_i$ can be considered as a unitary operator  $T_i f = f \circ T_i $. Denote by $P$ the projection on $ \mathcal{H}_{inv}$ for $T_1 , \dots , T_k$. Then we have
\begin{eqnarray*}
 &&\lim_{N \rightarrow \infty} \frac{1}{N} \sum_{n=1}^N \mu(A \cap T_1^{-[p_{n}^{c_1}]} \cdots T_k^{-[p_n^{c_k}]}A) =  \lim_{N \rightarrow \infty} \frac{1}{N} \sum_{n=1}^N \int f \, T_1^{[p_{n}^{c_1}]} \cdots T_k^{[p_n^{c_k}]} f \, d \mu \\
&&= \int f Pf \, d\mu = \langle f, Pf \rangle = \langle f, P^2 f \rangle = \langle Pf, Pf \rangle 
\geq \left( \int Pf \, d \mu \right)^2 = \mu^2(A).
\end{eqnarray*} 
\end{proof}

Recall that the upper Banach density of a set $E \subset \mathbb{Z}^k$ is defined to be
$$d^*(E) = \sup_{\{\Pi_n\}_{n \in \mathbb{N}}} \limsup_{n \rightarrow \infty} \frac{|E \cap \Pi_n|}{| \Pi_n|},$$
where the supremum is taken over all sequences of parallelepipeds
$$ \Pi_n = [a_n^{(1)}, b_n^{(1)}] \times \cdots \times [a_n^{(k)}, b_n^{(k)} ] \subset \mathbb{Z}^k, \,\, n \in \mathbb{N},$$
with $b_n^{(i)} - a_n^{(i)} \rightarrow \infty$, $1 \leq i \leq k$.

By the $\mathbb{Z}^k$-version of Furstenberg's correspondence principle (see, for example, Proposition 7.2 in \cite{BMc}), 
given $E \subset \mathbb{Z}^k$ with $d^*(E)>0$, there is a probability space $(X, \mathcal{B} , \mu)$, commuting invertible measure preserving transformations $T_1, T_2, \dots , T_k$ of $X$ and $A \in \mathcal{B} $ with $d^*(E) = \mu(A)$ such that for any $\bold{n_1}, \bold{n_2}, \dots , \bold{n_m} \in \mathbb{Z}^k $ one has
$$d^*(E \cap (E- \bold{n_1}) \cap (E- \bold{n_2}) \cap \cdots \cap (E - \bold{n_m})) \geq \mu(A \cap T^{- \bold{n_1}}A \cap \cdots \cap T^{-\bold{n_m}} A),$$
where for $\bold{n} = (n_1, \dots , n_k)$, $T^{\bold{n}} = T_1^{n_1} \cdots T_k^{n_k}.$

We see now that Corollary \ref{ergodic sequence} together with Furstenberg's correspondence principle implies the following result.

\begin{Corollary}
\label{prop}
Let $c_1, \cdots ,c_k$ be positive non-integers.
If $E \subset \mathbb{Z}^k$ with ${d^*}(E) > 0$, then there exists a prime $p$ such that $([p^{c_1}], \cdots , [p^{c_k}] ) \in E - E$. 
Moreover,
$$\liminf_{N \rightarrow \infty} \frac{ | \{ p \leq N:   ([p^{c_1}], \cdots , [p^{c_k}] ) \in E - E\} | }{ \pi(N) } \geq {d^*}(E)^2.$$
\end{Corollary}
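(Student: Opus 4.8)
The plan is to derive Corollary~\ref{prop} as a direct consequence of Corollary~\ref{ergodic sequence} via the $\mathbb{Z}^k$-version of Furstenberg's correspondence principle recalled just above the statement. First I would apply the correspondence principle to the set $E \subset \mathbb{Z}^k$ with $d^*(E) > 0$: this produces a probability space $(X, \mathcal{B}, \mu)$, commuting invertible measure preserving transformations $T_1, \dots, T_k$, and a set $A \in \mathcal{B}$ with $\mu(A) = d^*(E)$ such that, for every $\mathbf{n} = (n_1, \dots, n_k) \in \mathbb{Z}^k$, one has
$$d^*\big(E \cap (E - \mathbf{n})\big) \geq \mu(A \cap T^{-\mathbf{n}} A),$$
where $T^{\mathbf{n}} = T_1^{n_1} \cdots T_k^{n_k}$. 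Applying this with $\mathbf{n} = ([p^{c_1}], \dots, [p^{c_k}])$ for each prime $p$, I get
$$d^*\big(E \cap (E - ([p^{c_1}], \dots, [p^{c_k}]))\big) \geq \mu(A \cap T_1^{-[p^{c_1}]} \cdots T_k^{-[p^{c_k}]} A).$$

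Next I would invoke Corollary~\ref{ergodic sequence} with this system and this $A$ (its hypotheses hold since the $c_i$ are positive non-integers and $\mu(A) = d^*(E) > 0$), which gives
$$\lim_{N \rightarrow \infty} \frac{1}{N} \sum_{n=1}^N \mu(A \cap T_1^{-[p_n^{c_1}]} \cdots T_k^{-[p_n^{c_k}]} A) \geq \mu^2(A) = d^*(E)^2 > 0.$$
In particular the average is positive, so at least one summand is positive; hence there exists a prime $p$ with $\mu(A \cap T_1^{-[p^{c_1}]} \cdots T_k^{-[p^{c_k}]} A) > 0$, and therefore $d^*(E \cap (E - ([p^{c_1}], \dots, [p^{c_k}]))) > 0$, which forces $([p^{c_1}], \dots, [p^{c_k}]) \in E - E$. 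This proves the first assertion.

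For the quantitative ``Moreover'' part, I would use the elementary observation that each summand $\mu(A \cap T_1^{-[p_n^{c_1}]} \cdots T_k^{-[p_n^{c_k}]} A)$ lies in $[0,1]$, and that whenever it is positive we have $([p_n^{c_1}], \dots, [p_n^{c_k}]) \in E - E$ by the inequality above. Writing $G_N = \{ p \leq N : ([p^{c_1}], \dots, [p^{c_k}]) \in E - E \}$ and noting $\pi(N) \sim N$ counts the same primes as $n \leq \pi(N)$, I bound
$$\frac{1}{N} \sum_{n=1}^N \mu(A \cap T_1^{-[p_n^{c_1}]} \cdots T_k^{-[p_n^{c_k}]} A) \leq \frac{1}{N}\, \#\{ n \leq N : \mu(A \cap \cdots) > 0 \} \leq \frac{|G_{p_N}|}{N},$$
and comparing $N$ with $\pi(p_N) = N$, taking $\liminf$ as $N \to \infty$ and combining with the lower bound from Corollary~\ref{ergodic sequence} yields
$$\liminf_{N \rightarrow \infty} \frac{|\{ p \leq N : ([p^{c_1}], \dots, [p^{c_k}]) \in E - E \}|}{\pi(N)} \geq d^*(E)^2.$$
There is no serious obstacle here; the only point requiring a little care is the bookkeeping to pass from the average over $n \leq N$ (indexing primes by $p_n$) to the relative density over $p \leq N$, which is routine given $\pi(N) \to \infty$ and the monotone reindexing $n \leftrightarrow p_n$.
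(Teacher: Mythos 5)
Your proposal is correct and takes essentially the same route as the paper: Furstenberg's correspondence principle combined with Corollary~\ref{ergodic sequence}, followed by an elementary counting step. The paper bounds the number of good primes below by $\sum_{p \leq N} d^*\bigl(E \cap (E - ([p^{c_1}],\dots,[p^{c_k}]))\bigr)$ using $d^* \leq 1$, which is the same device as your bound via the number of positive summands $\mu(A \cap T_1^{-[p_n^{c_1}]}\cdots T_k^{-[p_n^{c_k}]}A) \leq 1$.
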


\begin{proof}
By a special case of Furstenberg's correspondence principle, given $E \subset \mathbb{Z}^k$ with $d^*(E)>0$, there exist a probability space $(X, \mathcal{B} , \mu)$, commuting invertible measure preserving transformations $T_1, \dots , T_k$ of $X$ and $A \in \mathcal{B} $ with $d^*(E) = \mu(A)$ such that for any $l_1, l_2, \dots , l_k \in \mathbb{Z} $ one has
$$d^*(E \cap (E- (l_1, l_2, \cdots, l_k)) \geq \mu(A \cap T_1^{-l_1} T_2^{-l_2} \cdots T_k^{-l_k}A).$$
Note that 
\begin{align*}
\left| \{ p \leq N:   ([p^{c_1}], \cdots , [p^{c_k}] ) \in E - E\} \right| &\geq \left| \{ p \leq N:  d^*(E \cap E - ([p^{c_1}], \cdots , [p^{c_k}] ) >0 \} \right| \\
&\geq \sum_{p \leq N} d^*(E \cap E - ([p^{c_1}], \cdots , [p^{c_k}] ) )\\
&\geq \sum_{p \leq N} \mu(A \cap T_1^{-[p^{c_1}]} T_2^{-[p^{c_2}]} \cdots T_k^{-[p^{c_k}]} A).
 \end{align*}
Hence, by Corollary \ref{ergodic sequence},
\begin{align*}
&\liminf_{N \rightarrow \infty} \frac{ \left| \{ p \leq N:   ([p^{c_1}], \cdots
     , [p^{c_k}] ) \in E - E\} \right| }{\pi(N)}\\
&\qquad\geq \lim_{N \rightarrow
   \infty} \frac{1}{\pi(N)} \sum_{p \leq N} \mu(A \cap T_1^{-[p^{c_1}]}
 T_2^{-[p^{c_2}]} \cdots T_k^{-[p^{c_k}]} A) \\ 
&\qquad\geq \mu(A)^2 = d^*(E)^2.
 \end{align*}
\end{proof}

\begin{Remark}
It is not hard to see that Theorem \ref{ergodic}, Corollary \ref{ergodic sequence} and Corollary \ref{prop} remain true if one replaces in the formulations $([p^{c_1}], \cdots , [p^{c_k}] )$ by $([(p-h)^{c_1}], \cdots , [(p-h)^{c_k}] ) $ for any integer $h$. We will utilize this remark for $h= \pm 1$ in the next section. 
\end{Remark}

\section{Application to Nice $FC^+$ sets}


\begin{Definition}
A sequence $(\bold{d}_n)_{n \in \mathbb{N}}$ in $\mathbb{Z}^k$ is called {\it{ergodic}} if the following mean ergodic theorem is valid: for any ergodic measure preserving $\mathbb{Z}^k$-action $T=(T^{\bold{m}})_{(\bold{m} \in \mathbb{Z}^k)}$ on a probability space $(X, \mathcal{B}, \mu)$, 
$$\lim_{N \rightarrow \infty} \frac{1}{N} \sum_{n=1}^N f \circ T^{\bold{d}_n} = \int f \, d \mu  \,\,\, \textrm{for any} \,\, f \in L^2(\mu).$$ 
\end{Definition}
Recall that a subset $D$ of $\mathbb{Z}^k$ is {\it{a set of recurrence}} if given any measure preserving $\mathbb{Z}^k$-action $T=(T^{\bold{m}})_{(\bold{m} \in \mathbb{Z}^k)}$ on a probability space $(X, \mathcal{B}, \mu)$ and any set $A \in \mathcal{B}$ with $\mu(A) > 0$, there exists $\bold{d} \in D$ $(\bold{d} \ne 0)$ such that 
$$\mu(A \cap T^{-\bold{d}} A) > 0. $$

\begin{Definition} 
Let $D$ be a subset of $\mathbb{Z}^k$.
We will write $D= \{\bold{d}_n : n \in \mathbb{N} \}$ with the convention that $\bold{d}_n$ are pairwise distinct and the sequence $(|\bold{d}_n|)$ is non-decreasing. (Here $|\bold{d}| = \sup_{1 \leq i \leq k} |d_i|$ for $\bold{d} = (d_1, d_2, \cdots, d_k)$.)
\begin{enumerate}
\item (cf.\cite{B2})
A set $D$ is {\it{a set of nice recurrence}} if given any measure preserving $\mathbb{Z}^k$-action $T=(T^{\bold{m}})_{(\bold{m} \in \mathbb{Z}^k)}$ on a probability space $(X, \mathcal{B}, \mu)$, any set $A \in \mathcal{B}$ with $\mu(A) > 0$ and any $\epsilon >0$, we have 
$$\mu(A \cap T^{-\bold{d}} A) \geq \mu^2(A) - \epsilon $$
for infinitely many $\bold{d} \in D$.
\item  (cf.\cite{BK1} and \cite{BL}) A set $D$ is {\it{an averaging set of recurrence}} if given any measure preserving $\mathbb{Z}^k$-action $T=(T^{\bold{m}})_{(\bold{m} \in \mathbb{Z}^k)}$ on a probability space $(X, \mathcal{B}, \mu)$ and any set $A \in \mathcal{B}$ with $\mu(A) > 0$  we have 
$$\limsup_{N \rightarrow \infty } \frac{1}{N} \sum_{n=1}^N \mu(A \cap T^{-\bold{d}_n} A) > 0 .$$
\end{enumerate}
\end{Definition} 

\begin{Definition}[cf.\cite{BL}, Definition 1.2.1] 
A subset $D$ of $\mathbb{Z}^k \backslash \{0\}$ is {\it{a van der Corput set}} ({\it{vdC set}}) if for any family $(u_{\bold{n}})_{\bold{n} \in \mathbb{Z}^k}$ of complex numbers of modulus $1$ such that 
$$\forall \bold{d} \in D, \,\,  \lim_{N_1, \cdots, N_k \rightarrow \infty} \frac{1}{N_1 \cdots N_k} \sum_{\bold{n} \in \prod_{i=1}^k [0, N_i)} u_{\bold{n}+\bold{d}} \overline{u_{\bold{n}}} = 0$$
we have $$\lim_{N_1, \cdots, N_k \rightarrow \infty} \frac{1}{N_1 \cdots N_k } \sum_{\bold{n} \in \prod_{i=1}^k [0, N_i)} u_{\bold{n}} = 0.$$
 \end{Definition}

\begin{Definition}[\cite{BL}]
An infinite set $D$ of $\mathbb{Z}^k$ is {\it{a nice $FC^+$ set}} if for any positive finite measure $\sigma$ on $\mathbb{T}^k$,
$$\sigma( \{ (0,0, \cdots, 0) \} ) \leq \limsup_{|\bold{d}| \rightarrow \infty, \bold{d} \in D} |\hat{\sigma}(\bold{d})|. $$
 \end{Definition} 
 
\begin{Remark}  The following results are obtained in \cite{BL} for sets in $\mathbb{Z}$ and can be generalized to $\mathbb{Z}^k$ .
\begin{enumerate} 
\item An ergodic sequence in $\mathbb{Z}^k$ is an averaging set of recurrence and a set of nice recurrence.
 This can be obtained by using the same argument as in the proof of Corollary \ref{ergodic sequence}.
\item A nice $FC^+$ set in $\mathbb{Z}^k$ is a set of nice recurrence. The proof for $\mathbb{Z}$  was given in \cite{BL}. Here we add the proof for reader's convenience. Let $T = (T^{\bold{n}})_{\bold{n} \in \mathbb{Z}^k}$ be a measure preserving $\mathbb{Z}^k$-action on a probability space $(X, \mathcal{B}, \mu)$ and $A \in \mathcal{B}$. Then there exists a positive measure $\sigma$ on $\mathbb{T}^k$ such that $\hat{\sigma}(\bold{n}) = \mu (A \cap T^{- \bold{n}}A)$ and $\sigma(\{ (0,0, \cdots, 0)\}) \geq \mu(A)^2$. Then the result follows. 
\item A nice $FC^+$ set is a vdC set. This is an immediate consequence of the following spectral characterization of vdC sets (see Theorem 1.8 in \cite{BL}):
  A set $D \subset \mathbb{Z}^k$ is vdC  if and only if any positive measure $\sigma$ on $\mathbb{T}^k$ with $\hat{\sigma}(\bold{d}) = 0$ for all $\bold{d} \in D$ satisfies $\sigma(\{(0,0, \cdots,0 \}) = 0.$
\end{enumerate}

\end{Remark}

 Next we also obtain the following result, which can be viewed as an extension of S\'{a}rk{\H o}zy's Theorem. (See \cite{Sa1} and \cite{Sa3}.)
\begin{Theorem}
\label{sarkozy type}
If $\alpha_i$ are positive integers and $\beta_i$ are positive and non-integers, then
$$ D_1 = \{ \left(  (p-1)^{\alpha_1}, \cdots , (p-1)^{\alpha_k}, [(p-1)^{\beta_1}], \cdots , [(p-1)^{\beta_l}]  \right) | \, p \in\mathcal{P} \},$$
and
$$ D_2 = \{ \left(  (p+1)^{\alpha_1}, \cdots , (p+1)^{\alpha_k}, [(p+1)^{\beta_1}], \cdots , [(p+1)^{\beta_l}]  \right) | \, p \in \mathcal{P} \}$$
are nice $FC^+$ sets in $\mathbb{Z}^{k+l}$, and so they are vdC sets and also sets of nice recurrence.
\end{Theorem}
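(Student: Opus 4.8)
The plan is to verify directly that $D_1$ and $D_2$ are nice $FC^+$ sets in $\mathbb{Z}^{k+l}$; by the Remark following the definition of nice $FC^+$ sets, this automatically gives that they are vdC sets and sets of nice recurrence, so no further work is needed after establishing the $FC^+$ property. I will treat $D_1$ (the case of $D_2$ is identical, using $p+1$ in place of $p-1$, and can be dispatched with the remark that all the equidistribution inputs — in particular Corollary \ref{lemma p-1} and Proposition \ref{ud lemma} — are stated for an arbitrary integer shift $h$). Write $\mathbf{d}_p = \left((p-1)^{\alpha_1}, \dots, (p-1)^{\alpha_k}, [(p-1)^{\beta_1}], \dots, [(p-1)^{\beta_l}]\right) \in \mathbb{Z}^{k+l}$, so $D_1 = \{\mathbf{d}_p : p \in \mathcal{P}\}$, and note that $|\mathbf{d}_p| \to \infty$ as $p \to \infty$, so $\limsup_{|\mathbf{d}| \to \infty,\, \mathbf{d} \in D_1}$ may be computed along the increasing sequence of primes.

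The core step is the following reformulation. Let $\sigma$ be a positive finite measure on $\mathbb{T}^{k+l}$. By positivity and the correlation/Fejér-type argument (the same one used in the proof of Theorem \ref{ergodic} via Theorem \ref{BH}, or directly: expand $\left|\frac{1}{N}\sum_{n=1}^N e(\mathbf{d}_{p_n} \cdot \boldsymbol{\gamma})\right|^2$ and integrate against $\sigma$), one has
\begin{equation*}
\liminf_{N \to \infty} \frac{1}{N}\sum_{n=1}^N |\hat{\sigma}(\mathbf{d}_{p_n})| \geq \limsup_{N \to \infty} \left| \frac{1}{N} \sum_{n=1}^N \hat{\sigma}(\mathbf{d}_{p_n}) \right| = \limsup_{N \to \infty} \left| \int_{\mathbb{T}^{k+l}} \frac{1}{N}\sum_{n=1}^N e\big(\mathbf{d}_{p_n} \cdot \boldsymbol{\gamma}\big)\, d\sigma(\boldsymbol{\gamma}) \right|.
\end{equation*}
Hence it suffices to show that for every $\boldsymbol{\gamma} = (\gamma_1, \dots, \gamma_{k+l}) \ne (0, \dots, 0)$ in $\mathbb{T}^{k+l}$ one has $\frac{1}{N}\sum_{n=1}^N e(\mathbf{d}_{p_n}\cdot\boldsymbol{\gamma}) \to 0$; then by the dominated convergence theorem the integral above tends to $\sigma(\{(0,\dots,0)\})$, which gives exactly $\sigma(\{0\}) \le \liminf_N \frac1N\sum_{n\le N}|\hat\sigma(\mathbf{d}_{p_n})| \le \limsup_{|\mathbf{d}|\to\infty,\,\mathbf{d}\in D_1}|\hat\sigma(\mathbf{d})|$, the required $FC^+$ inequality. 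The relevant exponential sum is
\begin{equation*}
\frac{1}{N}\sum_{n=1}^N e\!\left(\sum_{i=1}^k \gamma_i (p_n - 1)^{\alpha_i} + \sum_{j=1}^l \gamma_{k+j}\, [(p_n-1)^{\beta_j}]\right),
\end{equation*}
which is precisely of the form $\frac1N\sum_{n=1}^N e(g(p_n - 1))$ with $g(x) = \sum_i \gamma_i x^{\alpha_i} + \sum_j \gamma_{k+j}[x^{\beta_j}]$, the $\alpha_i$ positive integers and the $\beta_j$ positive non-integers — i.e. exactly the setting of Proposition \ref{ud lemma} (with shift $h=1$).

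The one genuinely delicate point is that Proposition \ref{ud lemma} does not literally apply to \emph{every} nonzero $\boldsymbol{\gamma}$: part (i) requires all coefficients to be non-integers and part (ii) requires \emph{some} coefficient to be irrational, so I must handle the case where some $\gamma_i$ are rational (possibly integers). I will dispose of this by a reduction. If all of $\gamma_1, \dots, \gamma_{k+l}$ are rational, say with common denominator $q$, then the sequence $\mathbf{d}_{p_n} \bmod 1$ depends only on $p_n \bmod q$ (for the polynomial coordinates) together with $[(p_n-1)^{\beta_j}] \bmod q$; here one uses Proposition \ref{ud lemma}(i) applied only to the (automatically non-integer-coefficient) fractional-part terms — or more simply, one observes that if all $\gamma_{k+j}$ are rational then by the argument in Proposition \ref{ud lemma}(i) the average of $e(\sum_j \gamma_{k+j}[(p_n-1)^{\beta_j}])$ over primes in a fixed residue class mod $q$ tends to $\frac{1}{q^{\#\{j:\,\beta_j\}}}\sum$ of a Riemann-integrable function, giving $0$ unless every $\gamma_{k+j}=0$; and if instead every $\gamma_{k+j}=0$ but some $\gamma_i \ne 0$ is rational, the sum becomes $\frac1N\sum e(\sum_i \gamma_i(p_n-1)^{\alpha_i})$, a polynomial in $p_n$ with rational coefficients not all integers, whose average over primes is $0$ by partitioning $\mathcal{P}$ into residue classes mod $q$ (Lemma \ref{mo}) and using that a nonconstant polynomial with integer-shifted rational argument is equidistributed mod $1$ along each such class — this is Rhin's theorem (used inside the proof of Theorem \ref{ud}) combined with Corollary \ref{lemma p-1}, or can be reduced to Weyl's classical result for polynomials over primes. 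In all remaining cases (some $\gamma_i$ irrational, or all the "surviving" coefficients non-integer) Proposition \ref{ud lemma}(i) or (ii) applies verbatim and yields the vanishing. Assembling these cases shows $\frac1N\sum_{n\le N}e(\mathbf{d}_{p_n}\cdot\boldsymbol\gamma)\to0$ for all $\boldsymbol\gamma\ne 0$, completing the proof for $D_1$; the identical argument with $p+1$ handles $D_2$. I expect the bookkeeping in this rational/integer case-split to be the main obstacle, though it is entirely routine given Proposition \ref{ud lemma} and Lemma \ref{mo}.
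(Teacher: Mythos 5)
Your reduction collapses at exactly the point where the real content of the theorem lies: the claim that $\frac1N\sum_{n\le N} e(\bold{d}_{p_n}\cdot\boldsymbol\gamma)\to 0$ for \emph{every} nonzero $\boldsymbol\gamma\in\mathbb{T}^{k+l}$ is false. Take $\boldsymbol\gamma=(\tfrac12,0,\dots,0)$: since $p-1$ is even for every odd prime, $(p-1)^{\alpha_1}/2\in\mathbb{Z}$ and the average equals $1$ in the limit, not $0$. More generally, for rational $\boldsymbol\gamma$ the quantity $\bold{d}_p\cdot\boldsymbol\gamma$ is \emph{constant} mod $1$ along each residue class $p\equiv t\ (\bmod\ q)$ in the polynomial coordinates (it is not equidistributed there, contrary to what you assert in the all-rational case), and since the primes occupy only the reduced residues, these constants need not average to zero. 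This is precisely the phenomenon that makes the shifts $\pm1$ special, and it means your sufficient condition can never be verified; consequently the dominated-convergence limit of $\int\frac1N\sum e(\bold{d}_{p_n}\cdot\boldsymbol\gamma)\,d\sigma$ is not $\sigma(\{0\})$ but $\int f\,d\sigma$ for a limit function $f$ supported on the rationals, and with no control on the phases of $f$ at rational atoms of $\sigma$ you cannot deduce the $FC^+$ inequality $\sigma(\{\mathbf 0\})\le\limsup_{\bold{d}\in D_1}|\hat\sigma(\bold{d})|$.

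The paper's proof supplies exactly the missing idea. One does not test against all frequencies; instead one invokes Lemma \ref{vdc}: it suffices that for every $q$ there is a subsequence of $D_1$ lying in $D_1^{(q!)}=D_1\cap\bigoplus_j q!\,\mathbb{Z}$ along which $\bold{d}\cdot\bold{x}$ is u.d.\ mod $1$ whenever \emph{some} coordinate of $\bold{x}$ is irrational. On that subsequence $e(\bold{d}\cdot\bold{x})=1$ for all rationals with denominator dividing $q!$, the remaining rationals form a set $B_q$ whose $\sigma$-measure tends to $0$ as $q\to\infty$ by continuity of measure, and the irrational spectrum is killed by equidistribution; this is how the rational atoms of $\sigma$ are tamed. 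Carrying this out requires two ingredients you do not address: Proposition \ref{positive density}, which shows $D_1^{(q!)}$ is nonempty (indeed of positive relative density) -- itself a nontrivial equidistribution statement along primes in arithmetic progressions (Corollary \ref{ud along}) -- and the computation via Lemma \ref{mo} reducing the exponential sums over the subsequence to the full prime sums handled by Proposition \ref{ud lemma}. Your use of Proposition \ref{ud lemma} for frequencies with an irrational coordinate is fine, but without the $q!$-divisibility device the rational case cannot be repaired, so the proposal has a genuine gap.
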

\begin{Remark}
Recall that a set $D$ of positive integers is a van der Corput set (or vdC set) if given a real sequence $(x_n)_{n \in \mathbb{N}}$, equidistribution $\bmod 1$ of $(x_{n+d} -x_n)_{n \in \mathbb{N}}$ for all $d \in D$ implies the equidistribution of  $(x_n)_{n \in \mathbb{N}}$.
Let $\mathcal{P}$ be the set of all prime numbers. It is shown in \cite{KM} that $\mathcal{P} -h$ is a vdC set if and only if $h = \pm 1$. 
Since a nice $FC^+$ set is a vdC set (see section 3.5 in \cite{BL}), we cannot replace $\pm 1$ by any other integer $h$ on Theorem \ref{sarkozy type}.
\end{Remark}

The following lemma, which tells us how to recognize a nice $FC^+$ set, will be utilized in the proof of Theorem \ref{sarkozy type}.
\begin{Lemma}[cf. \cite{BL} Proposition 2.11]
\label{vdc}
Let $D \subset \mathbb{Z}^k$.  For each $q \in \mathbb{N}$, define 
$$D_q := \{ \bold{d} = (d_1, d_2, \dots, d_k) \in D : q! \,\, \textrm{divides} \,\, d_i \,\, \textrm{for} \,\, 1 \leq i \leq k \}.$$
Suppose that, for every $q$, there exists a sequence $(\bold{d}^{q,n})_{n \in \mathbb{N}}$ in $D_q$ such that 

($i$) $|\bold{d}^{q,n}|$ is non-decreasing and 
($ii$) for any $\bold{x}=(x_1, \cdots , x_k) \in \mathbb{R}^k$, if one of $x_i$ is irrational, the sequence $(\bold{x} \cdot \bold{d}^{q,n})_{n \in \mathbb{N}}$ is uniformly distributed $\bmod 1$.

Then $D$ is a nice $FC^+$ set.
\end{Lemma}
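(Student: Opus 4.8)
The plan is to unwind the definition of a nice $FC^+$ set and reduce it to a lower bound on averaged Fourier coefficients along the given sequences. Fix a positive finite measure $\sigma$ on $\mathbb{T}^k$ and put $M:=\limsup_{|\bold{d}|\to\infty,\ \bold{d}\in D}|\hat\sigma(\bold{d})|$; the goal is to prove $\sigma(\{0\})\le M$. A first, routine observation is that condition (ii) forces $|\bold{d}^{q,n}|\to\infty$ as $n\to\infty$, for every $q$: if the values $\bold{d}^{q,n}$ remained in a bounded (hence finite) subset of $\mathbb{Z}^k$, then for $\bold{x}\in\mathbb{R}^k$ with an irrational coordinate the sequence $(\bold{x}\cdot\bold{d}^{q,n})_{n}$ would take only finitely many values $\bmod 1$, and so could not be u.d. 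Since $\bold{d}^{q,n}\in D_q\subseteq D$, and since the $\limsup$ of the modulus of an average of a bounded sequence is at most the $\limsup$ of the modulus of the sequence itself, we get for each $q$
\[
\limsup_{N\to\infty}\left|\frac1N\sum_{n=1}^N\hat\sigma(\bold{d}^{q,n})\right|\ \le\ M .
\]

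The substance of the argument is to bound the left-hand side from below. Writing $\hat\sigma(\bold{d})=\int_{\mathbb{T}^k}e(\bold{d}\cdot\bold{\gamma})\,d\sigma(\bold{\gamma})$ and moving the finite sum inside the integral, I would analyze the behaviour of $\frac1N\sum_{n=1}^N e(\bold{d}^{q,n}\cdot\bold{\gamma})$ on a three-fold partition of $\mathbb{T}^k$: the finite subgroup $Q^{(q)}:=\tfrac1{q!}\mathbb{Z}^k/\mathbb{Z}^k$; the set of $\bold{\gamma}$ with at least one irrational coordinate; and the remaining countable set $Q\setminus Q^{(q)}$, where $Q:=(\mathbb{Q}/\mathbb{Z})^k$. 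On $Q^{(q)}$, the defining divisibility property of $D_q$ (namely $q!\mid d_i^{q,n}$) gives $\bold{d}^{q,n}\cdot\bold{\gamma}\in\mathbb{Z}$, so the average equals $1$ identically. On the set with an irrational coordinate, condition (ii) together with Weyl's criterion makes the average tend to $0$ pointwise, whence, by dominated convergence (here we use that $\sigma$ is finite), the corresponding integral tends to $0$ as $N\to\infty$. On $Q\setminus Q^{(q)}$ I would only use the trivial bound, so that this piece contributes at most $\sigma(Q\setminus Q^{(q)})$ in modulus.

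Assembling the three contributions yields, for each fixed $q$,
\[
\limsup_{N\to\infty}\left|\frac1N\sum_{n=1}^N\hat\sigma(\bold{d}^{q,n})\right|\ \ge\ \sigma\!\left(Q^{(q)}\right)-\sigma\!\left(Q\setminus Q^{(q)}\right)\ \ge\ \sigma(\{0\})-\sigma\!\left(Q\setminus Q^{(q)}\right),
\]
using $0\in Q^{(q)}$. Combining with the upper bound gives $M\ge\sigma(\{0\})-\sigma(Q\setminus Q^{(q)})$ for every $q$. Since $Q^{(q)}\uparrow Q$ as $q\to\infty$, the sets $Q\setminus Q^{(q)}$ decrease to $\emptyset$, so $\sigma(Q\setminus Q^{(q)})\to 0$ by continuity from above of the finite measure $\sigma$; letting $q\to\infty$ gives $M\ge\sigma(\{0\})$. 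As $\sigma$ was arbitrary, $D$ is a nice $FC^+$ set.

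The only genuinely delicate point is the treatment of the rational points lying in $Q\setminus Q^{(q)}$: there condition (ii) gives no information and the average $\frac1N\sum_{n=1}^N e(\bold{d}^{q,n}\cdot\bold{\gamma})$ need not converge at all. This is exactly what the two-parameter structure of the hypothesis is for — one first fixes $q$ and sends $N\to\infty$, absorbing the bad points into a crude error term, and only afterward sends $q\to\infty$, at which point the divisibility by $q!$ built into $D_q$ forces the exceptional set to carry $\sigma$-mass tending to $0$. Everything else — moving the finite sum through the integral, the application of Weyl's criterion and dominated convergence, and continuity of measures — is standard.
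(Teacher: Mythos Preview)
Your proposal is correct and follows essentially the same route as the paper: the three-fold partition of $\mathbb{T}^k$ into $Q^{(q)}$, points with an irrational coordinate, and $Q\setminus Q^{(q)}$, followed by the limit $q\to\infty$, is exactly the paper's argument (written there for $k=1$). The only cosmetic difference is that the paper first passes to a subsequence $N_j$ along which $f_{N_j}(\gamma)$ converges at every rational point before applying dominated convergence on all of $\mathbb{T}^k$, whereas you bypass this by handling the $Q\setminus Q^{(q)}$ contribution with the trivial bound $\sigma(Q\setminus Q^{(q)})$ directly; your version is slightly more economical but the substance is identical.
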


\begin{proof}
For simplicity of notation we will confine ourselves to the case $k=1$. In this case we write $d^{q,n}$ for $\bold{d}^{q,n}$.

We need to show that,  for any positive finite measure $\sigma$ on $\mathbb{T}$,
$$\sigma(\{0\}) \leq \limsup_{d \in D, |d| \rightarrow \infty } |\hat{\sigma} (d)| . $$

Given $q$, define $f_N(x) = \frac{1}{N} \sum_{n=1}^N e(d^{q,n} x)$.
Let $A_q = \{ \frac{a}{q!} : 0 \leq a \leq q!-1, a \in \mathbb{N} \} \subset \mathbb{T}$ and let $B_q = \{ r \in \mathbb{T} \cap \mathbb{Q} : r \notin A_q \}$. 
Then $\lim\limits_{N \rightarrow \infty} f_N(x) = 0$ if $x$ is irrational and $\lim\limits_{N \rightarrow \infty} f_N(x) = 1$ if $x \in A_q$. Since $B_q$ is countable, we can choose a sequence $N_j$ such that $\lim\limits_{N_j \rightarrow \infty} f_{N_j}(x)$ exists for every $x \in B_q$, thus for every $x \in \mathbb{T}$.  
Let
$f(x) := \lim\limits_{N_j \rightarrow \infty} f_{N_j}(x)$. Note that $0 \leq |f(x)| \leq 1$ for all $x$.

By the dominated convergence theorem,
\begin{equation}
\label{fc eqn1}
\int_{\mathbb{T}} f(x) \, d \sigma = \lim_{N_j \rightarrow \infty} \frac{1}{N_j} \sum_{n=1}^{N_j} \int e(d^{q,n}x ) \, d \sigma = \lim_{N_j \rightarrow \infty} \frac{1}{N_j} \sum_{n=1}^{N_j} \hat{\sigma}(d^{q,n}).
\end{equation}

Since $f(x) = 0$ for $x \in \mathbb{T} \backslash \mathbb{Q}$,  
\begin{align}
\label{fc eqn2}
\left| \int_{\mathbb{T}} f(x) \, d \sigma \right| &= \left| \int_{A_q} f(x) \, d \sigma +  \int_{B_q} f(x) \, d \sigma +
 \int_{\mathbb{T} \backslash \mathbb{Q}} f(x) \, d \sigma\right| \nonumber \\
  &=  \left| \int_{A_q} f(x) \, d \sigma +  \int_{B_q} f(x) \, d \sigma \right| \geq \int_{A_q} f(x) \, d \sigma -  \int_{B_q} | f(x)| \, d \sigma \nonumber \\
 &\geq \sigma(A_q) - \sigma(B_q). 
   \end{align}
Also we have 
\begin{align}
\label{fc eqn3}
\limsup\limits_{d \in D, |d| \rightarrow \infty} |\hat{\sigma} (d)| &\geq \limsup\limits_{n \rightarrow \infty} |\hat{\sigma} (d^{q,n})| \nonumber \\
&\geq  \limsup\limits_{N_j \rightarrow \infty} \frac{1}{N_j} \sum_{n=1}^{N_j} | \hat{\sigma} (d^{q,n}) | 
\geq \left| \lim\limits_{N_j \rightarrow \infty} \frac{1}{N_j} \sum_{n=1}^{N_j} \hat{\sigma} (d^{q,n}) \right|. 
\end{align}

From equations (\ref{fc eqn1}), (\ref{fc eqn2}) and (\ref{fc eqn3}),
$$\sigma(A_q) - \sigma(B_q) \leq \limsup\limits_{d \in D, |d| \rightarrow \infty} |\hat{\sigma} (d)|.$$
By the continuity of the measure, $\lim\limits_{q \rightarrow \infty} \sigma(A_q) = \sigma (\mathbb{T} \cap \mathbb{Q})$ and $\lim\limits_{q \rightarrow \infty} \sigma(B_q) = 0$. So,
$$\sigma(\{0\}) \leq \sigma(\mathbb{T} \cap \mathbb{Q}) \leq \limsup_{d \in D, |d| \rightarrow \infty} |\hat{\sigma} (d)|.$$
  \end{proof}

\begin{Proposition}
\label{positive density}
Let $D_1$ and $D_2$ be as in Theorem \ref{sarkozy type} and $D_i^{(r)} = D_i \bigcap (\bigoplus_{j=1}^{k+l}  r \mathbb{Z})$. 
Then $D_i^{(r)}$ has positive relative density in $D_i$ for $i=1,2$. 
\end{Proposition}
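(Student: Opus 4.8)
The plan is to rewrite the divisibility conditions defining $D_i^{(r)}$ as a single congruence together with fractional-part conditions on the prime $p$, and then to count the resulting primes with the help of the equidistribution results of Section~\ref{sec : equidistribution}. I treat $D_1$; the case of $D_2$ is handled verbatim, replacing $p-1$ by $p+1$, the congruence $p\equiv 1$ by $p\equiv -1\ (\mathrm{mod}\ r)$, and $h=1$ by $h=-1$ in the invocation of Corollary~\ref{lemma p-1}.

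First I would dispose of a bookkeeping point. Write $\mathbf{d}(p)=\big((p-1)^{\alpha_1},\dots,(p-1)^{\alpha_k},[(p-1)^{\beta_1}],\dots,[(p-1)^{\beta_l}]\big)$, so $D_1=\{\mathbf{d}(p):p\in\mathcal{P}\}$ and $p\mapsto\mathbf{d}(p)$ is injective, since its first coordinate is strictly increasing in $p$. Every coordinate of $\mathbf{d}(p)$ is a non-decreasing, positive function of $p$, so $\mathbf{d}(p)\in[-n,n]^{k+l}$ holds exactly for an initial segment $\{p\le Q(n)\}$ of primes, with $Q(n)\to\infty$ as $n\to\infty$. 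Hence
\[
\liminf_{n\to\infty}\frac{\bigl|D_1^{(r)}\cap[-n,n]^{k+l}\bigr|}{\bigl|D_1\cap[-n,n]^{k+l}\bigr|}=\liminf_{n\to\infty}\frac{\#\{p\le Q(n):\mathbf{d}(p)\in D_1^{(r)}\}}{\pi(Q(n))}\ \ge\ \liminf_{N\to\infty}\frac{\#\{p\le N:\mathbf{d}(p)\in D_1^{(r)}\}}{\pi(N)},
\]
so it suffices to bound the rightmost $\liminf$ away from $0$.

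Next I would restrict to a tractable subclass of primes. If $p\equiv 1\ (\mathrm{mod}\ r)$ then $r\mid p-1$, hence $r\mid (p-1)^{\alpha_i}$ for every $i$ (as $\alpha_i\ge 1$); and $r\mid [(p-1)^{\beta_j}]$ is equivalent to $\{(p-1)^{\beta_j}/r\}\in[0,1/r)$. So it is enough to show that
\[
S:=\Big\{\,p\in\mathcal{P}\ :\ p\equiv 1\ (\mathrm{mod}\ r)\ \text{ and }\ \{(p-1)^{\beta_j}/r\}\in[0,1/r)\ \text{ for } j=1,\dots,l\,\Big\}
\]
has positive lower density in $\mathcal{P}$; indeed $p\in S$ forces $\mathbf{d}(p)\in D_1^{(r)}$, so $\#\{p\le N:\mathbf{d}(p)\in D_1^{(r)}\}\ge\#\{p\le N:p\in S\}$. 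By the prime number theorem for arithmetic progressions (as in part $(ii)$ of the proof of Corollary~\ref{ud along}) there are $\sim\pi(N)/\phi(r)$ primes $p\le N$ with $p\equiv 1\ (\mathrm{mod}\ r)$, and the box $[0,1/r)^l\subset\mathbb{T}^l$ has measure $r^{-l}$; hence $\#\{p\le N:p\in S\}\sim \pi(N)\phi(r)^{-1}r^{-l}$ — in particular $S$ has lower density $\phi(r)^{-1}r^{-l}>0$ — once we know that, as $p$ ranges over the primes $\equiv 1\ (\mathrm{mod}\ r)$, the sequence $\bigl((p-1)^{\beta_1}/r,\dots,(p-1)^{\beta_l}/r\bigr)$ is uniformly distributed $\bmod 1$ in $\mathbb{T}^l$.

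This last equidistribution is the only substantive step, and here Section~\ref{sec : equidistribution} enters. By the Weyl criterion it suffices to prove, for every $\mathbf{c}=(c_1,\dots,c_l)\in\mathbb{Z}^l\setminus\{0\}$, that $\pi(N)^{-1}\sum_{p\le N,\ p\equiv 1\,(\mathrm{mod}\,r)} e\bigl(\sum_{j}(c_j/r)(p-1)^{\beta_j}\bigr)\to 0$. Detecting the congruence with Lemma~\ref{mo}, this sum equals $\tfrac1r\sum_{a=1}^{r}\sum_{p\le N}e(\xi_a(p-1))$, where $\xi_a(x)=\tfrac{a}{r}x+\sum_{j:\,c_j\ne 0}\tfrac{c_j}{r}x^{\beta_j}$. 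Since $\mathbf{c}\ne 0$, at least one term $x^{\beta_j}$ with $\beta_j\notin\mathbb{Z}$ survives in $\xi_a$ with a non-zero real coefficient, so $\xi_a$ has a non-integer exponent and is exactly of the form treated in Theorem~\ref{ud} (the irrationality hypothesis there being vacuous in this case); thus $(\xi_a(p))_{p\in\mathcal{P}}$ is u.d. $\bmod 1$, and by Corollary~\ref{lemma p-1} so is $(\xi_a(p-1))_{p\in\mathcal{P}}$, giving $\sum_{p\le N}e(\xi_a(p-1))=o(\pi(N))$. Summing over $a$ and dividing by $\#\{p\le N:p\equiv 1\,(\mathrm{mod}\,r)\}\sim\pi(N)/\phi(r)$ gives the claimed equidistribution, and hence the proposition for $D_1$; the proof for $D_2$ is identical. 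I expect the only genuine obstacle to be keeping the hypotheses of Theorem~\ref{ud} straight after Lemma~\ref{mo} has been applied — one must check that a true non-integer power always remains in each $\xi_a$, which holds precisely because every $\beta_j$ is a non-integer (so it differs from the exponent $1$ and from every other $\beta_{j'}$); note in particular that no assumption on the rationality of the $\beta_j$ is needed.
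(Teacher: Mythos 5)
Your argument is essentially the paper's own: restrict to primes in a fixed residue class mod $r$ (you take $t=1$, while the paper allows any $t$ coprime to $r$ with $r\mid(t-1)^{\alpha_i}$), rewrite $r\mid[(p-1)^{\beta_j}]$ as $\{(p-1)^{\beta_j}/r\}\in[0,1/r)$, and finish by equidistribution of $\bigl((p-1)^{\beta_1}/r,\dots,(p-1)^{\beta_l}/r\bigr)$ along that progression, which the paper invokes via Corollary~\ref{ud along} and you rederive from Lemma~\ref{mo} together with Theorem~\ref{ud} and Corollary~\ref{lemma p-1} (correctly observing that no irrationality of the $\beta_j$ is needed). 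One small correction: distinctness of the $\beta_j$ does \emph{not} follow from their non-integrality, as your closing parenthesis asserts; you must, as the paper does, assume without loss of generality that the $\beta_j$ are distinct — harmless, since repeated exponents give identical coordinates and identical divisibility conditions — because otherwise a character $\mathbf{c}\neq 0$ could cancel the non-integer powers in $\xi_a$ and the Weyl-criterion step would fail.
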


\begin{proof}
Let us prove this for $D_1$. Without loss of generality we can assume that all $\beta_i$ are distinct. Note that $\mathcal{P} = \bigcup\limits_{(t,r) =1}( (t+ r\mathbb{Z}) \bigcap \mathcal{P})$ and the relative density of $(t+ r\mathbb{Z}) \bigcap \mathcal{P}$ in $\mathcal{P}$ is $\frac{1}{\phi(r)}$.
Now, if $p \in (t + r \mathbb{Z}) \bigcap \mathcal{P}$, the pair of conditions 
$$ r| (t-1)^{\alpha_i} \, (1 \leq i \leq k) \,\, \text{and} \,\, 0 \leq \left\{ \frac{(p -1)^{\beta_i}}{r} \right\} < \frac{1}{r} \, (1 \leq i \leq l)$$
is equivalent to $\left(  (p-1)^{\alpha_1}, \cdots , (p-1)^{\alpha_k}, [(p-1)^{\beta_1}], \cdots , [(p-1)^{\beta_l}]  \right) \in D_1.$ 
The result follows from that
$ \left( \frac{(p-1)^{\beta_1}}{r}, \cdots ,  \frac{(p-1)^{\beta_l}}{r} \right)$
is uniformly distributed $\bmod 1$ in $\mathbb{T}^l$ along the increasing sequence of primes $p \in t + r \mathbb{Z}$.
The proof for $D_2$ is completely analogous.
\end{proof}

\begin{proof}[Proof of Theorem \ref{sarkozy type}]
Let us prove that $D_1$ is a nice $FC^+$ set. 

Denote $D_1 = (\bold{d}_n)_{n \in \mathbb{N}}$, where $$\bold{d}_n = \left(  (p_n - 1)^{\alpha_1}, \cdots , (p_n - 1)^{\alpha_k}, [(p_n - 1)^{\beta_1}], \cdots , [(p_n - 1)^{\beta_l}]  \right).$$

Enumerate the elements of  $D_1^{(q!)}$ by $ (\bold{d}^{q,n})_{n \in \mathbb{N}}$, where $|\bold{d}^{q,n}|$ is non-decreasing. From Lemma \ref{vdc}, it is sufficient to show that for any $\bold{x} = (x_1, x_2, \cdots, x_{k+l})$, if one of $x_i$ is irrational, $(\bold{d}^{q,n} \cdot \bold{x})_{n \in \mathbb{N}} $ is u.d. $\bmod 1$.

 For any non-zero integer $h$, by Lemma \ref{mo},
\begin{align*}
 &\frac{1}{ | \{ n \leq N : \bold{d}_n \in D_1^{(q!)} \} | } \sum_{n \leq N, \bold{d}_n \in D_1^{(q!)}  } e (h (\bold{d}^{q,n} \cdot \bold{x}))  \\
&= \frac{1}{ | \{ n \leq N : \bold{d}_n \in D_1^{(q!)} \} | } \sum_{n \leq N  } e (h (\bold{d}_{n} \cdot \bold{x})) \frac{1}{(q!)^{k+l}} \sum_{j_1 = 1}^{q!} \cdots \sum_{j_{k+l} = 1}^{q!} e \left(\bold{d}_n \cdot \left(\frac{j_1}{q!}, \cdots , \frac{j_{k+l}}{q!} \right) \right) \\
&= \frac{N}{ | \{ n \leq N : \bold{d}_n \in D_1^{(q!)} \} | } \frac{1}{(q!)^{k+l}} \sum_{j_1 = 1}^{q!} \cdots \sum_{j_{k+l} = 1}^{q!} \frac{1}{N} \sum_{n \leq N}e \left( \bold{d}_n \cdot \left(h \bold{x} +\left(\frac{j_1}{q!}, \cdots , \frac{j_{k+l}}{q!} \right) \right) \right).
\end{align*}
Then the result follows from Proposition \ref{ud lemma} and Proposition \ref{positive density}.
The proof for $D_2$ is completely analogous.
 \end{proof}

\begin{Corollary}
\label{cor nice}
Let $D_1$ and $D_2$ be as in Theorem \ref{sarkozy type}.
If $E \subset \mathbb{Z}^{k+l}$ with ${d^*}(E) > 0$, then for any $\epsilon > 0$
\begin{equation*}
 R_i(E, \epsilon) := \{ \bold{d} \in D_i : d^*(E \cap E - \bold{d} ) \geq d^*(E)^2 - \epsilon \} 
 \end{equation*}
 is infinite for $i=1, 2$.
 \end{Corollary}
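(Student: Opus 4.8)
The plan is to concatenate Theorem \ref{sarkozy type} with the $\mathbb{Z}^{k+l}$-version of Furstenberg's correspondence principle recalled in Section \ref{sec : app}. First I would apply the correspondence principle to $E$: since $d^*(E) > 0$, there exist a probability space $(X, \mathcal{B}, \mu)$, commuting invertible measure preserving transformations $T_1, \dots, T_{k+l}$ of $X$, and a set $A \in \mathcal{B}$ with $\mu(A) = d^*(E)$, such that for every $\bold{n} \in \mathbb{Z}^{k+l}$
$$d^*(E \cap (E - \bold{n})) \geq \mu(A \cap T^{-\bold{n}} A),$$
where $T^{\bold{n}} = T_1^{n_1} \cdots T_{k+l}^{n_{k+l}}$ and $T = (T^{\bold{m}})_{\bold{m} \in \mathbb{Z}^{k+l}}$ denotes the resulting measure preserving $\mathbb{Z}^{k+l}$-action.

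Next I would invoke Theorem \ref{sarkozy type}, which asserts that $D_i$ ($i = 1, 2$) is a nice $FC^+$ set in $\mathbb{Z}^{k+l}$, hence — by part (2) of the Remark following the definition of nice $FC^+$ sets — a set of nice recurrence (in the sense of the definition in Section 4, valid for arbitrary, not necessarily ergodic, measure preserving $\mathbb{Z}^{k+l}$-actions). Applying the defining property of a set of nice recurrence to the action $T$, the set $A$, and the given $\epsilon > 0$, I obtain infinitely many $\bold{d} \in D_i$ for which $\mu(A \cap T^{-\bold{d}} A) \geq \mu^2(A) - \epsilon$. For every such $\bold{d}$, combining with the inequality from the correspondence principle gives
$$d^*(E \cap (E - \bold{d})) \geq \mu(A \cap T^{-\bold{d}} A) \geq \mu^2(A) - \epsilon = d^*(E)^2 - \epsilon,$$
so $\bold{d} \in R_i(E, \epsilon)$; since there are infinitely many such $\bold{d}$, the set $R_i(E, \epsilon)$ is infinite. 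The same argument applies verbatim to $i = 1$ and $i = 2$.

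I do not anticipate a genuine obstacle here: the statement is a straightforward corollary of Theorem \ref{sarkozy type} via the correspondence principle, and the quantitative bound $d^*(E)^2 - \epsilon$ is precisely what "nice recurrence" delivers. The only point deserving a word of care is that one must use the version of "set of nice recurrence" formulated for general measure preserving $\mathbb{Z}^{k+l}$-actions (not merely ergodic ones), which is exactly what the Remark after the definition of nice $FC^+$ sets provides; should one prefer to work with the ergodic formulation instead, one can pass to the ergodic decomposition of $T$ and apply nice recurrence on a set of components of positive measure, but this refinement is unnecessary given the results already established.
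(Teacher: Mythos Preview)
Your proposal is correct and is precisely the argument the paper has in mind: the corollary is stated without an explicit proof, as an immediate consequence of Theorem \ref{sarkozy type} (which shows $D_i$ is a nice $FC^+$ set, hence a set of nice recurrence for arbitrary measure preserving $\mathbb{Z}^{k+l}$-actions by Remark (2)) combined with the $\mathbb{Z}^{k+l}$-Furstenberg correspondence principle recalled in Section \ref{sec : app}. Your care in noting that the Remark supplies nice recurrence for not-necessarily-ergodic actions is exactly the right point to flag.
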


We will see in the next section that the sets $R_i(E, \epsilon)$ actually have positive lower relative density.
 
\section{Uniform distribution and sets of recurrence}

\begin{Theorem}
Let $D_1$ and $D_2$ be as in Theorem \ref{sarkozy type} and enumerate the elements of $D_1$ or $D_2$ as follows (where the sign $-$ corresponds to $D_1$ and sign $+$ corresponds to $D_2$):
 $$\bold{d}_n = \left(  (p_n \pm 1)^{\alpha_1}, \cdots , (p_n \pm 1)^{\alpha_k}, [(p_n\pm 1)^{\beta_1}], \cdots , [(p_n \pm 1)^{\beta_l}]  \right).$$
For each $r \in \mathbb{N}$, let $D_i^{(r)} = D_i \cap \bigoplus\limits_{j=1}^{k+l} r \mathbb{Z}$ and enumerate the elements of $D_i^{(r)}$ by $(\bold{d}_n^{(r) })$ such that  $|\bold{d}_{n}^{(r)}|$ is non-decreasing. 
Let $(T^{\bold{d}})_{\bold{d} \in \mathbb{Z}^{k+l}}$ be a measure preserving $\mathbb{Z}^{k+l}$-action on a probability space $(X, \mathcal{B}, \mu)$.
Then for $A \in \mathcal{B}$ with $\mu(A) > 0$ and $\epsilon > 0$, there exists $r \in \mathbb{N}$ such that
  \begin{equation}
  \label{eqn7}
 \lim_{N \rightarrow \infty} \frac{1}{N} \sum_{n=1}^N \mu(A \cap T^{-\bold{d}_n^{(r)}} A) \geq  \mu(A)^2 - \epsilon.
    \end{equation} 
Moreover,
\begin{enumerate}[(i)]
\item
\begin{equation}
\label{eqn8}
 \{ \bold{d} \in D_i : \mu(A \cap T^{-\bold{d}}A ) \geq \mu^2(A) - \epsilon \} 
 \end{equation}
 has positive lower relative density in $D_i$ for $i=1, 2$. Hence, $D_1$ and $D_2$ are sets of nice recurrence.

\item
  \begin{equation}
  \label{eqn9}
 \lim_{N \rightarrow \infty} \frac{1}{N} \sum_{n=1}^N \mu(A \cap T^{-\bold{d}_n} A) > 0.
  \end{equation} 
Thus $D_1$ and $D_2$ are averaging sets of recurrence.
 \end{enumerate}
 \end{Theorem}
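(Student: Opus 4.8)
The plan is to run the spectral (Herglotz--Bochner) argument of Section 3 against the equidistribution input of Section 2. Set $f=1_A$ and let $\nu_f$ be the positive measure on $\mathbb{T}^{k+l}$ provided by Theorem \ref{BH}, so that $\mu(A\cap T^{-\bold d}A)=\langle T^{\bold d}f,f\rangle=\int_{\mathbb{T}^{k+l}}e(\bold d\cdot\bold\gamma)\,d\nu_f(\bold\gamma)$ for every $\bold d\in\mathbb{Z}^{k+l}$; applying the mean ergodic theorem of Theorem \ref{Hilbert space}(i) to the $\mathbb{Z}^{k+l}$-action and using that the orthogonal projection onto $\mathcal{H}_{inv}$ fixes the constants gives $\nu_f(\{\bold 0\})\ge\mu(A)^2$, as already noted in Section 4. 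Consequently
\[
\frac1N\sum_{n=1}^N\mu(A\cap T^{-\bold{d}_n^{(r)}}A)=\int_{\mathbb{T}^{k+l}}\Bigl(\frac1N\sum_{n=1}^N e(\bold{d}_n^{(r)}\cdot\bold\gamma)\Bigr)\,d\nu_f(\bold\gamma),
\]
and likewise with $\bold{d}_n$ in place of $\bold{d}_n^{(r)}$, so everything reduces to understanding the inner average $A_N^{(r)}(\bold\gamma):=\frac1N\sum_{n=1}^N e(\bold{d}_n^{(r)}\cdot\bold\gamma)$ for fixed $\bold\gamma$.

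The crux is to show that $A_N^{(r)}(\bold\gamma)$ converges as $N\to\infty$ to a limit $\phi_r(\bold\gamma)$ with $|\phi_r(\bold\gamma)|\le1$, that $\phi_r\equiv0$ at every $\bold\gamma$ having an irrational coordinate, and that $\phi_r\equiv1$ on the finite set $Q_r:=(\tfrac1r\mathbb{Z}/\mathbb{Z})^{k+l}$. The irrational case is exactly the uniform distribution statement established in the proof of Theorem \ref{sarkozy type} (run with $r$ in place of $q!$): Lemma \ref{mo} turns the $D_i^{(r)}$-sum into a $D_i$-sum with an added rational shift, which Proposition \ref{ud lemma} shows is u.d.\ $\bmod 1$ (the integer coordinates $(p\pm1)^{\alpha_i}$ being written as $[(p\pm1)^{\alpha_i}]$ to fit the template). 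When $\bold\gamma\in Q_r$ one has $\bold{d}_n^{(r)}\cdot\bold\gamma\in\mathbb{Z}$, since every coordinate of $\bold{d}_n^{(r)}$ is divisible by $r$, so $A_N^{(r)}(\bold\gamma)\equiv1$. For a general rational $\bold\gamma$ with common denominator $M$, both the event $\bold d_n\in D_i^{(r)}$ and the value $e(\bold{d}_n^{(r)}\cdot\bold\gamma)$ are determined by the residue class of $p_n\pm1$ modulo $m:=\operatorname{lcm}(M,r)$ together with which cells of the $\tfrac1m$-grid contain the fractional parts $\{(p_n\pm1)^{\beta_j}/m\}$; the joint equidistribution of this data along the primes --- the $(p\pm1)^{\beta_j}/m$ being jointly u.d.\ $\bmod 1$ and independent of $p\bmod m$, by Theorem \ref{ud}, the Weyl criterion, Lemma \ref{mo}, and prime equidistribution in arithmetic progressions, exactly in the manner of the proofs of Proposition \ref{ud lemma} and Corollary \ref{ud along} --- forces $A_N^{(r)}(\bold\gamma)$ to converge.

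Granting this, \eqref{eqn7} follows: by dominated convergence the limit on its left equals $\int\phi_r\,d\nu_f=\nu_f(Q_r)+\int_{(\mathbb{Q}^{k+l}/\mathbb{Z}^{k+l})\setminus Q_r}\phi_r\,d\nu_f\ge\nu_f(Q_r)-\nu_f\bigl((\mathbb{Q}^{k+l}/\mathbb{Z}^{k+l})\setminus Q_r\bigr)$, and taking $r=s!$ with $s\to\infty$ makes $Q_{s!}$ increase to $\mathbb{Q}^{k+l}/\mathbb{Z}^{k+l}$, so continuity of $\nu_f$ pushes this lower bound up to $\nu_f(\mathbb{Q}^{k+l}/\mathbb{Z}^{k+l})\ge\nu_f(\{\bold 0\})\ge\mu(A)^2$; choose $r$ to be a large enough factorial. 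For (i), apply \eqref{eqn7} with $\epsilon/2$ to obtain such an $r$; since the summands lie in $[0,1]$ and those outside the set in \eqref{eqn8} are $<\mu(A)^2-\epsilon$, an elementary Markov-type averaging argument forces that set, intersected with $D_i^{(r)}$, to have positive lower density in $D_i^{(r)}$ relative to the cubes $[-n,n]^{k+l}$ (here one uses that $|\bold{d}_n^{(r)}|$ is non-decreasing, so index-density and cube-density agree), and multiplying by the positive relative density of $D_i^{(r)}$ in $D_i$ from Proposition \ref{positive density} yields \eqref{eqn8}; in particular \eqref{eqn8} is infinite, so $D_i$ is a set of nice recurrence. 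For (ii), the same spectral computation with the full sequence $\bold{d}_n$ (the case $r=1$) shows the limit in \eqref{eqn9} exists, and bounding $\frac1N\sum_{n\le N}\mu(A\cap T^{-\bold d_n}A)$ below by the sum restricted to those $n$ with $\bold d_n\in D_i^{(r)}$ and letting $N\to\infty$, using Proposition \ref{positive density} and \eqref{eqn7} with $\epsilon=\mu(A)^2/2$, shows this limit is positive; hence $D_i$ is an averaging set of recurrence.

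I expect the main obstacle to be the rational case in the second paragraph: proving that $\frac1N\sum_{n=1}^N e(\bold{d}_n^{(r)}\cdot\bold\gamma)$ genuinely converges for every rational $\bold\gamma$ requires the simultaneous equidistribution, along primes confined to prescribed residue classes, of the residues of the integer polynomials $(p\pm1)^{\alpha_i}$ and of the fractional parts of $(p\pm1)^{\beta_j}/m$, and this must be assembled from Theorem \ref{ud} through the Weyl criterion and Lemma \ref{mo}. Everything else --- the spectral reduction, the continuity-of-measure passage to the limit, and the density bookkeeping in (i) and (ii) --- is routine.
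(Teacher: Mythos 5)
Your argument is correct, and its skeleton (Bochner--Herglotz, reduction to the exponential averages $\frac1N\sum_{n\le N}e(\bold{d}_n^{(r)}\cdot\boldsymbol{\gamma})$, then the density bookkeeping for (i) and (ii) via Proposition \ref{positive density} and a Markov-type argument) coincides with the paper's. The one genuine difference is how \eqref{eqn7} is extracted. The paper invokes the splitting $\mathcal{H}=\mathcal{H}_{rat}\oplus\mathcal{H}_{tot}$ of Theorem \ref{Hilbert space}(ii), writes $1_A=f+g$, approximates $f\in\mathcal{H}_{rat}$ by $f_{\bold{a}}$ with $T^{\bold{a}}f_{\bold{a}}=f_{\bold{a}}$ and $\int f_{\bold{a}}\,d\mu=\mu(A)$, and then chooses $r$ divisible by the $a_i$, so the $f$-part contributes $\ge\mu(A)^2-\epsilon$ termwise while the $g$-part averages to $0$ because $\nu_g$ charges no rational point and $(\bold{d}_n^{(r)}\cdot\boldsymbol{\gamma})$ is u.d. off the rationals. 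You instead stay entirely on the spectral side: with $\phi_r=\lim_N\frac1N\sum e(\bold{d}_n^{(r)}\cdot\boldsymbol{\gamma})$ you use $\phi_r\equiv1$ on $Q_r$, $|\phi_r|\le1$ on the remaining rationals, $\phi_r\equiv0$ elsewhere, and let $r=s!\to\infty$ so that continuity of $\nu_f$ over $Q_{s!}\uparrow(\mathbb{Q}/\mathbb{Z})^{k+l}$ drives the bound up to $\nu_f(\{\bold{0}\})\ge\mu(A)^2$; this is in effect the paper's proof of Lemma \ref{vdc} transplanted into the dynamical setting, and it buys you independence from Theorem \ref{Hilbert space}(ii) at the cost of needing the full pointwise limit $\phi_r$ on all of $\mathbb{T}^{k+l}$ for dominated convergence. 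Note, however, that the paper needs exactly the same analytic input for its existence claims: the convergence of $\frac1N\sum e(\bold{d}_n\cdot\boldsymbol{\gamma})$ and $\frac1N\sum e(\bold{d}_n^{(r)}\cdot\boldsymbol{\gamma})$ at every rational $\boldsymbol{\gamma}$, which you flag as the main obstacle and only sketch, is carried out in the paper by splitting over residue classes $t\bmod q$, using Lemma \ref{mo}, Corollary \ref{ud along} (joint equidistribution of $((p-1)^{\beta_j}/q)_j$ along primes in a fixed class) and Dirichlet's theorem, precisely the ingredients you name; so your sketch closes in the intended way, and your handling of the Markov/density step in (i) is if anything more explicit than the paper's.
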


\begin{proof} We will prove this result for $D_1$. (The proof for $D_2$ is similar.)
For $\mathbb{Z}^{k+l}$-action $T$, there are commuting measure preserving transformations $T_1, \cdots , T_{k+l}$ such that $T^{\bold{m}} = T_1^{m_1} \cdots T_{k+l}^{m_{k+l}}$ for $\bold{m} = (m_1, m_2 , \cdots , m_{k+l})$.

First we will show that 
\begin{equation}
\label{existence1}
 \lim\limits_{N \rightarrow \infty}\frac{1}{N} \sum\limits_{n=1}^N \mu(A \cap T^{-\bold{d}_n}A)
\end{equation} 
and 
\begin{equation}
\label{existence2}
\lim\limits_{N \rightarrow \infty} \frac{1}{N}
\sum_{n=1}^N \mu(A \cap T^{-\bold{d}_n^{(r)}} A)
\end{equation}
exist. 
 
 By Theorem \ref{BH}, there exists a measure $\nu$ on $\mathbb{T}^{k+l}$ such that
$$\mu(A \cap T^{-\bold{n}}A) = \int 1_A \,\, T^{\bold{n}} 1_A \, d \mu = \int_{\mathbb{T}^{k+l}} e(\bold{n} \cdot \bold{\gamma}) \, d \nu (\bold{\gamma}).$$
Thus, in order to prove that (\ref{existence1}) and (\ref{existence2}) exist, it is sufficient to show that for every
$\gamma$, 
$$\lim\limits_{N \rightarrow \infty}
\frac{1}{N} \sum\limits_{n=1}^N e(\bold{d}_n \cdot \bold{\gamma}) \,\,\,\, \textrm{and} \,\,\,\,
 \lim\limits_{N \rightarrow \infty} \frac{1}{N}
\sum\limits_{n=1}^N e(\bold{d}_n^{(r)} \cdot \bold{\gamma})$$ exist. 
Moreover, by Lemma \ref{mo}, denoting $A_N =\{ n \leq N: \bold{d}_n \in D_1^{(r)} \} $
\begin{align*}
&\lim_{N \rightarrow \infty} \frac{1}{N} \sum_{n=1}^N e (\bold{d}_n^{(r)} \cdot \bold{\gamma})\\
&= \lim_{N \rightarrow \infty} \frac{1}{|A_N|} \sum_{n=1}^N e\left(\bold{d}_n \cdot \bold{\gamma}\right) \,\, \left( \frac{1}{r} \sum_{j_1=1}^r e\left(\frac{(p_n-1)^{\alpha_1} j_1}{r}\right) \right) \cdots \left(\frac{1}{r} \sum_{j_{k+l}=1}^r e\left(\frac{[(p_n-1)^{\beta_l}] j_{k+l}}{r}\right) \right) \\
&= \lim_{N \rightarrow \infty} \frac{N}{|A_N|} \frac{1}{r^{k+l}} \sum_{j_1 = 1}^r \cdots \sum_{j_{k+l} = 1}^{r} \frac{1}{N}
\sum_{n=1}^N e \left(\bold{d}_n \cdot (\bold{\gamma} + \left(\frac{j_1}{r} +     \cdots + \frac{j_{k+l}}{r} \right) \right).
\end{align*}
Hence we only need to show that $\lim\limits_{N \rightarrow \infty}
\frac{1}{N} \sum\limits_{n=1}^N e(\bold{d}_n \cdot \bold{\gamma})$ exists for
every $\gamma$.  From Proposition \ref{ud lemma}, if $\bold{\gamma} \notin
\mathbb{Q}^{k+l}$, $\lim\limits_{N \rightarrow \infty} \frac{1}{N}
\sum\limits_{n=1}^N e(\bold{d}_n \cdot \bold{\gamma}) = 0.$ If $\bold{\gamma} =
(\gamma_1, \gamma_2, \cdots, \gamma_{k+l}) \in \mathbb{Q}^{k+l}$, then we can
find a common denominator $q \in \mathbb{N}$ for $\gamma_1, \dots,
\gamma_{k+l}$ such that $\gamma_i = \frac{a_i}{q}$ for each $i$. 
Then
\begin{align*} 
&\lim\limits_{N \rightarrow \infty} \frac{1}{N} \sum_{n=1}^N e(\bold{d}_n \cdot
\bold{\gamma})\\
&\quad= \lim_{N \rightarrow \infty} \frac{1}{\pi(N)} \sum\limits_{p \leq N} e\left(\sum_{i=1}^k (p-1)^{\alpha_i} \frac{a_i}{q} + \sum_{j=1}^l [(p-1)^{\beta_j}] \frac{a_{k+j}}{q}\right) \\
&\quad=\lim\limits_{N \rightarrow \infty}  \frac{1}{\pi(N)} \sum\limits_{\substack{ (t, q) =1 \\ 0 \leq t \leq q-1}} \sum\limits_{\substack{ p \equiv t \,\, \bmod q \\  p \leq N}} e\left(\sum_{i=1}^k (p-1)^{\alpha_i} \frac{a_i}{q} + \sum_{j=1}^l [(p-1)^{\beta_j}] \frac{a_{k+j}}{q}\right) \\
&\quad= \sum\limits_{\substack{ (t, q) =1 \\ 0 \leq t \leq q-1}} e\left(\sum_{i=1}^k (t-1)^{\alpha_i} \frac{a_i}{q}\right) \lim\limits_{N \rightarrow \infty} \frac{1}{\pi(N)} \sum\limits_{\substack{ p \equiv t \,\, \bmod q \\  p \leq N}} e\left(\sum_{j=1}^l [(p-1)^{\beta_j}] \frac{a_{k+j}}{q}\right). 
\end{align*}

We claim that 
$$\lim\limits_{N \rightarrow \infty}\frac{1}{\pi(N)} \sum\limits_{\substack { p
    \equiv t \, \, \bmod q \\  p \leq N}}  e\left(\sum_{j=1}^l
  [(p-1)^{\beta_j}] \frac{a_{k+j}}{q}\right)$$ exists. Without loss of
generality we assume that all $\beta_i$ are distinct. Then the claim is a consequence of following two facts: 
\begin{enumerate}[(i)]
\item $([(p-1)^{\beta_1}] , \cdots , [ (p-1)^{\beta_l} ] )$ is u.d. in $\mathbb{Z}_q^l$ along $p \in t + q \mathbb{Z}$ for $(t,q) = 1$, since $\left( \frac{(p-1)^{\beta_1}}{q}, \cdots , \frac{(p-1)^{\beta_l}}{q} \right)$
is u.d. $\bmod 1$ in $\mathbb{T}^l$ along $p \in t + q \mathbb{Z}$ from Corollary \ref{ud along}. 
\item $\{ p \in \mathcal{P} : p \equiv t \,\, \bmod q \}$ has a density $\frac{1}{\phi(q)}$ in $\mathcal{P}$ for $(t,q) = 1$.
\end{enumerate}

Now let us show (\ref{eqn7}). Applying Theorem \ref{Hilbert space} to (unitary operators induced by) $T_1, \cdots, T_{k+l}$ we have $1_A = f + g$, where $f \in \mathcal{H}_{rat} $ and $g \in \mathcal{H}_{tot}$.
Note that $\mathcal{H}_{rat} = \overline{\bigcup_{q=1}^{\infty} \mathcal{H}_q}$, where $\mathcal{H}_q = \{  f : T_i^{q!} f = f \,\, \textrm{for} \,\, i=1,2, \dots, k+l \}$.

 For $\epsilon >0$, there exists $\bold{a} = (a_1, \cdots , a_{k+l}) \in \mathbb{Z}^{k+l} $ and $f_{\bold{a}} \in \mathcal{H}_{rat} $ such that $T^{\bold{a}} f_{\bold{a}} = f_{\bold{a}}$, $|| f_{\bold{a}} - f || < \epsilon/2 $ and $\int f_{\bold{a}} \, d \mu = \mu(A)$.

 Choose $r$ large such that $a_i | r $ for all $i$.  Note that the set of $\{ \bold{d}_n^{(r)} \}$ has positive relative density in $D_1$ .
Consider 
$$ \frac{1}{N} \sum_{n=1}^N \mu(A \cap T^{-\bold{d}_n^{(r)}}A) =   
\frac{1}{N} \sum_{n=1}^N  \int f \, T^{\bold{d}_n^{(r)}} f \, d \mu   +   \frac{1}{N} \sum_{n=1}^N  \int g \,T^{\bold{d}_n^{(r)}} g \, d \mu. $$
For $f \in \mathcal{H}_{rat}$,
\begin{eqnarray*}
\int f \, T^{\bold{d}_n^{(r)}} f \, d \mu &=& \langle f_{\bold{a}}, f_{\bold{a}} \rangle + \langle f_{\bold{a}}, T^{\bold{d}_n^{(r)}} (f- f_{\bold{a}}) \rangle + \langle f-f_{\bold{a}}, T^{\bold{d}_n^{(r)}}f \rangle \\
& \geq& \mu^2(A) - \epsilon, 
\end{eqnarray*}
Also note that $(\bold{d}_n^{(r)}\cdot \bold{\gamma)}$ is u.d $\bmod 1$ for $\bold{\gamma} \notin (\mathbb{Q}/\mathbb{Z})^{k+l}$. Hence,
$$\frac{1}{N} \sum_{n=1}^N  \int g \,T^{\bold{d}_n^{(r)}} g \, d \mu = \int \frac{1}{N} \sum_{n=1}^N e(\bold{d}_n^{(r)}\cdot \bold{\gamma} ) \, d \nu(\bold{\gamma}) \rightarrow 0,$$
since $\nu(\mathbb{Q}/\mathbb{Z})^{k+l} = 0$ due to $g \in \mathcal{H}_{tot}$.
 Then, $$\frac{1}{N} \sum_{n=1}^N \mu(A \cap T^{-\bold{d}_n^{(r)}}A) \geq \mu(A)^2 - \epsilon.$$
By Proposition \ref{positive density}, $ \{ \bold{d} \in D_1 : \mu(A \cap T^{-\bold{d}}A ) \geq \mu^2(A) - \epsilon \} $
 has positive lower relative density in $D_1$.
 
 For (\ref{eqn9}), choose $\epsilon$ small such that $\mu^2(A) - \epsilon \geq \mu^2(A)/2$. Since $(\bold{d}_n^{(r)})$ has positive relative density, say $\alpha$, we have
$$\lim_{N \rightarrow \infty} \frac{1}{N}\sum_{n=1}^N \mu(A \cap T^{-\bold{d}_n}A) \geq \alpha \lim_{N \rightarrow \infty} \frac{1}{N}\sum_{n=1}^N \mu(A \cap T^{-\bold{d}_n^{(r)}}A) \geq \frac{\alpha}{2} \mu^2(A). $$

\end{proof}

Via Furstenberg's correspondence principle, one can deduce the following corollary. (See also proof of Corollary \ref{prop}.)

\begin{Corollary}
\label{semi ergodic cor}
Let $D_1$ and $D_2$ be as in Theorem \ref{sarkozy type}.
If $E \subset \mathbb{Z}^{k+l}$ with ${d^*}(E) > 0$, then for any $\epsilon > 0$
\begin{equation*}
 \{ \bold{d} \in D_i : d^*(E \cap E - \bold{d} ) \geq d^*(E)^2 - \epsilon \} 
 \end{equation*}
 has positive lower relative density in $D_i$ for $i=1, 2$.
 Furthermore,
 $$\liminf_{N \rightarrow \infty} \frac{\left| \{ p \leq N :\left(  (p - 1)^{\alpha_1}, \cdots , (p - 1)^{\alpha_k}, [(p - 1)^{\beta_1}], \cdots , [(p - 1)^{\beta_l}]  \right) \in E - E \} \right| }{\pi(N)} > 0.$$
  $$\liminf_{N \rightarrow \infty} \frac{\left| \{ p \leq N :\left(  (p + 1)^{\alpha_1}, \cdots , (p + 1)^{\alpha_k}, [(p + 1)^{\beta_1}], \cdots , [(p + 1)^{\beta_l}]  \right) \in E - E \} \right| }{\pi(N)} > 0.$$ 
\end{Corollary}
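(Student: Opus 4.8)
The plan is to deduce this corollary from the Theorem that immediately precedes it, following the same pattern by which Corollary~\ref{prop} is deduced from Corollary~\ref{ergodic sequence}. First I would invoke the $\mathbb{Z}^{k+l}$-version of Furstenberg's correspondence principle: given $E \subset \mathbb{Z}^{k+l}$ with $d^*(E) > 0$, it produces a probability space $(X, \mathcal{B}, \mu)$, commuting invertible measure preserving transformations $T_1, \dots, T_{k+l}$, and a set $A \in \mathcal{B}$ with $\mu(A) = d^*(E)$ so that, writing $T^{\bold{n}} = T_1^{n_1} \cdots T_{k+l}^{n_{k+l}}$ for $\bold{n} = (n_1, \dots, n_{k+l})$, one has $d^*(E \cap (E - \bold{n})) \geq \mu(A \cap T^{-\bold{n}} A)$ for all $\bold{n} \in \mathbb{Z}^{k+l}$. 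All the analytic content has already been established in the preceding Theorem (and in Proposition~\ref{ud lemma} on which it rests); what remains is transcription through this correspondence.

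For the assertion on lower relative density, fix $i \in \{1,2\}$ and $\epsilon > 0$. Part (i) of the preceding Theorem, applied to the action generated by $T_1, \dots, T_{k+l}$ and to $A$, tells us that $S := \{ \bold{d} \in D_i : \mu(A \cap T^{-\bold{d}} A) \geq \mu^2(A) - \epsilon \}$ has positive lower relative density in $D_i$. But for $\bold{d} \in S$ the correspondence principle gives $d^*(E \cap (E - \bold{d})) \geq \mu(A \cap T^{-\bold{d}} A) \geq \mu^2(A) - \epsilon = d^*(E)^2 - \epsilon$, so $S \subseteq \{ \bold{d} \in D_i : d^*(E \cap (E - \bold{d})) \geq d^*(E)^2 - \epsilon \}$, and since a superset of a set of positive lower relative density again has positive lower relative density, the conclusion follows.

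For the two $\liminf$ statements it suffices to handle $D_1$, the argument for $D_2$ being identical. Writing $\bold{d}(p) = ((p-1)^{\alpha_1}, \dots, (p-1)^{\alpha_k}, [(p-1)^{\beta_1}], \dots, [(p-1)^{\beta_l}])$, so that $D_1 = (\bold{d}_n)_{n \in \mathbb{N}}$ with $\bold{d}_n = \bold{d}(p_n)$, I would observe that whenever $d^*(E \cap (E - \bold{d}(p))) > 0$ one has $\bold{d}(p) \in E - E$, while $d^*(E \cap (E - \bold{d}(p))) \leq 1$ always; hence, for every $N$,
\begin{equation*}
| \{ p \leq N : \bold{d}(p) \in E - E \} | \;\geq\; \sum_{p \leq N} d^*(E \cap (E - \bold{d}(p))) \;\geq\; \sum_{p \leq N} \mu(A \cap T^{-\bold{d}(p)} A).
\end{equation*}
Dividing by $\pi(N)$ and noting that $\tfrac{1}{\pi(N)} \sum_{p \leq N} \mu(A \cap T^{-\bold{d}(p)} A)$ is precisely $\tfrac{1}{M} \sum_{n=1}^{M} \mu(A \cap T^{-\bold{d}_n} A)$ with $M = \pi(N) \to \infty$, part (ii) of the preceding Theorem (equation~\eqref{eqn9}) yields $\liminf_{N \rightarrow \infty} \pi(N)^{-1} | \{ p \leq N : \bold{d}(p) \in E - E \} | > 0$; the statement for the sequence $(p+1)$ comes from the same argument applied to $D_2$. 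The only step requiring any care is this last bookkeeping identification of the average over primes $p \leq N$ with the average over the first $\pi(N)$ terms of the sequence $(\bold{d}_n)$ — beyond that there is no substantial obstacle, since everything needed has been proved above.
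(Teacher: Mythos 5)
Your proposal is correct and follows essentially the same route as the paper, which itself only indicates the argument ("via Furstenberg's correspondence principle; see also the proof of Corollary \ref{prop}"): transfer the problem to a measure preserving $\mathbb{Z}^{k+l}$-system with $\mu(A)=d^*(E)$ via the correspondence principle, obtain the density statement from part (i) of the preceding Theorem by the inclusion of recurrence sets, and obtain the $\liminf$ statements from the chain of inequalities of Corollary \ref{prop} together with equation \eqref{eqn9}. Your bookkeeping identification of the prime-indexed average with the average over the first $\pi(N)$ terms of $(\bold{d}_n)$ is exactly the intended reading and is valid.
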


\renewcommand{\abstractname}{Acknowledgements}
\begin{abstract}
 We would like to thank Angelo Nasca for helpful remarks on the preliminary draft of this paper.  
 \end{abstract}

\end{document}